\numberwithin{equation}{section}
\newtheorem{theorem}{Theorem}[section]
\newtheorem{lemma}[theorem]{Lemma}
\newtheorem{remark}[theorem]{Remark}
\newtheorem{definition}[theorem]{Definition}
\theoremstyle{definition}
\def\t{\widetilde}
\def\XXint#1#2#3{{\setbox0=\hbox{$#1{#2#3}{\int}$}
     \vcenter{\hbox{$#2#3$}}\kern-.5\wd0}}
\newcommand{\RN}{\mathbb{R}^2}
\newcommand{\intr}{\int_{\mathbb{R}^2}}
\newcommand{\e}{\varepsilon}
\begin{document}
\title[bubbling solutions without mass concentration]
{Existence of bubbling solutions without mass concentration}

\author{Youngae Lee}
\address{Youngae ~Lee,~Department of Mathematics Education, Teachers College, Kyungpook National University, Daegu, South Korea}
\email{youngaelee@knu.ac.kr}
\author{ Chang-shou Lin}
\address{ Chang-shou ~Lin,~Taida Institute for Mathematical Sciences, Center for Advanced Study in
Theoretical Sciences, National Taiwan University, Taipei 106, Taiwan}
\email{cslin@math.ntu.edu.tw}
\author{Wen Yang}
\address{ Wen ~Yang,~Wuhan Institute of Physics and Mathematics, Chinese Academy of Sciences, P.O. Box 71010, Wuhan 430071, P. R. China}
\email{wyang@wipm.ac.cn}

\begin{abstract}The seminal work \cite{bm} by  Brezis and Merle has been  pioneering in   studying the bubbling phenomena of the mean field equation with singular sources. When the vortex points are not collapsing, the mean field equation possesses the property of the so-called "bubbling implies mass concentration".   Recently, Lin and Tarantello  in \cite{lt} pointed out that  the  "bubbling implies mass concentration" phenomena might not hold in general if the  collapse of singularities occurs.
In this paper, we shall construct the first concrete example of non-concentrated bubbling solution of the mean field equation with collapsing singularities.
\end{abstract}

 \maketitle

\section{Introduction}Let $(M,g)$ be a compact Riemann surface with volume $1$, and $\rho>0$ be a real number. We consider the following mean field type equation:
\begin{equation}
\label{mfd}
\Delta u+\rho\left(\frac{h_*e^{u}}{\int_Mh_*e^{u}\mathrm{d}v_g}-1\right)=4\pi\sum_{q_i\in S}\alpha_{i}(\delta_{q_i}-1)\ \textrm{on} \ M,
\end{equation}
where $\Delta$ is the corresponding Laplace-Beltrami operator,  $S\subseteq M$ is a finite set of distinct points $q_i$, $\alpha_{q_i}>-1$, and $\delta_{q_i}$ is the Dirac measure at $q_i\in S$. The point $q_i\in S$ is called vortex point or singular source. Throughout this paper, we always assume $h_*>0$ and $h_*\in C^{2,\sigma}(M)$.

The equation \eqref{mfd} arises in various areas of mathematics and physics.
In conformal geometry, the equation \eqref{mfd} is related to the Nirenberg problem of finding  prescribed Gaussian curvature
if $S=\emptyset$, and the existence of a positive constant curvature metric with conic
singularities if $S\neq\emptyset$ (see  \cite{cy,t} and the references therein). Moreover, if the parameter $\rho=4\pi\sum\alpha_i$ and $M$ is a flat surface (for example, a flat torus), the equation \eqref{mfd} is an integrable system, which  is related to the classical Lame equation and the Painleve VI equation (see  \cite{clw, cklw}  for the details). The equation \eqref{mfd} is also related to the self-dual equation of the relativistic Chern-Simons-Higgs model.
For the recent developments related to \eqref{mfd}, we refer to the readers to   \cite{bamar, bmar, bt1, bclt, bm,  ck,cl1, cl2, cl3, cl4,  l1,   ly1,  m1, m2, m3, m4, nt1, nt2,t, y,y2} and references therein.

The seminal paper  \cite{bm} by  Brezis and Merle had initiated to study the blow
up behavior of solutions of \eqref{mfd}. Among others, they showed the
following "bubbling implies mass concentration" result:

\vspace{0.3cm}
\noindent {\bf{Theorem A}}. \cite{bm}  {\em  Suppose $S=\emptyset$ and $u_k$ is a sequence of blow up solutions to \eqref{mfd} with $\rho_k$. Then  there is a non-empty finite set $\mathcal{B}$ (namely, blow up set)
such that}
 \[\rho_k\frac{h_*e^{u_k}}{\int_Mh_*e^{u_k}\mathrm{d}v_g}\to\sum_{p\in \mathcal{B}}\beta_p\delta_p\ \ \textrm{as}\ \ k\to+\infty,\ \ \textit{where}\ \ \beta_p\ge 4\pi.\]
\vspace{0.3cm}

It was conjectured in \cite{bm} that if $S=\emptyset$, the  local mass $\beta_p$ at each blow up point $p\in \mathcal{B}$ satisfies $\beta_p\in 8\pi\mathbb{N}$, where $\mathbb{N}$ is the set of natural numbers.
This conjecture has been successfully
proved by Li and Shafrir  in \cite{ls}, and  Li in \cite{l}  showed that the local mass $\beta_p$ equals $8\pi$ exactly. For the equation \eqref{mfd}, $\rho\frac{h_*e^{u}}{\int_Mh_*e^{u}\mathrm{d}v_g}$ is called the \textit{mass distribution} of the solution $u$. Thus Theorem A  just says that if ${u}_k$ blows up as $k\to+\infty$, then the  mass is \textit{concentrated}. Later, Bartolucci and Tarantello in \cite{bt1} have extended Theorem A to include the case $S\neq\emptyset$. They also  proved that if a blow up point coincides with some singular point $q_i\in S$, then $\beta_{q_i}=8\pi(1+\alpha_{q_i})$ (see also \cite{bclt}).

Recently, Lin and Tarantello in \cite{lt} found a new phenomena such that if some of the vortices in \eqref{mfd} are collapsing, then a sequence of  blow up solutions  might not concentrate its mass. Indeed, they considered the following  equation:
\begin{equation}
\label{mfd_t}
\Delta \overline{u}_t+\rho\left(\frac{h_*e^{\overline{u}_t}}{\int_Mh_*e^{\overline{u}_t}\mathrm{d}v_g}-1\right)=4\pi\sum_{i=1}^2\alpha_i(\delta_{q_i(t)}-1)+ 4\pi\sum_{i=3}^N\alpha_{i}(\delta_{q_i}-1)\ \textrm{in} \ M,
\end{equation}
where  $\lim_{t\to0}q_i(t)= \mathfrak{q}\notin\{q_3, \cdots, q_N\}$, $i=1, 2$,  and $q_1(t)\neq q_2(t)$. Then  the following theorem was stated in \cite{lt}:

\vspace{0.3cm}
\noindent {\bf{Theorem B}}. \cite{lt}  {\em  Assume   $\alpha_{i}\in\mathbb{N}$  and  $\rho\in(8\pi,16\pi)$.
Suppose that  $\overline{u}_t$ is a sequence of blow up solutions of \eqref{mfd_t} as $t\to0$. Then  $\overline{u}_t\to \overline{w}$ uniformly locally in $C^2(M\setminus\{\mathfrak{q}\})$, where $\overline{w}$ satisfies   }
\begin{equation*}
\begin{aligned}
\Delta \overline{w}+(\rho-8\pi)\Big(\frac{h_*e^{\overline{w}}}{\int_Mh_*e^{\overline{w}}}-1\Big) =4\pi(\alpha_{1}+\alpha_{2}-2)(\delta_{\mathfrak{q}}-1)+4\pi\sum_{i=3}^N\alpha_{i}(\delta_{q_i}-1)\ \ \textrm{in}\ \ M.
\end{aligned}
\end{equation*}

The proof of Theorem B was sketched in \cite{lt}. For the complete proof, see \cite{llty}. In Theorem B, there is no restriction on $\alpha_1$ and $\alpha_2$, because $\rho\in(8\pi,16\pi).$ If $\rho>16\pi$, then we have to put some conditions on $\alpha_1$ and $\alpha_2$ in order to extend Theorem B. Indeed, in \cite{llty}, we generalize   Theorem B and obtain a sharp estimate of $\overline{u}_t$ under some nondegenerate conditions. To state the result in \cite{llty}, we let $G(x,p)$ be the Green's function of $-\Delta$ on $M$ satisfying
\begin{equation*}
-\Delta G(x,p)=\delta_p-1,~\int_MG=0.
\end{equation*} Throughout this paper, we  fix a point $\mathfrak{q}\in M$, and without loss of generality, we may choose a suitable coordinate centered at $\mathfrak{q}$ and \[\mathfrak{q}=0,\ \ q_1(t)=t\vec{e},\ \ q_2(t)=-t\vec{e},\ \ \textrm{ where}\ \ \vec{e}\ \ \textrm{ is a fixed unit vector in}\ \ \mathbb{S}^1.\]
To simplify our argument, we assume that
\begin{equation}
\label{simplify-assumption}
\alpha_1=\alpha_2=1.
\end{equation}
Now we consider the following equation, which is equivalent to \eqref{mfd_t}:
\begin{equation}
\label{2.1}
\left\{\begin{array}{l}
\Delta u_t+\rho\left(\frac{he^{u_t-G_t^{(2)}(x)}}{\int_Mh e^{u_t -G_t^{(2)}(x)}\mathrm{d}v_g }-1\right)=0,\\
\int_Mu_t\mathrm{d}v_g=0,\quad u_t\in C^{\infty}(M),
\end{array}\right.
\end{equation}
where
\begin{align}
\label{def_of_Gt}
&G_t^{(2)}(x)=4\pi   G(x,t\vec{e})+4\pi  G(x,-t\vec{e}),\ \ \mathrm{and}\\
h(x)=h_*(x)&\exp(-4\pi\sum_{i=3}^N\alpha_iG(x,q_i))\ge 0,\ \ h\in C^{2,\sigma}(M), \ \ h(0)>0.\label{prop_of_h}
\end{align}
We note $h(x)>0$ except at a finite set $S_0=\{q_3,\cdots,q_N\}$, where $0\not\in S_0$.
Now we can state the following result:
\medskip

\noindent {\bf Theorem C.} \cite{llty} {\em Assume $\alpha_i\in\mathbb{N},i\geq 3$, and  $\rho\notin 8\pi\mathbb{N}$. Suppose that  $u_t$ is a sequence of blow up solutions of (\ref{2.1}) as $t\to0$. Then $u_t\to w+8\pi  G(x,0)$ uniformly locally in $C^2(M\setminus\{0\})$, where $w$ satisfies
\begin{equation}
\label{thc.equationw}
\Delta w+(\rho-8\pi  )\left(\frac{he^{w}}{\int_Mhe^{w}}-1\right)=0\ \ \textrm{in}\ \ M, \ \ w\in C^2(M).
\end{equation} Furthermore, if the linearized equation of (\ref{thc.equationw}) at $w$ is non-degenerate, then for any $\tau\in(0,1)$, there is a constant $c_{\tau}>0$, independent of $t>0$, satisfying
\begin{align}
\label{thc.estimate}
\|u_t(x) -w(x)-8\pi  G(x,tp_t)\|_{C^1(M\setminus B_{2tR_0}(0))}\leq c_{\tau}t^{\tau},
\end{align}
where $tp_t$ is the maximum point of $u_t-w$ in $M$, and  $R_0>2$ is a fixed constant.}
\medskip

In Theorem C, the non-degenerate condition of $w$ is defined as follows:
\begin{definition}
A solution $w$ of \eqref{thc.equationw} is  non-degenerate if
$0$ is the unique solution of the following linearized problem:
\begin{equation}
\label{linear_pr_outside}
\left\{\begin{array}{l}
\Delta \phi+(\rho-8\pi)\frac{he^{w}}{\int_Mhe^{w}\mathrm{d}v_g}\left(\phi-\frac{\int_M he^{w}\phi \mathrm{d}v_g}{\int_Mhe^{w} \mathrm{d}v_g}\right)=0\ \ \mbox{in}\ \ M, \\
\int_M \phi \mathrm{d}v_g=0.
\end{array}\right.
\end{equation}
\end{definition}
 By the transversality theorem, we can always choose $h$ such that any solution   of \eqref{thc.equationw} is non-degenerate, i.e., the linearized equation \eqref{linear_pr_outside} admits only the trivial solution. We refer the readers to \cite[Theorem 4.1]{llwy} for the details of the proof.

We remark the estimate (\ref{thc.estimate}) which holds outside of a very tiny ball is rare in literature. Indeed, the non-degenerate assumption plays an important role in the proof of (\ref{thc.estimate}). In section 2, we shall review some estimates related to Theorem C.

In this article, inspired by Theorem B and Theorem C, we are interested in constructing a family of non-concentrated blow up solutions with the collapse of singular sources.  Our construction relies heavily on the non-degenerate assumption for (\ref{thc.equationw}).
Under the non-degenerate assumption for \eqref{thc.equationw} and the estimation established in  Theorem C, we could construct an accurate approximation solution and then succeed in obtaining the first concrete example of non-concentrated bubbling solution of \eqref{2.1} with collapsing singularities.
\begin{theorem}
\label{theorem1.4}
Let $h$ satisfies (\ref{prop_of_h}) with $\alpha_i\in\mathbb{N}$ for $i\ge 3$, and  $\rho\notin8\pi\mathbb{N}$.   Assume that  $w$ is a non-degenerate solution of \eqref{thc.equationw}. Then there is a small number $\emph{t}_0>0$ such that if $t\in(0,\emph{t}_0)$, then there is a  solution $u_t$ of \eqref{2.1} such that $u_t(x)$ blows up at $x=0$, and $u_t(x)$ converges to $w(x)+8\pi G(x,0)$ in $C^2_{\textrm{loc}}(M\setminus\{0\})$.
\end{theorem}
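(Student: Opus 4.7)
The strategy is a Lyapunov--Schmidt reduction, with the shape of the approximate solution dictated by Theorem C. Let $\lambda > 0$ and $p \in \RN$ be parameters, to be determined. In the \emph{outer} region $|x| \gg t$, I take $\tilde{w}_{p,t}(x) := w(x) + 8\pi G(x, tp)$, which near $x = tp$ behaves like $-4\log|x - tp|$ plus smooth terms. In the \emph{inner} region I rescale by $y = (x-tp)/t$; in this variable the two collapsing singularities sit at the fixed points $a_\pm(p) := \pm\vec{e} - p$, and the rescaled equation admits a one-parameter family of entire bubble solutions $V_\lambda(y; p)$ of a Liouville-type equation with weights $|y - a_+(p)|^2|y - a_-(p)|^2$ (reflecting $\alpha_1 = \alpha_2 = 1$), each carrying total mass $8\pi$ and satisfying $V_\lambda(y; p) = -4\log|y| + c(\lambda,p) + o(1)$ as $|y|\to\infty$. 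This expansion is matched to $\tilde w_{p,t}$ in an intermediate annulus $t \ll |x - tp| \ll 1$, and gluing the two pieces via a smooth cutoff yields an approximate solution $\Phi_{\lambda,p,t}$.

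\emph{Error and linear theory.} The matching at the intermediate scale, together with the local expansion of $G$ and the sharp estimate \eqref{thc.estimate} from Theorem C, produces an error $R_{\lambda,p,t} := \Delta\Phi_{\lambda,p,t} + \rho\bigl(h e^{\Phi_{\lambda,p,t} - G_t^{(2)}}/\int_M h e^{\Phi_{\lambda,p,t} - G_t^{(2)}} \mathrm{d}v_g - 1\bigr)$ of size $O(t^\tau)$ in a suitable weighted norm. Linearizing \eqref{2.1} at $\Phi_{\lambda,p,t}$ produces an operator $L_{\lambda,p,t}$ whose outer limit is precisely \eqref{linear_pr_outside}, hence invertible by the non-degeneracy of $w$, and whose inner limit on $\RN$ has a three-dimensional kernel spanned by the translation and dilation modes of $V_\lambda$. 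A blow-up/contradiction argument combining the outer invertibility with the classical nondegeneracy theory for the inner Liouville-type operator with singular sources then yields a uniform bound for $L_{\lambda,p,t}^{-1}$ on the orthogonal complement of the approximate kernel $K_{\lambda,p,t}$ (spanned by cutoffs of the three modes).

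\emph{Fixed point and finite-dimensional reduction.} Contraction mapping on $K_{\lambda,p,t}^\perp$ produces a correction $\phi_{\lambda,p,t}$ of order $O(t^\tau)$ so that $u_t := \Phi_{\lambda,p,t} + \phi_{\lambda,p,t}$ solves \eqref{2.1} modulo the three-dimensional projection onto $K_{\lambda,p,t}$. These three scalar conditions are then solved for the three parameters $(\lambda, p) \in (0, \infty) \times \RN$ via an implicit-function argument on the leading-order reduced equations, whose expansions involve $\nabla w(0)$, the regular part of the Green's function at $0$, and the factor $\rho - 8\pi$. The assumption $\rho \notin 8\pi \mathbb{N}$ ensures that this leading-order system is nondegenerate, giving $(\lambda_t, p_t) \to (\lambda_0, p_0)$ as $t\to 0$. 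The resulting $u_t$ blows up at $0$ and, by construction, satisfies $u_t \to w + 8\pi G(\cdot, 0)$ in $C^2_{\mathrm{loc}}(M \setminus \{0\})$.

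\emph{Main obstacle.} The delicate step is obtaining the uniform bound on $L_{\lambda,p,t}^{-1}$, since the problem couples two genuinely different linearized operators --- the outer operator on $M$ from \eqref{linear_pr_outside} and the inner Liouville-type operator on $\RN$ with two fixed singular sources at $a_\pm(p)$ --- across the intermediate annulus where the approximate solution is weakest. The sharp pointwise estimate \eqref{thc.estimate} from Theorem C is precisely what supplies the matching accuracy required both for the error bound on $R_{\lambda,p,t}$ and for the uniform invertibility of $L_{\lambda,p,t}$, which is why the non-degeneracy hypothesis on $w$ is indispensable.
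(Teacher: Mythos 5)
Your overall Lyapunov--Schmidt framework is the right one, and the ansatz in the outer region agrees with the paper's. However, there are two points where the proposal diverges from what actually works, and both are precisely the main technical novelties of the paper.

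\textbf{The inner scaling is wrong.} You rescale once by $y = (x-tp)/t$ and posit a fixed entire profile $V_\lambda(y;p)$ for the singular Liouville equation with weight $|y-a_+(p)|^2|y-a_-(p)|^2$, carrying mass $8\pi$. But as the paper observes (after \eqref{pohoidforlocalmass}), the once-rescaled function $v_t(y)$ of \eqref{vt} \emph{still blows up} at $y=0$ as $t \to 0$: there is no fixed entire limit at the $y$-scale. A second rescaling is required, $z = \Lambda_{t,q}(x-tq)$ with $\Lambda_{t,q} = O(t^{-2})$ (see \eqref{defo_of_Ctq}, \eqref{tilde_eta}), at which scale the weight $|y-\vec{e}|^2|y+\vec{e}|^2$ becomes essentially constant and the inner limit is the \emph{regular} Liouville equation \eqref{liouville}. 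The bubble lives at scale $t^2$, not $t$. So the approximate solution must be built from the standard bubble $v_{\mu,a}$ of \eqref{17}, not from a singular-weighted profile, and the singular sources at $\pm\vec{e}$ only enter through the smooth prefactor $H_{t,q}$ defined in \eqref{def_of_h}.

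\textbf{The dilation parameter must not be free.} You introduce a $3$-dimensional reduction over $(\lambda,p)$ with approximate kernel spanned by cutoffs of all three modes $Y_0, Y_1, Y_2$. This is the classical choice in concentrating singular perturbation problems (\cite{egp,f}), but the paper explicitly shows it fails here: because the total mass $\rho$ stays bounded and the bubble must carry exactly $8\pi$ of it, the height $\lambda_{t,q}$ is \emph{determined} by $q$ and $t$ via \eqref{def_of_lambda} and cannot be chosen freely. Correspondingly, $Y_0$ must be excluded from the orthogonality conditions (Definition \ref{D3}), the reduction is only $2$-dimensional over $q$, and the invertibility proof has to separately rule out a $Y_0$-component in the scaled limit of the error (this is the role of Lemma \ref{vanish_insidew}-(i), using the test function $\eta_2$). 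Heuristically, $Y_0$ does not vanish at infinity, so it couples to the non-concentrated outer mass; your blow-up/contradiction argument for uniform invertibility would not close if $Y_0$ were included.

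\textbf{Minor point.} You invoke the a~priori estimate \eqref{thc.estimate} from Theorem~C to control the error $R_{\lambda,p,t}$ and the linearized operator. Theorem~C is a statement about actual blow-up solutions of \eqref{2.1}, not about arbitrary glued approximations; it serves in the paper only as a \emph{guide} to the shape of the ansatz (Remark \ref{derive_app}). The error bounds must be, and in the paper are, derived directly from the explicit definition of $U_{t,q}$ (Lemma \ref{basic_est_for_app}).
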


The construction of the bubbling solution without mass concentration is completely different from the previous ones  \cite{cl2, egp, emp, f, ly1}. Indeed,  the equation \eqref{2.1} can not be reduced to a singular perturbation problem:
\[\Delta u+ \varepsilon h e^{u}=0,\quad \ 0<\e\ll1,\]
which was treated by \cite{egp, emp, f} and \cite{ly1}, because the total mass of (\ref{2.1}) remains bounded, i.e.,
\[\lim_{t\to0}\int_Mhe^{u_t-G_t^{(2)}}\mathrm{d}v_g\neq+\infty.\]
Our construction is inspired by the ideas in \cite{cl2}. However, it is more complicate than the one treated in \cite{cl2} due to non-concentration of the mass distribution of $u_t$.

We remark that a related phenomena was also studied  by  D'Aprile,   Pistoia,  and Ruiz in \cite{dpr}, where the authors  proved  the existence of   solution for $2\times 2$ Toda system such that both components blow up at the same point, and only  one component has mass concentration, but the other one does not. However, their construction requires certain symmetry condition. We believe that our method in this paper might be able to construct such kind of solutions without symmetry condition. We will discuss later in a forthcoming project.

To prove Theorem \ref{theorem1.4}, at first, we find a suitable approximate solution by using the estimations in section \ref{sec_sharp}. After that, we have to prove the invertibility of the linearized operator $\mathbb{Q}_{t,q}\mathbb{L}_{t,q}$, which is the one of most important parts in this paper (see section \ref{sec_linear} for the definition of $\mathbb{Q}_{t,q}\mathbb{L}_{t,q}$). The most crucial step is to find the orthogonality condition for the linearized operator  $\mathbb{L}_{t,q}$ (see  $E_{\alpha,t,q,p}$ and  $F_{\alpha,t,q,p}$ in    Definition \ref{D3}). To do it,  we note that the blow up phenomena in Theorem C does require a double scaling (see \cite{llty} or section \ref{sec_sharp} below). In a very tiny ball $B_{tR_0}(tp_t)$, the second time re-scaled one from the original solution of \eqref{2.1} becomes a perturbation of an entire solution $v$ of Liouville equation:
\begin{equation}
\label{liouville}
\Delta v+ e^{v}=0\quad\textrm{in}\ \mathbb{R}^2.
\end{equation}
It is well known that  solutions to  \eqref{liouville} have been
completely classified by Prajapat and Tarantello in \cite{PT} such that%
\begin{equation}
v\left( z\right)=v_{\mu,a}(z) = \ln \frac{8e^{\mu}}{ ( 1+e^{\mu}  \vert z +a  \vert ^{2} ) ^{2}},  \label{17}
\end{equation}
where $\mu  \in \mathbb{R}$,  $  a =(a_1,a_2) \in \mathbb{R}^2$ can be arbitrary. So the equation \eqref{liouville} has invariance under dilations and translations.
The linearized operator $L$ for $v_{0,0}$  is defined by
\begin{equation}\label{entirelinear}L\phi:=\Delta \phi+\frac{8}{(1+|z|^2)^2}\phi\quad\textrm{in}\ \mathbb{R}^2.
\end{equation}
In \cite[Proposition 1]{bp}, it has been known that there are three kernels $Y_0$, $Y_1$, $Y_2$ for the linearized  operator $L$, where \begin{equation}
\begin{aligned}\left\{ \begin{array}{ll}\label{z01}Y_0(z):= \frac{1-|z|^2}{1+ |z|^2}=-1+\frac{2}{1+|z|^2}=\frac{\partial v_{\mu,a}}{\partial \mu}\Big|_{(\mu,a)=(0,0)},\\
Y_1(z):= \frac{z_1}{1+ |z|^2}=-\frac{1}{4}\frac{\partial v_{\mu,a}}{\partial a_1}\Big|_{(\mu,a)=(0,0)},
\\ Y_2(z):=\frac{z_2}{1+ |z|^2}=-\frac{1}{4}\frac{\partial v_{\mu,a}}{\partial a_2}\Big|_{(\mu,a)=(0,0)}.
\end{array}\right. \end{aligned}
\end{equation}
However, after a long computation, we found that due to the non-concentration of mass,  the orthogonality with $Y_0(z)$
should be not included in the finite-dimensional reduction like \cite{egp,f}. However, it causes a lot of difficulties in proving the invertibility of  the linearized equation. To overcome those difficulties, some of the  idea comes from our previous work on $SU(3)$ Chern-Simons system \cite{kll}. We consider this part as one of the main technical novelties in our paper.

Before ending the introduction, we would like to make some comments on the phenomena of the collapsing singularities. It arises naturally from
the study of the following Toda system:
\begin{equation}
\label{toda}
\left\{\begin{array}{l}
\Delta u_1+K_{11}\rho_1\Big(\frac{h_1e^{u_1}}{\int_Mh_1e^{u_1}dv_g}-1\Big)+K_{12}\rho_2\Big(\frac{h_2e^{u_2}}{\int_{M}h_2e^{u_2}dv_g}-1\Big)=0\\
\Delta u_2+K_{21}\rho_1\Big(\frac{h_1e^{u_1}}{\int_Mh_1e^{u_1}dv_g}-1\Big)+K_{22}\rho_2\Big(\frac{h_2e^{u_2}}{\int_{M}h_2e^{u_2}dv_g}-1\Big)=0
\end{array}\right.\ \mbox{in}\ M,
\end{equation}
where ${\bf{K}}=\left(\begin{array}{ll}K_{11}&K_{12}\\K_{21}&K_{22}\end{array}\right)$ is the Cartan matrix of rank two of the Lie algebra $\mathfrak{g},$ $\rho_i>0$, $h_i(x)=h_i^*(x)e^{-4\pi\sum_{p\in S_i}\alpha_{p,i}G(x,p)},$ $h_i^*>0$ in $M$,  $\alpha_{p,i}\in\mathbb{N}$, and $S_i$ is a finite set of distinct points in $M$, $i=1, 2$. 

In order to compute the topological degree for the  Toda system \eqref{toda}, we should calculate the degree jump due to blow up solutions  of Toda system. For example, let  $(u_{1k},u_{2k})$ be  a sequence of solutions of \eqref{toda} with $(\rho_{1k},\rho_{2k})\rightarrow(4\pi,\rho_2)$ satisfying $\rho_2\notin4\pi\mathbb{N}$ and $\max_{M}(u_{1k},u_{2k})\to+\infty$. In \cite[Theorem 1.7]{llyz}, it was proved that
\begin{align*}
\left\{\begin{array}{l}\rho_{1k}\frac{h_1e^{u_{1k}}}{\int_Mh_1e^{u_{1k}}}\rightarrow 4\pi\delta_Q,\   Q\in M\setminus S_1,\  \textrm{and}\\ u_{2k}\rightarrow {w}+4\pi K_{21} G(x,Q)\ \textrm{ in}\ C^{2,\alpha}_{loc}(M\setminus\{Q\}),\end{array}\right.
\end{align*}
where $({w},Q)$ is a solution of the so-called  shadow system of the Toda system:
\begin{align}
\label{shadowsystem}
\left\{\begin{array}{l}
\Delta {w}+2\rho_2\left(\frac{h_2e^{{w}+4\pi K_{21} G(x,Q)}}{\int_Mh_2e^{{w}+4\pi K_{21} G(x,Q)}}-1\right)=0,\\
\nabla\big(\log h_1e^{\frac{K_{12}}{2}{w}}\big)\mid_{x=Q}=0,~\mathrm{and}~Q\notin S_1,
\end{array}\right.
\end{align}
In \cite[Theorem 1.4]{llwy}, it was shown that  the  calculation of   the degree  contributed by  blow up solutions $(u_{1k},u_{2k})$ of  \eqref{toda} can be reduced to computing the topological degree of  \eqref{shadowsystem}. To find the a priori bound for solutions of \eqref{shadowsystem}, it is inevitable to   encounter with the difficult situation due to the phenomena of collapsing singularities. Indeed, there might be a sequence of  solutions $({w}_k,Q_k)$ of \eqref{shadowsystem} such that $Q_k\notin S_1\cup S_2$ and $Q_k\to Q_0\in S_2$. For the details, we refer to the readers to \cite{llwy,llyz}.

This paper is organized as follows.  In section \ref{sec_sharp}, we put some estimations in order to give a motivation for the construction of approximate solution. In section \ref{sec_approximiate},  we construct an approximate solution. In section  \ref{sec_linear}, we introduce some function spaces and  the linearized operator. In section \ref{sec_invert}, we prove the invertibility  of  linearized operator for the equation \eqref{2.1}. In section \ref{sec_proof}, we finally prove Theorem \ref{theorem1.4}.

\section{Preliminaries}\label{sec_sharp}
In this section, we introduce some estimations from \cite{llty} in order to illustrate the idea of constructing a suitable approximate solution.

Let $u_t$ be a sequence of blow up solutions of \eqref{2.1}, where  $w(x)+8\pi G(x,0)$ is its limit in $C^2_{\textrm{loc}}(M\setminus \{0\})$   and  $w$  satisfies \eqref{thc.equationw}.

Define the local mass $\sigma_0$ of $u_t$ at $0$ by
\begin{equation}
\label{local_mass_at_p}
\sigma_0= \lim_{r\to0}\lim_{t\to0} \frac{\int_{B_{r}(0)}\rho h e^{ {u}_t- G_t^{(2)} }\mathrm{d}v_g}{\int_{M} h e^{ {u}_t- G_t^{(2)} }\mathrm{d}v_g}.
\end{equation}
To understand the blow up phenomena of $u_t$ near $0$, we   consider  the function
\begin{equation}
\begin{aligned}
\label{vt}
{v}_t(y)= u_t(ty) -\ln\left(\int_Mhe^{u_t -  G_t^{(2)}}\mathrm{d}v_g\right)+6\ln  t.
\end{aligned}
\end{equation}
Then ${v}_t(y)$ satisfies the following equation:
\begin{equation}
\begin{aligned}
\label{twosingular}
&\Delta_y {v}_t+\rho{h}(ty)|y-\vec{e}|^2|y+\vec{e}|^2e^{-R_t^{(2)}(ty)}e^{{v}_t(y)}=\rho t^2,
\end{aligned}
\end{equation}
where
\begin{align}
\label{def_of_Rt}
R_t^{(2)}(x)=4\pi R(x,t\vec{e})+4\pi R(x,-t\vec{e}),
\end{align}
and $R(x,\zeta)=G(x,\zeta)+\frac{1}{2\pi}|x-\zeta|$ is the regular part of the Green function.

Let
\begin{equation*}
m_0:=\lim_{R\to+\infty} \lim_{t\to0}\int_{B_{ R}(0)} \rho{h}(ty)|y-\vec{e}|^2|y+\vec{e}|^2e^{-R_t^{(2)}(ty)}e^{{v}_t(y)}\mathrm{d}y.
\end{equation*}
In \cite{llyz}, the following  Pohozaev type identity was derived:
\begin{equation}
\label{pohoidforlocalmass}
\begin{aligned}
(\sigma_0-m_0)(\sigma_0+m_0)=24\pi(\sigma_0-m_0).
\end{aligned}
\end{equation}
Combined with Theorem C, we get $\sigma_0=m_0=8\pi.$ Therefore, the scaled function $v_t$ defined in (\ref{vt}) blows up only at the origin $0$ as $t\to0.$

We set
\begin{align}
\label{3.8}
\widetilde\phi_t(x)= u_t(x)-w(x)-\rho_tG(x,tp_t),
\end{align} where $\rho_t=\frac{\int_{B_{t {R_0}}(tp_t)}\rho  {h}e^{{u}_t-G^{(2)}_t}\mathrm{d}v_g}{\int_M
{h}e^{  {u}_t-G^{(2)}_t}\mathrm{d}v_g}.$
Then our estimation on $\tilde\phi_t$ is stated in the proposition below:
\medskip

\noindent{\bf Proposition 2.A.} \cite{llty} {\em Assume  $\alpha_i\in\mathbb{N}$ for $i\ge 3$, and  $\rho\notin 8\pi\mathbb{N}$. Let $u_t$ be a sequence of blow up solutions of \eqref{2.1}, where  $w(x)+8\pi G(x,0)$ is its limit in $C^2_{\textrm{loc}}(M\setminus \{0\})$. Suppose that $w$ is a non-degenerate solution of \eqref{thc.equationw}. Then for any $0<\tau<1$, there is a constant $c_{\tau}>0$,  independent of $t>0$, satisfying
\begin{align*}
\|\widetilde\phi_t(x)\|_{L^{\infty}(M\setminus B_{2t{R_0}}(tp_t))}\leq c_{\tau} t^{2\tau},\quad
\|\nabla_x\widetilde\phi_t(x)\|_{L^{\infty}(M\setminus B_{2t{R_0}}(tp_t))}\le  c_{\tau} t^{\tau}.
\end{align*}}
We have already known that the scaled function ${v}_t$ blows up only at $0$ in $\mathbb{R}^2$. To give a more description for the behavior of $v_t$ near $0$, we fix a constant  $\mathbf{r}_0\in (0,\frac{1}{2})$, and define
\begin{equation}
\begin{aligned}
\label{2.4}
\lambda_t:=\max_{y\in B_{\mathbf{r}_0}(0)} \bar{v}_t(y)= \bar{v}_t(p_t),\ \textrm{where} \ \ \bar{v}_t(y)=v_t(y)-w(ty),
\end{aligned}
\end{equation}
By using the Pohozaev identity as in \cite[ESTIMATE B]{cl1}, we can derive  the following estimation for the maximum point $p_t$ of $\bar{v}_t$:
\medskip

\noindent {\bf Proposition 2.B.} \cite{llty} {\em Suppose that the assumptions in Proposition 2.A hold. Then there is a constant $c>0$, independent of $t>0$, satisfying
\[|p_t|\le ct.\]}
From Proposition 2.A and Proposition 2.B, we get the blow up solution $u_t$ can be approximated by $w+\rho_t G(x,tp_t)$ well outside a tiny ball which is centered at $0$ and its maximum point $p_t$ is sufficiently close to $0$. Then the left issue is to understand the blow up rate $\lambda_t$. In order to get a estimation for $\lambda_t,$ we have to find out the difference between $u_t$ and the standard bubble in the tiny ball $B_{2R_0t}(tp_t)$. Following the arguments in   \cite{cl1}, we set
\begin{align}
\label{3.17}
I_t(y)=\ln \frac{e^{\lambda_t}}{(1+C_te^{\lambda_t}|y-q_t|^2)^2},
\end{align}
where
\begin{align}
\label{2.4another2}
C_t:=\frac{\rho h (tp_t)e^{w(tp_t)}|p_t-\vec{e}|^2|p_t+\vec{e}|^2e^{-R_t^{(2)}(tp_t)}}{8},
\end{align}
and $q_t$ satisfies that
\begin{align}
\label{3.18}
\nabla_yI_t(y)\Big|_{y=p_t}=-t\rho_t\nabla_xR(tp_t,x)\Big|_{x=tp_t},~|q_t-p_t|\ll1.
\end{align}
It is not difficult to see
\begin{align}
\label{3.19}
|p_t-q_t|=O(te^{-\lambda_t}), \  \textrm{and}\ |I_t(p_t)-\lambda_t|=O(t^2e^{-\lambda_t}).
\end{align}
Let the error term $\eta_t(y)$ in $B_{2R_0}(p_t)$ be defined by
\begin{align}
\label{3.20}
\eta_t(y)=\bar{v}_t(y)-I_t(y)-\rho_t(R(ty,tp_t)-R(tp_t,tp_t))\ \ \textrm{for}\ y\in B_{2R_0}(p_t).
\end{align}
Then \eqref{3.19} and \eqref{3.18} yield
\begin{align}
\label{3.21}
\eta_t(p_t)=\bar{v}_t(p_t)-I_t(p_t)=O(t^2e^{-\lambda_t})~\mathrm{and}~\nabla\eta_t(p_t)=0.
\end{align}
By performing a scaling $y=R_t^{-1}z+p_t$, we define
\begin{align}
\label{tilde_eta}
\t\eta_t(z)=\eta_t(R_t^{-1}z+p_t)~\mathrm{for}~|z|\leq 2R_tR_0, \quad R_t=C_t^{\frac12}e^{\frac{\lambda_t}{2}}.
\end{align}
Notice that $\t\eta_t(z)$ is scaled twice from the original coordinate in a neighborhood of $0\in M$. The reason for us to do this double scaling is that $\bar{v}_t(y)$ also blows up at $0$. Applying the arguments in \cite{cl1}, we get the following result:
\medskip

\noindent{\bf Proposition 2.C.}
\cite{llty} {\em Suppose that the assumptions in Proposition 2.A hold. Then for any $\e\in(0,\frac12)$, there exists a constant $C_{\e}$, independent of $t>0$ and $z\in B_{2R_tR_0}(0)$ such that
\begin{equation*}
|\t\eta_t(z)|\leq C_{\e}(t\|\widetilde\phi_t\|_*+t^{2}|\ln t|)(1+|z|)^{\e}~\mathrm{for}~|z|\leq 2R_tR_0,
\end{equation*}
where
$\|\t\phi_t\|_*=\|\t\phi_t\|_{C^1(M\setminus B_{2tR_0}(tp_t))}.$}
\medskip

Notice that
\begin{equation}
\begin{aligned}
\label{byeta}
\rho\frac{h(\zeta)e^{u_t(\zeta)-G_t^{(2)}(\zeta)}}{\int_M  he^{  u_t-G_t^{(2)}}\mathrm{d}v_g}d\zeta
=8C_te^{I_t(y)}(1+\mathcal{H}_t(y,\eta_t))dy,
\end{aligned}
\end{equation}
where $\zeta=ty$,
\begin{equation}
\label{def_of_htys}
\begin{aligned}
\mathcal{H}_t(y,s)&=\frac{{h}(ty)e^{w(ty)}|y-\vec{e}|^2|y+\vec{e}|^2e^{-R_t^{(2)}(ty)}}{  h(tp_t)e^{w(tp_t)}|p_t-\vec{e}|^2|p_t+\vec{e}|^2e^{-R_t^{(2)}(tp_t)}}
e^{s+\rho_t(R(ty,tp_t)-R(tp_t,tp_t))}-1\\
&=\frac{\rho {h}(ty)e^{w(ty)}|y-\vec{e}|^2|y+\vec{e}|^2e^{-R_t^{(2)}(ty)}}{8C_t}
e^{s+\rho_t(R(ty,tp_t)-R(tp_t,tp_t))}-1,
\end{aligned}
\end{equation}
and
\begin{equation}
\label{def_of_htys2}
\begin{aligned}
\mathcal{H}_t(y,\eta_t) =\mathcal{H}_t(y,s)\Big|_{s=\eta_t(y)}.
\end{aligned}
\end{equation}
After a straightforward computation, we can see that $\mathcal{H}_t(y,\eta_t)$ admits the following expansion,
\begin{equation}
\label{nn3.25}
\begin{aligned}
\mathcal{H}_t(y,\eta_t)&=\mathcal{H}_t(y,0)+\mathcal{H}_t(y,0)\eta_t+O(1)(|\eta_t|)\ \ \textrm{for}\ \ y\in B_{2R_0}(p_t).
\end{aligned}
\end{equation}
Together with \eqref{byeta} and Proposition 2.C, we have the following result:
\medskip

\noindent{\bf Proposition 2.D.}
\cite{llty} {\em
Suppose that  the assumptions in Proposition 2.A hold. Then   there is a constant $c>0$ which is independent of $t$ such that}
\begin{align*}
\left|\rho_t-8\pi \right|\le c(t\|\t\phi_t\|_*+t^{2}|\ln t|).
\end{align*}

From \eqref{vt}, \eqref{3.8} and \eqref{3.20}, we see that for $y\in\partial B_{2R_0}(p_t)$,
\begin{equation*}
\begin{aligned}
\eta_t(y)=~&\t\phi_t(ty)+(\rho_t-8\pi)G(ty,tp_t)+4\ln |y-q_t|-4\ln |y-p_t|\\
&+\lambda_t+2\ln C_t+2\ln  t +8\pi R(tp_t,tp_t)-\ln \int_Mhe^{u_t-G_t^{(2)}}\mathrm{d}v_g\\
&-\ln \frac{(C_te^{\lambda_t}|y-q_t|^2)^2}{(1+C_te^{\lambda_t}|y-q_t|^2)^2}+(8\pi-\rho_t)(R(ty,tp_t)-R(tp_t,tp_t)).
\end{aligned}
\end{equation*}
Combined with Proposition 2.A-2.D, we have the following result:
\medskip

\noindent{\bf Proposition 2.E.}
\cite{llty} {\em
Suppose that  the assumptions in Proposition 2.A hold. Then for any $0<\tau<1$, there is a constant $c_{\tau}>0$ which is independent of $t$ such that}
\begin{align*}
\left|\lambda_t+2\ln  t+2\ln  C_t+8\pi R(tp_t,tp_t)-\ln \left(\frac{\rho}{\rho-8\pi}\int_Mhe^w\right)\right|\le c_{\tau}t^{2\tau}.
\end{align*}

\begin{remark}\label{derive_app}
Proposition 2.A-Proposition 2.E provide us almost all the information for the construction of the blow up solutions $u_t$ of \eqref{2.1}. Precisely, by Proposition 2.A and Proposition 2.D, we need to make the approximate solution $U_{t,q}$ admit the following behavior
\begin{equation}
\label{in}
U_{t,q}(x)=w(x)+8\pi G(x,tp_t)+o(1)\ \ \textrm{in}\ \ M\setminus B_{2tR_0}(tp_t).
\end{equation}
While from Proposition 2.C and \eqref{3.19}, the approximate solution $U_{t,q}$ should satisfy
\begin{equation}
\begin{aligned}
U_{t,q}(x)=~&\ln \frac{e^{\lambda_t}}{t^6(1+\frac{C_te^{\lambda_t}}{t^2}|x-tp_t|^2)^2}+w(x)+\ln \int_Mhe^{u_t-G_t^{(2)}}\mathrm{d}v_g
\\&+8\pi(R(x,tp_t)-R(tp_t,tp_t))+o(1)\ \ \  \ \textrm{in}\ \ B_{2tR_0}(tp_t).
\label{out}
\end{aligned}
\end{equation}
Next, the blow up rate $\lambda_t$ given in Proposition 2.E could help us to well combine \eqref{in} and \eqref{out}. Finally, following the arguments in \cite{ly1}, we make some small modification on such function. Then the modified function becomes our approximate solution and it is in $C^1(M).$ See section \ref{sec_approximiate} for the exact form of the approximate solution for \eqref{2.1}.
\end{remark}
\begin{remark}
\label{orderoft}
In \cite{llty}, the estimations in this section were proved even for a general cases including $\alpha_1=\alpha_2=1$. Notice that the general cases have a slightly different order for $t$,   due  to the setting of $\alpha_1$ and $\alpha_2$.
\end{remark}

\section{Approximate solutions}\label{sec_approximiate}
Let $w$ be a non-degenerate solution of \eqref{thc.equationw}. Note that \eqref{thc.equationw} is invariant by adding a constant to the solutions. Therefore, we
may assume that \begin{equation}\label{mean0}\int_M w\mathrm{d}v_g  =0.\end{equation}
Throughout section \ref{sec_approximiate}-section \ref{sec_proof}, we use $O(1)$ to mean uniform boundedness, independent of $t>0$ and $q$, and  fix  some constants $\mathbf{r}_0\in(0,\frac{1}{2})$ and $R_0>2$.

For any $q\in B_{\mathbf{r}_0}(0)$, and $t>0$, we define
\begin{equation}
\label{def_of_h}
H_{t,q}(y):=h(ty)|y-\vec{e}|^2|y+\vec{e}|^2e^{-R_t^{(2)}(ty)+8\pi R(ty, tq)-8\pi R(tq,tq)+w(ty)-w(tq)},
\end{equation}
\begin{equation}
\begin{aligned}
\label{def_of_lambda}
\lambda_{t,q}:=-2\ln t-2\ln\left(\frac{\rho H_{t,q}(q)}{8}\right)-8\pi R(tq, tq)-w(tq)+\ln\left(\frac{\rho}{\rho-8\pi}\int_Mhe^{w}\right).
\end{aligned}
\end{equation}
To simplify our notation, we set
\begin{align}
\label{defo_of_Ctq}
C_{t,q}:=\sqrt{\frac{8}{\rho H_{t,q}(q)}},\ \ \
\Lambda_{t,q}:=C_{t,q}^{-1}t^{-1}e^{\frac{\lambda_{t,q}}{2}},\ \ \
\Gamma_{t,q}:=C_{t,q}^{-1}e^{\frac{\lambda_{t,q}}{2}}R_0.
\end{align}
By \eqref{def_of_lambda}, we note that \begin{equation}\label{mag}\Lambda_{t,q}=O(t^{-2}) \ \ \textrm{and}\ \
\Gamma_{t,q}=O(t^{-1}).\end{equation}
The function $H_{t,q}$ and $\lambda_{t,q}$ are motivated by \cite{cl2}.  Clearly, $H_{t,q}$ is related to $\mathcal{H}_t(y,0)$ (see \eqref{def_of_htys}) and $\lambda_{t,q}$ is related to the height of the bubbling solutions $\bar{v}_t$ (see \eqref{2.4} and Proposition 2.E). Using Remark \ref{derive_app} with the  modification of  \cite{ly1}, we set
\begin{equation}
\label{component_of_approximate}
u_{t,q}^*(x):=\left\{\begin{array}{l}
\ln\frac{e^{\lambda_{t,q}}}{t^6(1+\Lambda_{t,q}^2|x-tq|^2)^2}+8\pi R(x,tq)(1-\theta_{t,q})-8\pi R(tq,tq)\\
+w(x)-w(tq)+\ln\left(\frac{\rho}{\rho-8\pi}\int_Mhe^w\mathrm{d}v_g\right)\ \mbox{on}\ \ B_{t{R_0}}(tq), \\ \\
\ln\frac{e^{\lambda_{t,q}}}{t^6(1+\Gamma_{t,q}^2)^2}+8\pi \left(G(x,tq)+\frac{1}{2\pi}\ln|t{R_0}|\right)(1-\theta_{t,q})-8\pi R(tq,tq)\\
+w(x)-w(tq)+\ln\left(\frac{\rho}{\rho-8\pi}\int_Mhe^w\mathrm{d}v_g\right)\  \mbox{on}\ \ M\setminus B_{t{R_0}}(tq),
\end{array}\right.
\end{equation}
where $\theta_{t,q}$ is chosen as
\begin{equation}
\label{def_of_theta}
\theta_{t,q}:=\frac{1}{1+\Gamma_{t,q}^2}=O(t^2).
\end{equation}
Then $u_{t,q}^*\in C^1(M)$.  Now we define an approximate solution $U_{t,q}$ for \eqref{2.1} by
\begin{equation}
\label{approximate_sol}
U_{t,q}(x):=u_{t,q}^*(x)-\int_Mu_{t,q}^*\mathrm{d}v_g.
\end{equation}
At first sight, the expression of \eqref{component_of_approximate} seems complicated, but the following result will simplify the expression in \eqref{component_of_approximate} for $x\notin B_{tR_0}(tq)$.
 \begin{lemma}
\label{basic_est_for_app}
(i) $\int_M u_{t,q}^*\mathrm{d}v_g=O(t^2|\ln t|)$.

\noindent (ii) $U_{t,q}(x)=w(x)+8\pi G(x,tq)+O(t^2|\ln t|)$ on $M\setminus B_{t{R_0}}(tq)$.

\noindent (iii) \begin{equation*}
\frac{he^{U_{t,q}-G_t^{(2)}}}{\int_Mhe^{U_{t,q}-G_t^{(2)}}\mathrm{d}v_g}
=\left\{\begin{array}{l}
\frac{\frac{e^{\lambda_{t,q}}}{t^{2}}H_{t,q}\left(\frac{x}{t}\right)(1+O(t^2))}{(1+\Lambda_{t,q}^2|x-tq|^2)^2(1+\mathfrak{A}_{t,q})}\ \ \mbox{on}\ \ B_{t{R_0}}(tq),\\  \\
\left(\frac{\rho-8\pi }{\rho}\right)\frac{he^{w}}{\int_Mhe^w\mathrm{d}v_g}\\
+O(1)\left(1_{B_{\mathbf{r}_0}(0)}(x)\Big(\frac{t|q|}{|x-tq|}+ \frac{t^2}{|x-tq|^2}\Big)+t^2|\ln t|+t|q|\right)\\ \   \ \mbox{on}\ \ M\setminus B_{t{R_0}}(tq),
\end{array}\right.
\end{equation*}
where  $\mathfrak{A}_{t,q}$ is a constant satisfying $\mathfrak{A}_{t,q}=O(t|q|)+O(t^2|\ln t|)$,    $
1_{B_{\mathbf{r}_0}(0)}(x)
=1$ if $x\in B_{\mathbf{r}_0}(0)$,  and $1_{B_{\mathbf{r}_0}(0)}(x)=
0$ if $x\notin B_{\mathbf{r}_0}(0).$
 \end{lemma}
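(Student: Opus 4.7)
The plan exploits two reductions: first, since $U_{t,q}-u_{t,q}^*$ is a constant in $x$, it cancels in the ratio appearing in (iii), so one may work with $u_{t,q}^*$ throughout; second, the very choice of $\lambda_{t,q}$ in \eqref{def_of_lambda} is rigged so that the constant coefficient in the outside formula for $u_{t,q}^*$ collapses to $O(t^2|\ln t|)$.

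For part (i), I split $\int_M u_{t,q}^*\,dv_g$ over $B_{tR_0}(tq)$ and its complement. On the complement, the identities $\int_M w\,dv_g=0$ and $\int_M G(\cdot,tq)\,dv_g=0$ reduce the non-constant contribution to integrals of $w$ and $G(\cdot,tq)$ over the small ball, each of size $O(t^2|\ln t|)$ (the logarithmic singularity of $G$ providing the $|\ln t|$). The constant coefficient, upon substituting \eqref{def_of_lambda} and using $\theta_{t,q}=O(t^2)$, vanishes up to $O(t^2|\ln t|)$ by direct algebraic cancellation. Inside the ball, the change of variable $\tau=\Lambda_{t,q}|x-tq|$ reduces the leading term to a standard Liouville-bubble integral, again bounded by $O(t^2|\ln t|)$. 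Part (ii) then follows immediately: on $M\setminus B_{tR_0}(tq)$ the outside formula for $u_{t,q}^*$ combined with the cancellation of its constant part gives $u_{t,q}^*(x)=w(x)+8\pi G(x,tq)+O(t^2|\ln t|)$ (using $\theta_{t,q}G(x,tq)=O(t^2)\cdot O(|\ln t|)$ throughout this region), and subtracting $\int_M u_{t,q}^*\,dv_g$ produces $U_{t,q}$.

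For part (iii), inside $B_{tR_0}(tq)$ the decomposition $G_t^{(2)}(x)=-2\ln|x-t\vec{e}|-2\ln|x+t\vec{e}|+R_t^{(2)}(x)$ together with the definition of $H_{t,q}$ packages the regular exponentials to give
\[
he^{u_{t,q}^*-G_t^{(2)}}=\frac{e^{\lambda_{t,q}}}{t^2}\cdot\frac{H_{t,q}(x/t)}{(1+\Lambda_{t,q}^2|x-tq|^2)^2}\cdot\frac{\rho}{\rho-8\pi}\int_M he^w\cdot(1+O(t^2)),
\]
where the $O(t^2)$ factor absorbs $e^{-8\pi R(x,tq)\theta_{t,q}}-1$. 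Integrating via $z=\Lambda_{t,q}(x-tq)$, using $\int_{\mathbb{R}^2}dz/(1+|z|^2)^2=\pi$, and Taylor-expanding $H_{t,q}(q+z/(t\Lambda_{t,q}))$ about $q$ (with derivatives of $H_{t,q}$ in its $y$-variable carrying a factor of $t$) yields $\frac{8\pi}{\rho-8\pi}\int_M he^w\cdot(1+O(t^2|\ln t|))$. On the complement, part (ii) gives $u_{t,q}^*-G_t^{(2)}=w+F+O(t^2|\ln t|)$ with $F(x):=8\pi G(x,tq)-G_t^{(2)}(x)$. Taylor expansion of the logarithmic and regular parts shows that the linear-in-$t$ contributions from $\pm t\vec{e}$ cancel by antisymmetry, and one finds $|F(x)|=O(t|q|+t^2)$ for $|x|\geq\mathbf{r}_0$ and $|F(x)|=O(t|q|/|x-tq|+t^2/|x-tq|^2)+O(t|q|+t^2)$ for $|x|<\mathbf{r}_0$. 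Integrating $he^w F$ then yields $O(t|q|+t^2|\ln t|)$, so the outside contribution to the denominator is $\int_M he^w+O(t|q|+t^2|\ln t|)$, and summing with the inside contribution gives the denominator as $\frac{\rho}{\rho-8\pi}\int_M he^w\cdot(1+O(t|q|+t^2|\ln t|))$. Taking ratios produces both expressions with $\mathfrak{A}_{t,q}=O(t|q|+t^2|\ln t|)$.

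The main technical obstacle is the careful error accounting in part (iii), particularly the dipole-type expansion of $4\pi G(x,t\vec{e})+4\pi G(x,-t\vec{e})$ relative to $8\pi G(x,0)$: individually these Green's functions are logarithmically singular at the origin, but the antisymmetric combination cancels the singular pieces to leading order. The residual error then naturally splits into a far regime of size $O(t|q|+t^2)$ and a near regime producing the singular $1_{B_{\mathbf{r}_0}(0)}$-supported terms $t|q|/|x-tq|$ and $t^2/|x-tq|^2$ appearing in the final statement.
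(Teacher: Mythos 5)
Your proposal is correct and takes essentially the same route as the paper. You rewrite $u_{t,q}^*$ using the Green's-function/$w$ decomposition so that the $\lambda_{t,q}$-built constant collapses to what the paper calls $\mathfrak{B}_{t,q}=O(t^2|\ln t|)$, then split the normalizing integral into the inner bubble contribution (via the rescaling $z=\Lambda_{t,q}(x-tq)$ and a Taylor expansion of $H_{t,q}$ in which the linear-in-$z$ piece drops out by symmetry of $B_{\Gamma_{t,q}}(0)$) and the outer Green's-function comparison $8\pi G(\cdot,tq)-G_t^{(2)}$, which is precisely the paper's computation yielding $\mathfrak{A}_{t,q}=O(t|q|+t^2|\ln t|)$.
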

\begin{proof}
\textit{Step 1}. Let
\begin{align}
\label{b_tq}
\mathfrak{B}_{t,q}=-4(\ln|t{R_0}|)\theta_{t,q}+2\ln\Big(\frac{\Gamma_{t,q}^2}{1+\Gamma_{t,q}^2}\Big)=O(t^2|\ln t|).
\end{align}
By the definition of  $u_{t,q}^*$, we see that
\begin{align}
\label{wtq1}
u_{t,q}^*(x)=~&w(x)+8\pi G(x,tq) (1-\theta_{t,q})+2\ln\Big(\frac{\Gamma_{t,q}^2}{1+\Gamma_{t,q}^2}\Big)-4(\ln|t{R_0}|)\theta_{t,q}\nonumber\\
=~&w(x)+8\pi G(x,tq) (1-\theta_{t,q})+\mathfrak{B}_{t,q}\ \ \ \  \textrm{in}\ \  M\setminus B_{t{R_0}}(tq),
\end{align}
where we used \eqref{def_of_lambda}. Similarly, we also see that
\begin{equation}
\begin{aligned}
\label{wtq2}
u_{t,q}^*(x)&=w(x)+8\pi G(x,tq)(1-\theta_{t,q})\\&+2\ln\left(\frac{\Lambda_{t,q}^2|x-tq|^2}{1+\Lambda_{t,q}^2|x-tq|^2}\right)-4\ln|x-tq|\theta_{t,q}\ \ \ \ \textrm{in} \ \  B_{t{R_0}}(tq).
\end{aligned}
\end{equation}
Together with $\int_M w\mathrm{d}v_g=\int_MG(x,tq)\mathrm{d}v_g=0$, we see that
\begin{equation}
\begin{aligned}
\label{wtq3}
&\int_Mu_{t,q}^*\mathrm{d}v_g=\left(\int_{M\setminus B_{t{R_0}}(tq)} +\int_{B_{t{R_0}}(tq)}\right)u_{t,q}^*\mathrm{d}v_g\\
&=\int_{M\setminus B_{t{R_0}}(tq)} w(y)+8\pi G(x,tq) (1-\theta_{t,q}) +\mathfrak{B}_{t,q}\mathrm{d}v_g+\int_{B_{t{R_0}}(tq)} u_{t,q}^*\mathrm{d}v_g\\
&=\int_{M\setminus B_{t{R_0}}(tq)}\mathfrak{B}_{t,q}\mathrm{d}v_g+\int_{B_{t{R_0}}(tq)}u_{t,q}^*-w(y)-8\pi G(x,tq)(1-\theta_{t,q})\mathrm{d}v_g.\\
&=\mathfrak{B}_{t,q}+\int_{B_{t{R_0}}(tq)}\left(2\ln\Big(\frac{\Lambda_{t,q}^2|x-tq|^2}{1+\Lambda_{t,q}^2|x-tq|^2}\Big)
-4\theta_{t,q}\ln {|x-tq|}- \mathfrak{B}_{t,q}\right)\mathrm{d}v_g\\
&=O(t^2|\ln t|).
\end{aligned}
\end{equation}
This proves Lemma \ref{basic_est_for_app}-(i). Moreover, combined with \eqref{b_tq}-\eqref{wtq1} and \eqref{wtq3}, we get Lemma \ref{basic_est_for_app}-(ii).

\medskip
\noindent\textit{Step 2.} From \eqref{wtq1}, we have
\begin{equation}
\begin{aligned}
\label{int_out}
&\int_{M\setminus B_{t{R_0}}(tq)} he^{u_{t,q}^*-G_t^{(2)}}\mathrm{d}v_g\\
&=e^{\mathfrak{B}_{t,q}}\int_{M\setminus B_{t{R_0}}(tq)}he^{w+8\pi G(x,tq)(1-\theta_{t,q})-4\pi G(x,-t\vec{e})-4\pi G(x,t\vec{e})}\mathrm{d}v_g\\
&=\int_{M\setminus B_{t{R_0}}(tq)}he^w
\left(1+O(1)\Big(1_{B_{\mathbf{r}_0}(0)}\Big(\frac{t|q|}{|x-tq|}+ \frac{t^2}{|x-tq|^2}\Big)+t^2|\ln t|+t|q|\Big)\right)\mathrm{d}v_g\\
&=\int_{M}h(x)e^{ w(x)}\mathrm{d}v_g+ O(1)\left(t|q| +t^2|\ln t|\right).
\end{aligned}
\end{equation}
Using \eqref{component_of_approximate}, we get
\begin{equation}
\begin{aligned}
\label{int_inside}
&(\rho-8\pi)\int_{B_{t{R_0}}(tq)} h(x)e^{u_{t,q}^*-G_t^{(2)}(x)}\mathrm{d}v_g\\
&=\rho\int_Mhe^w\mathrm{d}v_g
\int_{B_{\Gamma_{t,q}}(0)}\frac{8H_{t,q}(C_{t,q}e^{-\frac{\lambda_{t,q}}{2}}z+q) e^{-8\pi R(\Lambda_{t,q}^{-1}z+tq,tq)\theta_{t,q}}}{\rho H_{t,q}(q)(1+|z|^2)^2}\mathrm{d}z\\
&=\int_Mhe^w\mathrm{d}v_g\int_{B_{\Gamma_{t,q}}(0)}
\frac{8 (1+\frac{\nabla H_{t,q}(q)}{H_{t,q}(q)}\cdot (C_{t,q}e^{-\frac{\lambda_{t,q}}{2}}z)+O(1)(e^{-\lambda_{t,q}}|z|^2+t^2))}{(1+|z|^2)^2}\mathrm{d}z\\
&=8\pi\int_Mhe^w\mathrm{d}v_g+O(t^2|\ln t|).
\end{aligned}
\end{equation}
From \eqref{int_out}-\eqref{int_inside}, we obtain
\begin{equation}
\begin{aligned}
\label{int_total}
\int_{M} h(x)e^{u_{t,q}^*-G_t^{(2)}(x)}\mathrm{d}v_g=\left(\frac{\rho}{\rho-8\pi}\int_Mhe^w\mathrm{d}v_g\right)(1+\mathfrak{A}_{t,q}),
\end{aligned}
\end{equation}
where $\mathfrak{A}_{t,q}=O(1)(t|q|+t^2|\ln t|)$ is a constant.

Note that $$\frac{h(x)e^{U_{t,q}(x)-G_t^{(2)}(x)}}{\int_Mhe^{U_{t,q}-G_t^{(2)}}\mathrm{d}v_g}
=\frac{h(x)e^{u_{t,q}^*(x)-G_t^{(2)}(x)}}{\int_Mhe^{u_{t,q}^*-G_t^{(2)}}\mathrm{d}v_g}.$$
Together with the definition of $u_{t,q}^*$, \eqref{wtq1}, and \eqref{int_total}, we get the left conclusion of Lemma \ref{basic_est_for_app} and it finishes the proof.
\end{proof}

\section{Linearized operator}\label{sec_linear}
\subsection{Function spaces}
\label{linearizedoperator}
Let $\chi_{t,q}$ be the cut-off function satisfying
\begin{equation}
\label{cutoff}
0\le \chi_{t,q} \le 1,\quad |\nabla_x \chi_{t,q}(x)|=O(t^{-1}),\quad   |\nabla^2_x \chi_{t,q}(x)|=O(t^{-2}),
\end{equation}
\begin{equation}
\label{definitionofcutoff}
\chi_{t,q}(x)=\chi_{t,q}(|x-tq|)=
\left\{\begin{array}{l}
1\ \ \mbox{on}\ \ B_{\frac{t{R_0}}{2}}(tq), \\
0\ \ \mbox{on}\ \ M\setminus B_{t{R_0}}(tq),
\end{array}\right.
\end{equation}
We denote $z=(z_1,z_2)\in\mathbb{R}^2$, and set
\begin{equation}
\label{def_rho}\rho(z):=(1+|z|)^{-1-\frac{\alpha}{2}}.
\end{equation}
We recall the function $Y_i(z)$, $i=0,1,2$, defined in \eqref{z01}.
We   set for $i=1,2$,
\begin{align}
&\hat{Y}_{t,q,i}(x)=\chi_{t,q}(x)Y_i\left(\Lambda_{t,q}(x-tq)\right),\label{hatyt}\\
&Z_{t,q,i}(x):=-\Delta_x\hat{Y}_{t,q,i}(x)+\frac{8\Lambda_{t,q}^2\chi_{t,q}(x)\hat{Y}_{t,q,i}(x)}{(1+\Lambda_{t,q}^2|x-tq|^2)^2}.\label{z1}
\end{align}
In this section, we often use $x$ as the original coordinate in a neighborhood of $0\in M$ and $z=\Lambda_{t,q}(x-tq)$ as the double scaled coordinate. For convenience, we write
\begin{equation}
\label{scaledoverline}
\overline{f}(z)=f(\Lambda_{t,q}^{-1}z+tq).
\end{equation}
Using the notion above, we have
\begin{equation}
\label{definitionofcutoff2}
\overline{\chi_{t,q}}(z)=
\overline{\chi_{t,q}}(|z|)=
\left\{\begin{array}{l}
1\ \ \mbox{on}\ \ B_{\Gamma_{t,q}/2}(0),\\
0\ \ \mbox{on}\ \ \mathbb{R}^2\setminus B_{\Gamma_{t,q}}(0),
\end{array}\right.
\quad 0\le \overline{\chi_{t,q}} \le 1,
\end{equation}
and $|\nabla_z \overline{\chi_{t,q}}(z)|=O(t)$, $|\nabla^2_z \overline{\chi_{t,q}}(z)|=O(t^{2}).$
\medskip

Concerning for $Z_{t,q,i}(x)$, we have the following result.
\begin{lemma}
\label{ztqi}
(i) $\int_M Z_{t,q,i}(x)  \mathrm{d}v_g=0$ for $i=1, 2$.

(ii) For $i=1, 2$,
\begin{equation*}
\begin{aligned}
\int_M Z_{t,q,i}(x) \hat{Y}_{t,q,i}(x)\mathrm{d}v_g
&=\Lambda_{t,q}^{-2}\int_{B_{\Gamma_{t,q}}(0)}  \overline{Z_{t,q,i}}(z)\overline{\chi_{t,q}}(z) Y_i(z)\mathrm{d}z\\
&=\int_{B_{\Gamma_{t,q}}(0)}|\nabla_z(\overline{\chi_{t,q}}(z)Y_i(z))|^2+\frac{8(\overline{\chi_{t,q}}(z))^3(Y_i(z))^2}{(1+|z|^2)^2}\mathrm{d}z.
\end{aligned}
\end{equation*}

\noindent (iii) $\int_M Z_{t,q,i}(x) \hat{Y}_{t,q,j}(x)\mathrm{d}x
=\Lambda_{t,q}^{-2}\int_{B_{\Gamma_{t,q}}(0)}\overline{Z_{t,q,i}}(z)\overline{\chi_{t,q}}(z)Y_j(z)\mathrm{d}z=0$ for $i\neq j$.
\end{lemma}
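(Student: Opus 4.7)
The plan is to pass to the rescaled coordinate $z = \Lambda_{t,q}(x-tq)$, in which $\hat{Y}_{t,q,i}(x)$ becomes $\overline{\chi_{t,q}}(z) Y_i(z)$, and then combine integration by parts with the parity of the $Y_i$'s. Using $\Delta_x f = \Lambda_{t,q}^2 \Delta_z \overline{f}$, one obtains
\[
\overline{Z_{t,q,i}}(z) = -\Lambda_{t,q}^2 \Delta_z\bigl(\overline{\chi_{t,q}} Y_i\bigr) + \frac{8\,\Lambda_{t,q}^2\,\overline{\chi_{t,q}}^{\,2}\, Y_i}{(1+|z|^2)^2},
\]
while $\mathrm{d}v_g \mapsto \Lambda_{t,q}^{-2}\,\mathrm{d}z$ in the local chart centered at $0$; here the Riemannian volume form is identified with Lebesgue measure on the tiny ball $B_{tR_0}(tq)$, consistent with the double-scaling convention of Section \ref{sec_sharp}.

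For (i), I will split $Z_{t,q,i}$ into its Laplacian and potential parts. Since $\hat{Y}_{t,q,i}$ is compactly supported in $B_{tR_0}(tq)$, Stokes' theorem gives $\int_M \Delta_x \hat{Y}_{t,q,i}\,\mathrm{d}v_g = 0$. The rescaled potential part becomes a constant multiple of $\int_{B_{\Gamma_{t,q}}(0)} \overline{\chi_{t,q}}(z)^{\,2}\, Y_i(z)\,(1+|z|^2)^{-2}\,\mathrm{d}z$, whose integrand is odd in $z_i$ (since $Y_i(z) = z_i/(1+|z|^2)$ and the remaining factors are radial), so it vanishes by symmetry.

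For (ii), the first equality is the direct change of variables applied to $\int_M Z_{t,q,i} \hat{Y}_{t,q,i}\,\mathrm{d}v_g$, using $\hat{Y}_{t,q,i}(x) = \chi_{t,q}(x)\,Y_i(\Lambda_{t,q}(x-tq))$. For the second, I substitute the displayed expression for $\overline{Z_{t,q,i}}$ and integrate by parts in $z$: since $\overline{\chi_{t,q}}$ vanishes identically outside $B_{\Gamma_{t,q}}(0)$, all boundary terms drop, and the term $-\Delta_z(\overline{\chi_{t,q}} Y_i)\cdot(\overline{\chi_{t,q}} Y_i)$ converts to $|\nabla_z(\overline{\chi_{t,q}} Y_i)|^2$, producing exactly the integral in the statement.

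For (iii), the same scaling and integration by parts yield
\[
\Lambda_{t,q}^{-2}\int_{B_{\Gamma_{t,q}}(0)} \overline{Z_{t,q,i}}\,\overline{\chi_{t,q}}\, Y_j\,\mathrm{d}z = \int_{B_{\Gamma_{t,q}}(0)} \Bigl[\nabla_z(\overline{\chi_{t,q}} Y_i)\cdot\nabla_z(\overline{\chi_{t,q}} Y_j) + \frac{8\,\overline{\chi_{t,q}}^{\,3}\,Y_i Y_j}{(1+|z|^2)^2}\Bigr]\mathrm{d}z.
\]
For $(i,j)=(1,2)$ the potential integrand is odd in $z_1$ (since $Y_1 Y_2 = z_1 z_2/(1+|z|^2)^2$) against a radial weight, so it vanishes. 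For the gradient term I will use parity: $\overline{\chi_{t,q}} Y_1$ is odd in $z_1$ and even in $z_2$, while $\overline{\chi_{t,q}} Y_2$ has the opposite parities, so for both $k=1,2$ the product $\partial_{z_k}(\overline{\chi_{t,q}} Y_1)\,\partial_{z_k}(\overline{\chi_{t,q}} Y_2)$ is odd in $z_1$ and integrates to zero over the radial domain. No step presents a genuine obstacle; the only mild subtlety is the tacit identification of $\mathrm{d}v_g$ with Lebesgue measure on $B_{tR_0}(tq)$.
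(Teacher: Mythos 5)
Your proof is correct and follows essentially the same route as the paper: pass to the rescaled coordinate $z=\Lambda_{t,q}(x-tq)$, integrate by parts using the compact support of $\overline{\chi_{t,q}}$, and exploit the parity of $Y_i(z)=z_i/(1+|z|^2)$ against the radial cut-off. The only cosmetic difference is that for (i) and (iii) the paper reads off the requisite parity from the explicit expansion of $\Delta_z(\overline{\chi_{t,q}}Y_i)$ in \eqref{by}, whereas you invoke Stokes' theorem for the Laplacian part of (i) and first symmetrize via integration by parts in (iii); both reduce to the same oddness observations.
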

\begin{proof}
(i) From the definition of $Z_{t,q,i}(x)$ and $\hat{Y}_{t,q,i}(x)$, we have for  $i=1, 2$,
\begin{equation}
\begin{aligned}
\label{haty}
&\overline{Z_{t,q,i}}(z) = Z_{t,q,i}(\Lambda_{t,q}^{-1}z+tq)\\
&=-\Delta_x\hat{Y}_{t,q,i}(x)\Big|_{x=\Lambda_{t,q}^{-1}z+tq} +\frac{8\Lambda_{t,q}^2\chi_{t,q}(x)\hat{Y}_{t,q,i}(x)}{(1+\Lambda_{t,q}^2|x-tq|^2)^2}\left|_{x=\Lambda_{t,q}^{-1}z+tq}\right.\\
&=-\Delta_x \left(\chi_{t,q}(x)Y_i\Big(\Lambda_{t,q}(x-tq)\Big)\right)\Big|_{x=\Lambda_{t,q}^{-1}z+tq}\\
&\quad+\frac{8\Lambda_{t,q}^2\chi_{t,q}^2(x)Y_i\Big(\Lambda_{t,q}(x-tq)\Big)}{(1+\Lambda_{t,q}^2|x-tq|^2)^2}\left|_{x=\Lambda_{t,q}^{-1}z+tq}\right.\\
&=\Lambda_{t,q}^2\Big(-\Delta_z (\overline{\chi_{t,q}}(z)Y_i(z))+\frac{  8 \overline{\chi_{t,q}}^2(z)Y_i(z)}{(1+ |z|^2)^2}\Big).
\end{aligned}
\end{equation}
After direct computation, we have for $1
\le i\neq j\le 2$,
\begin{equation}
\label{derivofyi}
\frac{\partial Y_i(z)}{\partial z_i}=\frac{ 1+ z_j^2-z_i^2}{(1+|z|^2)^2},\ \ \frac{\partial Y_i(z)}{\partial z_j}
=\frac{ -2z_iz_j}{(1+|z|^2)^2}, \ \ \Delta Y_i(z) = -\frac{8 z_i}{(1+|z|^2)^3},
\end{equation}
and
\begin{align}
\label{by}
&\Delta_z (\overline{\chi_{t,q}}(z)Y_i(z))-\frac{8\overline{\chi_{t,q}}^2(z)Y_i(z)}{(1+ |z|^2)^2}\nonumber\\
&=(\Delta_z \overline{\chi_{t,q}}(|z|))\frac{ z_i }{(1+|z|^2) }+2\frac{d\overline{\chi_{t,q}}(|z|)}{d|z|}\Big(\frac{z_i}{|z|}\frac{(1+ z_j^2- z_i^2)}{(1+|z|^2)^2}-\frac{2z_iz_j^2 }{|z|(1+|z|^2)^2}\Big)\\
&~\quad-\frac{8\overline{\chi_{t,q}}(|z|)(1+\overline{\chi_{t,q}}(|z|))z_i }{(1+|z|^2)^3}.\nonumber
\end{align}
Combined with \eqref{haty}, we get Lemma \ref{ztqi}-(i).
\medskip

\noindent (ii) For the functions $Z_{t,q,i}(x)$ and  $\hat{Y}_{t,q,i}(x)$, we have for $1\le i,j\le 2$,
\begin{equation}
\begin{aligned}
\label{haty1}
\int_M Z_{t,q,i}(x) \hat{Y}_{t,q,j}(x)\mathrm{d}v_g
&=\int_{B_{tR_0}(tq)}Z_{t,q,i}(x)\chi_{t,q}(x)Y_j\Big(\Lambda_{t,q}(x-tq)\Big)\mathrm{d}x\\
&=\Lambda_{t,q}^{-2}\int_{B_{\Gamma_{t,q}}(0)}  \overline{Z_{t,q,i}}(z)\overline{\chi_{t,q}}(z) Y_j(z)\mathrm{d}z.
\end{aligned}
\end{equation}
Note $\overline{\chi_{t,q}}(z)\equiv0$ on $\mathbb{R}^2\setminus B_{\Gamma_{t,q}}(0)$.
Using \eqref{haty} and the integration by parts, we get
\begin{equation}
\begin{aligned}
\label{haty2}
&\int_{B_{\Gamma_{t,q}}(0)} \overline{Z_{t,q,i}}(z)\overline{\chi_{t,q}}(z) Y_i(z)\mathrm{d}z =\int_{\mathbb{R}^2} \overline{Z_{t,q,i}}(z)\overline{\chi_{t,q}}(z) Y_i(z)\mathrm{d}z\\
&=\int_{\mathbb{R}^2}\Lambda_{t,q}^2\left(-\Delta_z (\overline{\chi_{t,q}}(z)Y_i(z))  +\frac{  8 \overline{\chi_{t,q}}^2(z)Y_i(z)}{(1+ |z|^2)^2}\right)\overline{\chi_{t,q}}(z) Y_i(z)\mathrm{d}z\\
&=\Lambda_{t,q}^2\int_{B_{\Gamma_{t,q}}(0)}\left(|\nabla_z(\overline{\chi_{t,q}}(z)Y_i(z))|^2
+\frac{8(\overline{\chi_{t,q}}(z))^3(Y_i(z))^2}{(1+|z|^2)^2}\right)\mathrm{d}z.
\end{aligned}
\end{equation}
Then Lemma \ref{ztqi}-(ii) follows from \eqref{haty1} and \eqref{haty2}.
\medskip

\noindent (iii) We can obtain Lemma \ref{ztqi}-(iii) by \eqref{haty1}, \eqref{haty}, and \eqref{by}.
\end{proof}

Following \cite{cfl}, we will introduce some function spaces with different norms for the different regions $M\setminus B_{\frac{t{R_0}}{2}}(tq)$ and $B_{tR_0}(tq)$.
\begin{definition}
\label{D3}For any $0<\alpha<\frac{1}{2}$ and $p\in(1,2]$,

\noindent (a) A function $\phi(x)$ on $M$ is said to be in ${X_{\alpha,t,p}}$ if $\int_M\phi \mathrm{d}v_g=0$ and
\begin{equation*}
\begin{aligned} \|\phi \|_{X_{\alpha,t,p}}:=~&\|\Delta_z
\overline{\phi}(z)(1+|z|)^{1+\frac{\alpha}{2}}\|_{L^2(B_{\Gamma_{t,q}}(0))}+\|\overline{\phi}(z)\rho(z)\|_{L^2(B_{\Gamma_{t,q}}(0))}
\\&+\|\Delta \phi\|_{L^p(M\setminus B_{\frac{t{R_0}}{2}}(tq))}+\|\phi\|_{L^p(M\setminus B_{\frac{t{R_0}}{2}}(tq))}<+\infty,\end{aligned}
\end{equation*}
where $\Delta_z
\overline{\phi}(z)=\sum_{i=1}^2\frac{\partial^2\overline{\phi}(z)}{\partial z_i^2}$.

\noindent (b) A function $g(x)$ on $M$ is said to be in $Y_{\alpha,t,p}$ if $\int_M g\mathrm{d}v_g=0$ and
\begin{equation*}
\begin{aligned}\|g \|_{Y_{\alpha,t,p}}:&=\Big\|t^2e^{-\lambda_{t,q}}\overline{g}(z)(1+|z|)^{1+\frac{\alpha}{2}}\Big\|_{L^2(B_{\Gamma_{t,q}}(0))} +\|g\|_{L^p(M\setminus B_{\frac{t{R_0}}{2}}(tq))}<+\infty.\end{aligned}
\end{equation*}

\noindent  (c) $E_{\alpha,t,q,p}:= \left\{\phi \in {X_{\alpha,t,p}}  \ \Big|\ \int_{M} \phi(x) Z_{t,q,i}(x) \mathrm{d}v_g=0,\ i=1,2\right\}.$

\noindent  (d) $F_{\alpha,t,q,p}: =\Big\{g\in Y_{\alpha,t,p}  \ \Big|\  \int_{M} g(x) \hat{Y}_{t,q,i}(x)\mathrm{d}v_g=0,  \ i=1,2\Big\}.$
\end{definition}

Note we can get $X_{\alpha,t,p}, Y_{\alpha,t,p}\subseteq L^1(M)$ from H\"{o}lder inequality, even though $B_{\Gamma_{t,q}}(0)$ is not uniformly bounded. Furthermore, we have the following result.
\begin{lemma}
\label{bddchi2}
$\|Z_{t,q,i}\|_{Y_{\alpha,t,p}}=O(1)$.
\end{lemma}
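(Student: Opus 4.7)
The norm $\|Z_{t,q,i}\|_{Y_{\alpha,t,p}}$ consists of a weighted $L^{2}$ piece on the scaled ball $B_{\Gamma_{t,q}}(0)$ and an $L^{p}$ piece on $M\setminus B_{\frac{tR_{0}}{2}}(tq)$, together with the zero-mean requirement, the latter being exactly Lemma~\ref{ztqi}(i). The key observation is that $Z_{t,q,i}$ vanishes on $M\setminus B_{tR_{0}}(tq)$ because $\chi_{t,q}$ and $\hat Y_{t,q,i}$ both do, so the $L^{p}$ piece reduces to an integral over the thin annulus $B_{tR_{0}}(tq)\setminus B_{\frac{tR_{0}}{2}}(tq)$ of measure $O(t^{2})$.

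For the scaled $L^{2}$ piece I would start from the formula (already derived in the proof of Lemma~\ref{ztqi})
\[
\overline{Z_{t,q,i}}(z)=\Lambda_{t,q}^{2}\Bigl(-\Delta_{z}\bigl(\overline{\chi_{t,q}}\,Y_{i}\bigr)+\frac{8\,\overline{\chi_{t,q}}^{\,2}\,Y_{i}}{(1+|z|^{2})^{2}}\Bigr),
\]
which, using $\Delta Y_{i}=-\tfrac{8Y_{i}}{(1+|z|^{2})^{2}}$ from $LY_{i}=0$, expands into
\[
\overline{Z_{t,q,i}}(z)=\Lambda_{t,q}^{2}\Bigl(-Y_{i}\Delta\overline{\chi_{t,q}}-2\nabla\overline{\chi_{t,q}}\cdot\nabla Y_{i}+\frac{8\bigl(\overline{\chi_{t,q}}+\overline{\chi_{t,q}}^{\,2}\bigr)Y_{i}}{(1+|z|^{2})^{2}}\Bigr).
\]
On the inner ball $B_{\Gamma_{t,q}/2}(0)$ the first two summands vanish identically, and the crucial cancellation $t^{2}e^{-\lambda_{t,q}}\Lambda_{t,q}^{2}=C_{t,q}^{-2}=O(1)$ together with the pointwise bound $|Y_{i}(z)|/(1+|z|^{2})^{2}\lesssim(1+|z|)^{-5}$ bounds the squared weighted integrand by $(1+|z|)^{-8+\alpha}$, which is integrable over $\mathbb{R}^{2}$ since $\alpha<\tfrac12$.

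On the outer scaled annulus $B_{\Gamma_{t,q}}(0)\setminus B_{\Gamma_{t,q}/2}(0)$, where $|z|\sim\Gamma_{t,q}=O(t^{-1})$, I would insert $|Y_{i}|=O(t)$, $|\nabla Y_{i}|=O(t^{2})$, $|\nabla\overline{\chi_{t,q}}|=O(t)$, $|\Delta\overline{\chi_{t,q}}|=O(t^{2})$ to get $|\overline{Z_{t,q,i}}|\le\Lambda_{t,q}^{2}\cdot O(t^{3})=O(t^{-1})$; multiplying by $t^{2}e^{-\lambda_{t,q}}=O(t^{4})$ and the weight $(1+|z|)^{1+\alpha/2}=O(t^{-1-\alpha/2})$, then integrating against the scaled area $O(t^{-2})$, yields an $L^{2}$-norm-squared contribution of $O(t^{2-\alpha})=o(1)$. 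The same pointwise estimate $|Z_{t,q,i}(x)|=O(t^{-1})$ on the annulus in original coordinates, combined with its measure $O(t^{2})$, gives $\|Z_{t,q,i}\|_{L^{p}(M\setminus B_{\frac{tR_{0}}{2}}(tq))}=O(t^{(2-p)/p})=O(1)$ for $p\in(1,2]$. The main obstacle is bookkeeping the three conflicting scales $\Lambda_{t,q}=O(t^{-2})$, $\Gamma_{t,q}=O(t^{-1})$, $e^{-\lambda_{t,q}}=O(t^{2})$ alongside the powers of $t$ produced by derivatives of $\chi_{t,q}$; once the identity $\Lambda_{t,q}^{2}t^{2}e^{-\lambda_{t,q}}=C_{t,q}^{-2}=O(1)$ is exploited, the inner ball contributes a fixed-weight $L^{2}$ norm of a Schwartz-type expression, while the annular contribution is tamed purely by the smallness of $t$.
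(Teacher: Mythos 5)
Your proposal is correct and follows essentially the same route as the paper: both start from the scaled identity $\overline{Z_{t,q,i}}=\Lambda_{t,q}^2\bigl(-\Delta_z(\overline{\chi_{t,q}}Y_i)+8\overline{\chi_{t,q}}^2Y_i/(1+|z|^2)^2\bigr)$, invoke the cancellation $t^2e^{-\lambda_{t,q}}\Lambda_{t,q}^2=C_{t,q}^{-2}=O(1)$ (implicit in the paper's ``direct computation''), and split into the inner ball where the cut-off is flat and the thin annulus where $|\nabla\overline{\chi_{t,q}}|=O(t)$, $|\nabla^2\overline{\chi_{t,q}}|=O(t^2)$. You have simply made explicit the bookkeeping the paper leaves to the reader.
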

\begin{proof}
By \eqref{haty}, we   see that
\begin{equation}
\begin{aligned}\label{ztqibdd}
&\|Z_{t,q,i}\|_{Y_{\alpha,t,p}}=\Big\|t^2e^{-\lambda_{t,q}}\overline{Z_{t,q,i}}(z)(1+|z|)^{1+\frac{\alpha}{2}}\Big\|_{L^2(B_{\Gamma_{t,q}}(0))} +\|Z_{t,q,i}\|_{L^p(M\setminus B_{\frac{t{R_0}}{2}}(tq))}\\
&\le O(1)\Bigg(  \Big\|\Big\{-\Delta_z (\overline{\chi_{t,q}}(z)Y_i(z))  +\frac{  8 \overline{\chi_{t,q}}^2(z)Y_i(z)}{(1+ |z|^2)^2}\Big\}(1+|z|)^{1+\frac{\alpha}{2}}\Big\|_{L^2(B_{\Gamma_{t,q}}(0))}\\
&\quad+\left\|-\Delta_x \Big(\chi_{t,q}(x)Y_i\Big(\Lambda_{t,q}(x-tq)\Big)\Big)\right\|_{L^p(B_{tR_0}(tq))\setminus B_{\frac{t{R_0}}{2}}(tq))}\\
&\quad+\left\|\frac{\Lambda_{t,q}^2Y_i\Big(\Lambda_{t,q}(x-tq)\Big)}{(1+\Lambda_{t,q}^2|x-tq|^2)^2}\right\|_{L^p(B_{tR_0}(tq))\setminus B_{\frac{t{R_0}}{2}}(tq))}\Bigg).
\end{aligned}
\end{equation}
Recall that $|\nabla  \overline{\chi_{t,q}} |=O(t)$, $|\nabla^2 \overline{\chi_{t,q}} |=O(t^{2})$ and $p\in(1,2],$ we can obtain the Lemma \ref{bddchi2} from \eqref{ztqibdd} by direct computation.
\end{proof}

\subsection{Projection and Linearized operator}
We define the projection $\mathbb{Q}_{t,q}:Y_{\alpha,t,p}\rightarrow F_{\alpha,t,q,p}$ by
\[(\mathbb{Q}_{t,q}g)(x):= g(x)-\sum_{i=1}^{2}c_{i}Z_{t,q,i}(x),\ \ \textrm{ where}\  c_i\ \textrm{ is chosen so that}\ \ \mathbb{Q}_{t,q}g\in F_{\alpha,t,q,p}.\]
We shall prove that  the projection  $\mathbb{Q}_{t,q}$ is a well-defined, and bounded map.
\begin{lemma}
\label{pronorm}
(i) $\mathbb{Q}_{t,q}$ is well-defined, that is, for any $g\in  Y_{\alpha,t,p}$, there exists a unique constant $c_{g,i}$, $i=1, 2$ such that
 $g -\sum_{i=1}^{2}c_{g,i}Z_{t,q,i}  \in F_{\alpha,t,q,p}.$

(ii) $\mathbb{Q}_{t,q}$ is bounded, that is, there exists $t_0>0$  such that if $0<t<t_0$, then
\begin{equation*}
\Vert \mathbb{Q}_{t,q}g\Vert_{Y_{\alpha,t,p}}\leq c\Vert g\Vert_{Y_{\alpha,t,p}}
\end{equation*}
for some constant $c>0$, which is independent of $t\in(0,t_0)$ and $g\in Y_{\alpha,t,p}$
\end{lemma}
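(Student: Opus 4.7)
The plan is to reduce the construction of the coefficients $c_i$ to a $2\times 2$ linear system and show that it is diagonal with uniformly invertible diagonal entries. Imposing $\int_M (\mathbb{Q}_{t,q} g)\hat Y_{t,q,j}\,\mathrm{d}v_g = 0$ for $j = 1,2$ gives
\[
\sum_{i=1}^{2} c_i \int_M Z_{t,q,i}\hat Y_{t,q,j}\,\mathrm{d}v_g \;=\; \int_M g\,\hat Y_{t,q,j}\,\mathrm{d}v_g.
\]
By Lemma \ref{ztqi}-(iii) the coefficient matrix is diagonal, so the system decouples into $c_j M_j = \int_M g\,\hat Y_{t,q,j}\,\mathrm{d}v_g$ with $M_j := \int_M Z_{t,q,j}\hat Y_{t,q,j}\,\mathrm{d}v_g$. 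The whole statement then reduces to (a) a uniform positive lower bound on $M_j$, and (b) a bound on the numerator by $\|g\|_{Y_{\alpha,t,p}}$.

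For part (i), I would use Lemma \ref{ztqi}-(ii) to rewrite
\[
M_j = \int_{B_{\Gamma_{t,q}}(0)}\!\Big(|\nabla_z(\overline{\chi_{t,q}} Y_j)|^2 + \tfrac{8\,\overline{\chi_{t,q}}^{3} Y_j^2}{(1+|z|^2)^2}\Big)\mathrm{d}z.
\]
Since $\overline{\chi_{t,q}}\equiv 1$ on $B_{\Gamma_{t,q}/2}(0)$ with $\Gamma_{t,q}\to\infty$ by \eqref{mag}, and $|\nabla\overline{\chi_{t,q}}|=O(t)$ on the annulus $B_{\Gamma_{t,q}}\setminus B_{\Gamma_{t,q}/2}$, dominated convergence yields $M_j \to \int_{\mathbb{R}^2}\bigl(|\nabla Y_j|^2+8 Y_j^2/(1+|z|^2)^2\bigr)\mathrm{d}z =: c_* > 0$. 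Hence $M_j \ge c_*/2 > 0$ for all $t$ small enough, so the diagonal system is uniquely solvable.

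For part (ii), since $\hat Y_{t,q,j}$ is supported in $B_{tR_0}(tq)$, the scaling $z = \Lambda_{t,q}(x - tq)$ gives
\[
\int_M g\,\hat Y_{t,q,j}\,\mathrm{d}v_g \;=\; \Lambda_{t,q}^{-2}\int_{B_{\Gamma_{t,q}}(0)} \overline{g}(z)\overline{\chi_{t,q}}(z) Y_j(z)\,\mathrm{d}z.
\]
A weighted Cauchy--Schwarz with the pair $(1+|z|)^{\pm(1+\alpha/2)}$ bounds the right-hand side by $C_0(\alpha)\,\|\overline{g}(z)(1+|z|)^{1+\alpha/2}\|_{L^2(B_{\Gamma_{t,q}}(0))}$, where $C_0(\alpha) := \|Y_j(z)(1+|z|)^{-1-\alpha/2}\|_{L^2(\mathbb{R}^2)}$ is finite because $|Y_j(z)|\le (1+|z|)^{-1}$ and $\alpha>0$. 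Using the identity $\Lambda_{t,q}^{-2}(t^2 e^{-\lambda_{t,q}})^{-1} = C_{t,q}^2 = O(1)$ from \eqref{defo_of_Ctq}, I absorb the factor $t^2 e^{-\lambda_{t,q}}$ inside the $Y_{\alpha,t,p}$-norm to obtain $\bigl|\int_M g\,\hat Y_{t,q,j}\,\mathrm{d}v_g\bigr| \le C\|g\|_{Y_{\alpha,t,p}}$, and hence $|c_j| \le C'\|g\|_{Y_{\alpha,t,p}}$. Combining with $\|Z_{t,q,i}\|_{Y_{\alpha,t,p}} = O(1)$ from Lemma \ref{bddchi2} via the triangle inequality yields $\|\mathbb{Q}_{t,q}g\|_{Y_{\alpha,t,p}} \le C''\|g\|_{Y_{\alpha,t,p}}$.

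The main obstacle is the uniform positive lower bound on $M_j$: the domain $B_{\Gamma_{t,q}}(0)$ grows without bound as $t \to 0$ and the test functions $Y_j$ decay only like $|z|^{-1}$, so it is essential to exploit that $\overline{\chi_{t,q}}$ is identically $1$ on the large inner ball $B_{\Gamma_{t,q}/2}(0)$ and that the transition annulus contributes a vanishing error controlled by $|\nabla\overline{\chi_{t,q}}|=O(t)$. The remaining ingredients --- the diagonalization provided by Lemma \ref{ztqi}-(iii), the weighted Cauchy--Schwarz inequality on the scaled variable, and the $O(1)$ bound of Lemma \ref{bddchi2} --- then give the conclusion routinely.
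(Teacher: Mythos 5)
Your proof is correct and follows essentially the same approach as the paper: you derive the same explicit diagonal $2\times2$ system via Lemma \ref{ztqi}-(iii), solve for $c_j$, bound the numerator by weighted Cauchy--Schwarz exactly as the paper's inequality \eqref{bddchi1}, and close with Lemma \ref{bddchi2} and the triangle inequality. The only real addition is that you explicitly verify the denominator $M_j$ has a uniform positive lower bound as $t\to0$ (the paper takes this for granted); your dominated-convergence argument works, though it is simpler to note that $\overline{\chi_{t,q}}\equiv1$ on a fixed ball such as $B_1(0)$ for $t$ small, so $M_j\ge\int_{B_1(0)}\bigl(|\nabla Y_j|^2+8Y_j^2/(1+|z|^2)^2\bigr)\,\mathrm{d}z>0$ uniformly.
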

\begin{proof}
(i) For any $g\in  Y_{\alpha,t,p}$, let
\begin{equation}
\label{ciforh}
\begin{aligned}
c_{g,i}=\frac{\int_{B_{\Gamma_{t,q}}(0)}\Lambda_{t,q}^{-2}\overline{g} (z) \overline{\chi_{t,q}}(z)Y_i(z)\mathrm{d}z}{\int_{B_{\Gamma_{t,q}}(0)} \left(|\nabla_z(\overline{\chi_{t,q}}(z)Y_i(z))|^2+\frac{8(\overline{\chi_{t,q}}(z))^3(Y_i(z))^2}{(1+|z|^2)^2}\right)\mathrm{d}z},\ \   i=1,2.
\end{aligned}
\end{equation}
By   Lemma \ref{ztqi}, we get that for $i=1,2$,
\begin{equation*}
\begin{aligned}
&\int_{M}(g(x)-\sum_{k=1}^{2}c_{g,k}Z_{t,q,k}(x)) \hat{Y}_{t,q,i}(x)\mathrm{d}v_g\\
&=\int_{B_{tR_0}(tq)} g(x)\chi_{t,q}(x)Y_i(\Lambda_{t,q}(x-tq))\mathrm{d}x
-\int_{M}\Big(\sum_{k=1}^{2}c_{g,k}Z_{t,q,k}(x)\Big)\hat{Y}_{t,q,i}(x)\mathrm{d}x\\
&=\int_{B_{\Gamma_{t,q}}(0)}\Lambda_{t,q}^{-2}\overline{g} (z) \overline{\chi_{t,q}}(z)Y_i(z)\mathrm{d}z\\
&\quad-c_{g,i}\int_{B_{\Gamma_{t,q}}(0)}
\left(|\nabla_z(\overline{\chi_{t,q}}(z)Y_i(z))|^2+\frac{8(\overline{\chi_{t,q}}(z))^3(Y_i(z))^2}{(1+|z|^2)^2}\right)\mathrm{d}z.
\end{aligned}\end{equation*}
Using \eqref{ciforh}, we get
\begin{equation}
\label{ciforh2}
\int_{M}(g(x)-\sum_{k=1}^{2}c_{g,k}Z_{t,q,k}(x)) \hat{Y}_{t,q,i}(x)\mathrm{d}v_g=0,\ \  i=1,2.
\end{equation}
From Lemma \ref{ztqi} we have
$$\int_M Z_{t,q,i}  \hat{Y}_{t,q,i} \mathrm{d}v_g>0\quad\mathrm{and}\quad\int_MZ_{t,q,i}  \hat{Y}_{t,q,j} \mathrm{d}v_g=0,~i\neq j.$$
As a consequence, we can uniquely get $c_{g,i}$ such that \eqref{ciforh2} holds.  This proves Lemma \ref{pronorm}-(i).
\medskip

\noindent (ii) Using \eqref{ciforh} and H\"{o}lder inequality,  we get  that
\begin{equation}\label{bddchi1}
\begin{aligned}
|c_{g,i}|&= \frac{\left|\int_{B_{\Gamma_{t,q}}(0)}\Lambda_{t,q}^{-2}\overline{g} (z) \overline{\chi_{t,q}}(z)Y_i(z)\mathrm{d}z\right|}{\int_{B_{\Gamma_{t,q}}(0)} \left(|\nabla_z(\overline{\chi_{t,q}}(z)Y_i(z))|^2+\frac{8(\overline{\chi_{t,q}}(z))^3(Y_i(z))^2}{(1+|z|^2)^2}\right)\mathrm{d}z}\\& \le O(1)(
 \|g\|_{Y_{\alpha,t,p}}\|Y_i(z)(1+|z|)^{-1-\frac{\alpha}{2}}\|_{L^2(\mathbb{R}^2 )})\le O(1)( \|g\|_{Y_{\alpha,t,p}}).
\end{aligned}\end{equation}
By \eqref{bddchi1} and Lemma \ref{bddchi2}, we have
\begin{equation*}
\begin{aligned}
\|\mathbb{Q}_{t,q}g\|_{Y_{\alpha,t,p}}&\le\|g\|_{Y_{\alpha,t,p}}+\sum_{i=1}^2|c_{g,i}|\|Z_{t,q,i}\|_{Y_{\alpha,t,p}}  \le O(1)( \|g\|_{Y_{\alpha,t,p}}),
\end{aligned}
\end{equation*}which implies Lemma \ref{pronorm}-(ii). Thus we finish the proof.
\end{proof}

Let
\begin{equation}
\label{def_of_linearized_operator}
\mathbb{L}_{t,q}\phi:=\Delta \phi\left( x\right) +\rho\frac{h(x)e^{U_{t,q}(x)-G_t^{(2)}(x)}}{\int_Mh e^{U_{t,q} -G_t^{(2)} }\mathrm{d}v_g}\left(\phi(x)-\frac{\int_Mh e^{U_{t,q} -G_t^{(2)} }\phi \mathrm{d}v_g}{\int_Mh e^{U_{t,q} -G_t^{(2)} }\mathrm{d}v_g}\right).
\end{equation}
Then we state the main result in this section.
\begin{theorem}
\label{thma}
There exist $r_0, t_0>0$ such that if $0<t<t_0$ and  $q\in B_{r_0}(0)$, then the map
$\mathbb{Q}_{t,q}\mathbb{L}_{t,q}:E_{\alpha,t,q,p}\rightarrow  F_{\alpha,t,q,p}$
is isomorphism. Moreover, for any $(\phi,g)\in E_{\alpha,t,q,p}\times  F_{\alpha,t,q,p}$ such that
$\mathbb{Q}_{t,q}\mathbb{L}_{t,q}\phi=g,$ the following inequality holds:
\begin{equation}
\label{ineq}
\Vert \phi\Vert_{L^{\infty}(M)}+\|\phi\|_{X_{\alpha,t,p}}\leq C|\ln t|\Vert g\Vert_{Y_{\alpha,t,p}},
\end{equation}
for some constant $C>0$ independent of $(t,q,\phi,g)\in (0,t_0)\times B_{{r}_0}(0)\times
 E_{\alpha,t,q,p}\times  F_{\alpha,t,q,p}$.
\end{theorem}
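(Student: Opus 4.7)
The plan is to prove the a priori estimate \eqref{ineq} by contradiction and then upgrade it to an isomorphism by the Fredholm alternative. Suppose there exist sequences $t_n\to 0$, $q_n\in B_{r_0}(0)$, and $\phi_n\in E_{\alpha,t_n,q_n,p}$ with the normalisation $\|\phi_n\|_{L^\infty(M)}+\|\phi_n\|_{X_{\alpha,t_n,p}}=1$ but $|\ln t_n|\,\|g_n\|_{Y_{\alpha,t_n,p}}\to 0$, where $g_n:=\mathbb{Q}_{t_n,q_n}\mathbb{L}_{t_n,q_n}\phi_n$. Writing $\mathbb{L}_{t_n,q_n}\phi_n=g_n+\sum_{i=1,2}c_{n,i}Z_{t_n,q_n,i}$ and testing against $\hat{Y}_{t_n,q_n,j}$, I would combine integration by parts, the orthogonality $\int_M\phi_n Z_{t_n,q_n,i}\,\mathrm{d}v_g=0$, Lemma~\ref{ztqi}, and the fact that $\hat{Y}_{t_n,q_n,j}$ is an approximate kernel element of $\mathbb{L}_{t_n,q_n}$ to conclude that $|c_{n,i}|=o(1)$.

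Next I would perform a two-scale analysis matched to the definition of the $X_{\alpha,t,p}$-norm. In the outer region $M\setminus B_{tR_0/2}(tq)$, Lemma~\ref{basic_est_for_app}(iii) shows that the coefficient of $\phi_n$ in $\mathbb{L}_{t_n,q_n}$ converges pointwise to $(\rho-8\pi)he^w/\int_M he^w$; combined with $L^p$-elliptic theory on the outer piece of the $X$-norm, this produces a subsequential limit $\phi^*\in C^1_{\mathrm{loc}}(M\setminus\{0\})$ solving the linearized equation \eqref{linear_pr_outside} at $w$. The non-degeneracy hypothesis on $w$ forces $\phi^*\equiv 0$. In the inner region, the rescaled function $\widetilde{\phi}_n(z):=\phi_n(\Lambda_{t_n,q_n}^{-1}z+t_nq_n)$ is uniformly bounded and satisfies an equation that converges locally to $L\psi=0$ with $L$ as in \eqref{entirelinear}. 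The weighted $L^2$-bound inherent in $\|\phi_n\|_{X_{\alpha,t_n,p}}=O(1)$ gives $|\psi(z)|\le C(1+|z|)^{\alpha/2}$, so by the classification cited in \cite{bp} one has $\psi=\sum_{i=0}^{2}a_i Y_i$. Passing the orthogonality $\phi_n\perp Z_{t_n,q_n,i}$ ($i=1,2$) to the limit via Lemma~\ref{ztqi} kills $a_1$ and $a_2$.

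The technical heart of the argument is to rule out the dilation mode $a_0 Y_0$, which is \emph{not} eliminated by the orthogonality built into $E_{\alpha,t,q,p}$. To do so I would derive a Pohozaev-type identity on $B_{tR_0}(tq)$ by multiplying $\mathbb{L}_{t_n,q_n}\phi_n=g_n+\sum c_{n,i}Z_{t_n,q_n,i}$ by a carefully chosen dilation-type test function associated to $Y_0$, so that the left-hand side picks up a boundary flux plus a term encoding the deviation of $\lambda_{t_n,q_n}$ from the exact bubble height. The outer part of the identity tends to zero because $\phi^*\equiv 0$; the $g_n$ and $\sum c_{n,i}Z_{t_n,q_n,i}$ contributions are absorbed in $o(1)$; and the inner bulk term expands, via Propositions~2.D and 2.E, into a nonzero multiple of $a_0$. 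This is exactly where the non-concentration of mass enters: the correct value of $\lambda_{t,q}$ has already been pinned down in \eqref{def_of_lambda}, so the dilation direction does not create a true kernel. Hence $a_0=0$. Revisiting the outer $L^p$ and inner weighted $L^2$ pieces of the $X$-norm with $\psi\equiv\phi^*\equiv 0$ then yields $\|\phi_n\|_{L^\infty(M)}+\|\phi_n\|_{X_{\alpha,t_n,p}}=o(1)$, contradicting the normalisation and establishing \eqref{ineq}. Injectivity of $\mathbb{Q}_{t,q}\mathbb{L}_{t,q}$ follows immediately from \eqref{ineq}, and surjectivity is obtained by a standard Fredholm-alternative argument after writing $\mathbb{Q}_{t,q}\mathbb{L}_{t,q}$ as a compact perturbation of an isomorphism between $E_{\alpha,t,q,p}$ and $F_{\alpha,t,q,p}$, completing the proof.
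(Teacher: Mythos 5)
Your outer/inner blow-up scheme, the use of orthogonality to kill the translation modes $Y_1,Y_2$, the use of non-degeneracy to kill the outer limit, and the closing Fredholm-alternative step all match the paper. The gap is in how you propose to eliminate the dilation mode $a_0Y_0$. You write that the inner bulk term of a Pohozaev-type identity ``expands, via Propositions~2.D and 2.E, into a nonzero multiple of $a_0$.'' This is not what happens: Propositions~2.D and 2.E concern the true blow-up family of Theorem~C and are not invoked anywhere in the linearization argument. In the paper the relevant test function is $\eta_2(z)=\frac{4}{3}\ln(1+|z|^2)Y_0(z)+\frac{8}{3(1+|z|^2)}$, which solves $L\eta_2=16Y_0/(1+|z|^2)^2$ and hence turns the bulk into $d_0\int Y_0^2/(1+|z|^2)^2\,dz$ purely from the Liouville classification limit $\psi_t\to d_0Y_0$.

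The more serious omission is that you do not explain how the cutoff/boundary contributions produced by $\eta_2\overline{\chi_{t,q}}$ are controlled, and this is precisely where the factor $|\ln t|$ in \eqref{ineq} originates. Since $\eta_2\sim\ln|z|$ near $|z|\sim\Gamma_{t,q}\sim t^{-1}$, the integral $\int_{B_{\Gamma_{t,q}}\setminus B_{\Gamma_{t,q}/2}}\bigl(|\eta_2\Delta\overline{\chi_{t,q}}|+|\nabla\eta_2\cdot\nabla\overline{\chi_{t,q}}|\bigr)$ is of size $O(|\ln t|)$, and a mere $o(1)$ bound on $\psi_t$ there (which is all that $\phi^*\equiv0$ delivers) is not enough. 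The paper closes this by a Green's-representation comparison (Lemma~\ref{diff_of_phi_t}, relying on Lemma~\ref{estfory}), combined with the $\eta_1=-Y_0-1$ test giving $\int_{B_{\Gamma_{t,q}}}(-\Delta\psi_t)=o(|\ln t|^{-1})$ (Lemma~\ref{improved_int_limit_of_psi_t}), which together show that $\phi_t$ varies by only $o(|\ln t|^{-1})$ between the cutoff ring and a fixed reference point $tq+tR_0\vec e$. Without this quantitative matching step, the $\eta_2$ argument does not yield $a_0=0$, and the $|\ln t|$ loss in the final estimate remains unexplained. You should add this $\eta_1$-testing step and the Green's function comparison to your plan before the $\eta_2$ (dilation) identity, and remove the appeal to Propositions~2.D--2.E.
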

We will give the proof in section \ref{sec_invert}.
It is remarkable that for the linearized operator  $\mathbb{L}_{t,q}$, the orthogonality conditions shall be considered with only the approximate kernels due to translations, i.e. $Y_1$, $Y_2$. Heuristically, since the dilations part $Y_0$ does not vanish at infinity,  the  non-concentration phenomena  of mass  disturbs the element caused by dilations   to be  a good approximate kernel. It causes the main difficulty to prove Theorem \ref{thma}. This is completely different from the previous works related to concentration phenomena of mass (for example, see \cite{cl2,ly1}).

We note that the norm of $(\mathbb{Q}_{t,q}\mathbb{L}_{t,q})^{-1}$ is $O(|\ln t|)$, which   was  appeared in \cite{cfl} by Chan, Fu, Lin for the study of Chern-Simons-Higgs equation  (see also \cite{dkm,ly1}).

\section{Invertibility of Linearized operator}\label{sec_invert}
First of all, we want to show  the inequality \eqref{ineq}. We shall prove it by contradiction. Suppose that as $t\to0$, there is a sequence $q_t$,   $(\phi_t,g_{t})\in E_{\alpha,t,q_t,p}\times F_{\alpha,t,q_t,p}$ satisfying $\mathbb{Q}_{t,q_t}\mathbb{L}_{t,q_t}\phi_t=g_{t}, $ and
\begin{equation}
\label{eqto1}
\Vert \phi_t\Vert_{L^{\infty}(M)}+\|\phi_t\|_{X_{\alpha,t,p}}=1,  \quad
\Vert g_{t}\Vert_{Y_{\alpha,t,p}}=o(|\ln t|^{-1}).
\end{equation}
To simplify our notation, we write $q_t$ by $q$.

Since $\phi_t$ satisfies $\mathbb{Q}_{t,q}\mathbb{L}_{t,q}\phi_t=g_{t},$ we could find constants $c_{t,q,i}$, $i=1,2$ such that
\begin{equation}
\begin{aligned}
\label{c}
&\Delta\phi_t+\rho\frac{h(x)e^{U_{t,q}(x)-G_t^{(2)}(x)}}{\int_Mh e^{U_{t,q}-G_t^{(2)}}\mathrm{d}v_g}
\Big(\phi_t(x)-\frac{\int_Mh e^{U_{t,q} -G_t^{(2)} }\phi_t \mathrm{d}v_g}{\int_Mh e^{U_{t,q} -G_t^{(2)} }\mathrm{d}v_g}\Big)
\\&=g_{t}(x)+\sum_{i=1}^2c_{t,q,i}Z_{t,q,i}(x),
\end{aligned}
\end{equation}and
\begin{equation}
\label{intcfind}
\begin{aligned}
&\int_{M}\left[\Delta\phi_t+\rho\frac{h(x)e^{U_{t,q}(x)-G_t^{(2)}(x)}}{\int_Mh e^{U_{t,q} -G_t^{(2)}}\mathrm{d}v_g}
\left(\phi_t(x)-\frac{\int_Mh e^{U_{t,q} -G_t^{(2)} }\phi_t \mathrm{d}v_g}{\int_Mh e^{U_{t,q} -G_t^{(2)}}\mathrm{d}v_g}\right)\right]
\hat{Y}_{t,q,i}(x)\mathrm{d}v_g\\
&=\int_M\left(g_t(x)+\sum_{i=1}^2c_{t,q,i}Z_{t,q,i}(x)\right)\hat{Y}_{t,q,j}(x)\mathrm{d}v_g\ \textrm{for}\ j=1, 2.
\end{aligned}
\end{equation}
Because the proof is long and complicated, we would like to first explain the ideas behind all  computations. It is not difficult to see that $\phi_t\to \phi_0$ in $C^{0,\beta}_{\textrm{loc}}(M\setminus\{0\})$ for some $\phi_0$. Then we need to show $\phi_0\equiv0$ to derive a contradiction to \eqref{eqto1}.
To achieve our goal, we need to complete the following three steps,

\medskip
\begin{enumerate}
  \item[(i).] The (RHS) of \eqref{c} tends to zero,
  \item[(ii).] $\frac{he^{U_{t,q}-G_t^{(2)}}}{\int_Mhe^{U_{t,q}-G_t^{(2)}}\mathrm{d}v_g}\to\left(\frac{\rho-8\pi }{\rho }\right)\frac{ he^{w}}{\int_Mhe^w\mathrm{d}v_g}$ in $C^0_{\textrm{loc}}(M\setminus\{0\})$,
  \item[(iii).] $\frac{\int_Mhe^{U_{t,q} -G_t^{(2)}}\phi_t\mathrm{d}v_g}{\int_Mhe^{U_{t,q}-G_t^{(2)}}\mathrm{d}v_g}\to
\frac{\int_{M}he^{w}\phi_0\mathrm{d}v_g}{\int_Mhe^{w}\mathrm{d}v_g}$.
\end{enumerate}

%
%
%

\noindent
Once we get (i)-(iii), we can show that equation \eqref{c} converges to
\[\Delta \phi_0+(\rho-8\pi)\frac{he^{w}}{\int_M he^w\mathrm{d}v_g}\left(\phi_0-\frac{\int_Mhe^{w}\phi_0\mathrm{d}v_g}{\int_M he^w\mathrm{d}v_g}\right)=0.\]
Then from the non-degenerate condition on $w$, we can get $\phi_0\equiv0$. Among the steps (i)-(iii), (ii) was already proved (see Lemma \ref{basic_est_for_app}-(iii)) and (i) is not to difficult. However, the proof of (iii) is very difficult and complicate. To verify it, we have to prove some identities resulting from the bubbling behavior at $0$.

For the equation \eqref{c} in the tiny ball $B_{tR_0}(tq)$,  we apply the doubly scaling as in \eqref{scaledoverline} and define
\begin{equation}
\label{defo_of_psi_t}
\psi_t(z)=\overline{\phi}_t(z)-\frac{\int_Mh e^{U_{t,q} -G_t^{(2)} }\phi_t \mathrm{d}v_g}{\int_Mh e^{U_{t,q} -G_t^{(2)} }\mathrm{d}v_g}.
\end{equation}
By \eqref{defo_of_psi_t} and \eqref{eqto1}, we have
\begin{equation}
\label{bdd_of_psi_t}
\|\psi_t\|_{L^{\infty}(B_{\Gamma_{t,q}}(0))}\le 2 \|{\phi}_t\|_{L^\infty(M)} \le 2.
\end{equation}
We note that
\begin{equation}
\label{laplace_of_psi_t}
\Delta_z\psi_t(z)=\Delta_z\phi_t(\Lambda_{t,q}^{-1}z+tq)=\Lambda_{t,q}^{-2}\Big(\Delta_x\phi_t(x)\Big|_{x=\Lambda_{t,q}^{-1}z+tq }\Big).
\end{equation}
By Lemma \ref{basic_est_for_app}-(iii), we   see that $\psi_t$ satisfies
\begin{equation}
\label{eq_for_psi}
\begin{aligned}
&\Delta_z\psi_t(z)+\frac{8\psi_t(z)\frac{H_{t,q}(C_{t,q}e^{-\frac{\lambda_{t,q}}{2}}z+q)}{H_{t,q}(q)}(1+O(1)(t|q|+t^2|\ln t|))}{(1+|z|^2)^2}
\\&=\Lambda_{t,q}^{-2}\Big(\overline{g_{t}}(z)+\sum_{i=1}^2c_{t,q,i} \overline{Z_{t,q,i}}(z)\Big)\ \mbox{on}\ B_{\Gamma_{t,q}}(0).
\end{aligned}
\end{equation}
In the following lemma, we give an estimation on $c_{t,q,i}$, $i=1,2$
\begin{lemma}
\label{magofcc}
$|c_{t,q,i}|\|Z_{t,q,i}\|_{Y_{\alpha,t,p}}=O(t)$  for  $i=1,2$.
\end{lemma}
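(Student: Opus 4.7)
The plan is to test the identity \eqref{intcfind} against $\hat{Y}_{t,q,j}$ to isolate $c_{t,q,j}$, and then estimate the resulting integral by combining the orthogonality $\int_M\phi_t Z_{t,q,j}\mathrm{d}v_g=0$ (built into $E_{\alpha,t,q,p}$) with the explicit form of the weight $\mathcal{W}_{t,q}:=\rho he^{U_{t,q}-G_t^{(2)}}/\int_M he^{U_{t,q}-G_t^{(2)}}\mathrm{d}v_g$ furnished by Lemma \ref{basic_est_for_app}(iii). Since $g_t\in F_{\alpha,t,q,p}$ kills $\int_M g_t\hat{Y}_{t,q,j}\mathrm{d}v_g$ and Lemma \ref{ztqi}(iii) kills the off-diagonal pairings $\int_M Z_{t,q,i}\hat{Y}_{t,q,j}\mathrm{d}v_g$ for $i\neq j$, \eqref{intcfind} collapses to $c_{t,q,j}A_j=\int_M[\mathbb{L}_{t,q}\phi_t]\hat{Y}_{t,q,j}\mathrm{d}v_g$, where Lemma \ref{ztqi}(ii) ensures $A_j:=\int_M Z_{t,q,j}\hat{Y}_{t,q,j}\mathrm{d}v_g\to\int_{\mathbb{R}^2}(|\nabla Y_j|^2+8Y_j^2/(1+|z|^2)^2)\mathrm{d}z>0$, hence $A_j\geq c_0>0$ uniformly. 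In view of $\|Z_{t,q,j}\|_{Y_{\alpha,t,p}}=O(1)$ from Lemma \ref{bddchi2}, it suffices to prove $\bigl|\int_M[\mathbb{L}_{t,q}\phi_t]\hat{Y}_{t,q,j}\mathrm{d}v_g\bigr|=O(t)$.

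Integrating by parts in the Laplacian piece and invoking \eqref{z1} together with $\phi_t\in E_{\alpha,t,q,p}$, I obtain
\[\int_M[\mathbb{L}_{t,q}\phi_t]\hat{Y}_{t,q,j}\mathrm{d}v_g=\int_M\phi_t\Bigl[\tfrac{8\Lambda_{t,q}^2\chi_{t,q}}{(1+\Lambda_{t,q}^2|x-tq|^2)^2}+\mathcal{W}_{t,q}\Bigr]\hat{Y}_{t,q,j}\mathrm{d}v_g-\overline{\phi}_t\int_M\mathcal{W}_{t,q}\hat{Y}_{t,q,j}\mathrm{d}v_g,\]
where $\overline{\phi}_t:=\int_Mhe^{U_{t,q}-G_t^{(2)}}\phi_t\mathrm{d}v_g/\int_Mhe^{U_{t,q}-G_t^{(2)}}\mathrm{d}v_g$ satisfies $|\overline{\phi}_t|\leq\|\phi_t\|_{L^\infty(M)}\leq 1$ by \eqref{eqto1}. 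The decisive observation is that on $\mathrm{supp}\,\hat{Y}_{t,q,j}\subset B_{tR_0}(tq)$, Lemma \ref{basic_est_for_app}(iii) together with \eqref{defo_of_Ctq} gives $\mathcal{W}_{t,q}=\tfrac{8\Lambda_{t,q}^2(H_{t,q}(x/t)/H_{t,q}(q))}{(1+\Lambda_{t,q}^2|x-tq|^2)^2}(1+O(t^2|\ln t|+t|q|))$, so the bracketed coefficient equals $\tfrac{8\Lambda_{t,q}^2}{(1+\Lambda_{t,q}^2|x-tq|^2)^2}\bigl[2\chi_{t,q}+\Psi_{t,q}(x)\bigr]$ with $\Psi_{t,q}:=H_{t,q}(x/t)/H_{t,q}(q)-\chi_{t,q}+O(t^2|\ln t|+t|q|)$. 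On $B_{tR_0/2}(tq)$ where $\chi_{t,q}\equiv 1$, the $2\chi_{t,q}$-piece reproduces $Z_{t,q,j}$ exactly, so its pairing with $\phi_t$ agrees with $\int_M\phi_t Z_{t,q,j}\mathrm{d}v_g=0$ up to an annular discrepancy.

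The remaining errors are bounded after the double rescaling $z=\Lambda_{t,q}(x-tq)$. The Taylor expansion $H_{t,q}(x/t)/H_{t,q}(q)=1+C_{t,q}e^{-\lambda_{t,q}/2}\nabla H_{t,q}(q)\cdot z/H_{t,q}(q)+O(e^{-\lambda_{t,q}}|z|^2)$, combined with $C_{t,q}e^{-\lambda_{t,q}/2}=\Lambda_{t,q}^{-1}t^{-1}=O(t)$, shows that $\Psi_{t,q}$ is $O(t|z|+t^2|z|^2+t^2|\ln t|+t|q|)$ in $B_{\Gamma_{t,q}/2}(0)$; integrating this against $\overline{\chi}(z)Y_j(z)/(1+|z|^2)^2$, whose moments up to order $2$ are convergent, yields $O(t)\|\phi_t\|_{L^\infty}=O(t)$. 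The annular contribution from the $2\chi_{t,q}$-piece, namely the difference $\tfrac{16\Lambda_{t,q}^2\chi_{t,q}\hat{Y}_{t,q,j}}{(1+\Lambda_{t,q}^2|x-tq|^2)^2}-Z_{t,q,j}$, is supported on $\tfrac{tR_0}{2}\leq|x-tq|\leq tR_0$ and involves the cutoff derivatives $|\nabla\chi_{t,q}|=O(t^{-1})$, $|\Delta\chi_{t,q}|=O(t^{-2})$ from \eqref{cutoff} against factors that decay like $|z|^{-4}\sim t^4$ at $|z|\sim\Gamma_{t,q}$; together with the annular volume $O(t^2)$ in $x$-coordinates this gives $O(t)$. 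Finally, $\int_M\mathcal{W}_{t,q}\hat{Y}_{t,q,j}\mathrm{d}v_g$ is $O(t)$ since its leading radial integrand $8\overline{\chi}(z)Y_j(z)/(1+|z|^2)^2$ is odd in $z_j$ and integrates to zero, leaving only the $O(t)$ linear-in-$z$ term from the $H_{t,q}$ Taylor expansion; multiplied by $|\overline{\phi}_t|\leq 1$ this yields another $O(t)$ piece. Assembling all bounds gives $|c_{t,q,j}|=O(t)$, hence $|c_{t,q,j}|\|Z_{t,q,j}\|_{Y_{\alpha,t,p}}=O(t)$. The main obstacle is executing the factor-of-two cancellation $\chi_{t,q}+H_{t,q}(x/t)/H_{t,q}(q)=2\chi_{t,q}+\Psi_{t,q}$ cleanly at the doubly-scaled level while simultaneously tracking the annular cutoff-derivative terms and the $\mathfrak{A}_{t,q}$-denominator correction.
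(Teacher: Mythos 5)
Your argument is correct, and it does reach the same estimate $|c_{t,q,j}|=O(t)$ with $\|Z_{t,q,j}\|_{Y_{\alpha,t,p}}=O(1)$ (Lemma \ref{bddchi2}), but the internal bookkeeping is organized differently from the paper. The paper multiplies the rescaled equation \eqref{eq_for_psi} by $\overline{\chi_{t,q}}(z)Y_j(z)$, integrates by parts on $\Delta\psi_t$ to produce $\psi_t\,\Delta(\overline{\chi_{t,q}}Y_j)$, and then annihilates the leading contribution via the pointwise identity $\Delta Y_j+8Y_j/(1+|z|^2)^2=0$ (their $\mathcal{K}_{2,t}=0$); the orthogonality $\phi_t\in E_{\alpha,t,q,p}$ is never invoked, and the surviving pieces are the cutoff-derivative term $\mathcal{K}_{1,t}$ and the error term $\mathcal{K}_{3,t}$, each estimated as $O(t)$. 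You instead work in the original $M$-coordinates, test \eqref{intcfind} against $\hat{Y}_{t,q,j}$, integrate by parts on $\Delta\phi_t$ and route the main cancellation through $\int_M\phi_t Z_{t,q,j}\,\mathrm{d}v_g=0$: once in the first IBP (replacing $\Delta\hat{Y}_{t,q,j}$ by the potential piece via \eqref{z1}), and a second time through the ``$2\chi_{t,q}$-cancellation'' which matches $\frac{16\Lambda_{t,q}^2\chi_{t,q}\hat{Y}_{t,q,j}}{(1+\Lambda_{t,q}^2|x-tq|^2)^2}$ with $Z_{t,q,j}$ on the inner ball. That matching is exactly where the identity $LY_j=0$ enters, so the two proofs are algebraically equivalent, but your version uses more structure of $\phi_t$ than is strictly necessary, and the resulting bookkeeping (splitting the potential into $2\chi_{t,q}+\Psi_{t,q}$, then separately tracking the annular discrepancy, the $\Psi_{t,q}$-part, and the constant term $\overline{\phi}_t\int_M\mathcal{W}_{t,q}\hat{Y}_{t,q,j}$) is more intricate than the paper's three-term decomposition $\mathcal{K}_{1,t}+\mathcal{K}_{2,t}+\mathcal{K}_{3,t}$. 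The advantage of the paper's route is precisely that $\mathcal{K}_{2,t}$ vanishes for free by the exact kernel equation, whereas your route must re-introduce $Z_{t,q,j}$ to manufacture the same cancellation; the advantage of yours is that the computation stays in original coordinates and only Lemma \ref{basic_est_for_app}(iii) (rather than the explicit form of the scaled equation) is needed to identify the potential. Both arrive at the same $O(t)$ bound dominated by the annular cutoff-derivative contribution, and all the order-counting in your estimates ($|z|\sim\Gamma_{t,q}\sim t^{-1}$, $|Y_j|\sim t$, $(1+|z|^2)^{-2}\sim t^4$, annular $x$-area $\sim t^2$, $C_{t,q}e^{-\lambda_{t,q}/2}=O(t)$) checks out.
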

\begin{proof}
Multiplying $\overline{\chi_{t,q}}(z)Y_j(z)$ on both sides of \eqref{eq_for_psi} and using Lemma \ref{ztqi}, we have
\begin{equation*}
\begin{aligned}
&c_{t,q,j}\int_{B_{\Gamma_{t,q}}(0)} |\nabla_z(\overline{\chi_{t,q}}(z)Y_j(z))|^2+\frac{8(\overline{\chi_{t,q}}(z))^3(Y_j(z))^2}{(1+|z|^2)^2} \mathrm{d}z\\
&=\int_{B_{\Gamma_{t,q}}(0)}\Big\{\Delta\psi_t+\frac{8\psi_t(z)}{(1+|z|^2)^2}
+\frac{ O(1)(|\psi_t(z)|(e^{-\frac{\lambda_{t,q}}{2}}|z|+t|q|+t^2|\ln t|))}{(1+|z|^2)^2}\\
&\quad- \Lambda^{-2}_{t,q}\overline{g_{t}}(z)\Big\}\frac{\overline{\chi_{t,q}}(z) z_j }{(1+|z|^2)}\mathrm{d}z.
\end{aligned}
\end{equation*}
We note that $\int_{B_{\Gamma_{t,q}}(0)}\overline{g_{t}}(z)\frac{\overline{\chi_{t,q}}(z) z_j }{(1+|z|^2)}\mathrm{d}z=0$ from $g_{t}\in F_{\alpha,t,q,p}$.
After integration by parts, we have
\begin{equation*}
\begin{aligned}
&c_{t,q,j}\int_{B_{\Gamma_{t,q}}(0)} |\nabla_z(\overline{\chi_{t,q}}(z)Y_j(z))|^2+\frac{8(\overline{\chi_{t,q}}(z))^3(Y_j(z))^2}{(1+|z|^2)^2} \mathrm{d}z\\
&=\int_{B_{\Gamma_{t,q}}(0)\setminus B_{\frac{\Gamma_{t,q}}{2}}(0)}\psi_t(z)\Big\{\Delta\overline{\chi_{t,q}}(z)\frac{ z_j }{(1+|z|^2)} +2\nabla\overline{\chi_{t,q}}\cdot\nabla\Big(\frac{ z_j }{(1+|z|^2)}\Big)\Big\}\mathrm{d}z\\
&\quad+\int_{B_{\Gamma_{t,q}}(0)}\psi_t(z)\Big\{ \overline{\chi_{t,q}}(z)\Delta\Big(\frac{z_j}{1+|z|^2}\Big) +\frac{8\overline{\chi_{t,q}}(z) z_j}{(1+|z|^2)^3}
\Big\}\mathrm{d}z\\
&\quad+\int_{B_{\Gamma_{t,q}}(0)}\frac{O(1)(|\psi_t(z)||z_j|(e^{-\frac{\lambda_{t,q}}{2}}|z|+t|q|+t^2|\ln t|))}{(1+|z|^2)^3}\mathrm{d}z\\
&=\mathcal{K}_{1,t}+\mathcal{K}_{2,t}+\mathcal{K}_{3,t}.
\end{aligned}
\end{equation*}
For the right hand side of the above equation, using \eqref{bdd_of_psi_t} and \eqref{cutoff}, we get $\mathcal{K}_{1,t}=O(t)$ and $\mathcal{K}_{3,t}=O(t)$. In the end, we can show the term $\mathcal{K}_{2,t}=0$ from the equation $\Delta\Big(\frac{z_j}{1+|z|^2}\Big) +\frac{8z_j}{(1+|z|^2)^3}=0$. Together with Lemma \ref{bddchi2},  we prove   Lemma \ref{magofcc}.
\end{proof}

Next, we give a description on the asymptotic behavior of $\psi_t$ as $t\to0$.
\begin{lemma}
\label{limitfunctionofpsi_t}
There is a constant $d_0\in\mathbb{R}$ such that as ${t} \to0$,
\[\psi_t(z)\to d_0Y_0(z)\ \textrm{in}\  C^{0,\beta}_{\textrm{loc}}(\mathbb{R}^2 ).\]
\end{lemma}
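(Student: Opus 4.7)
The plan is to extract a subsequential limit of $\psi_t$ using the uniform $L^\infty$ bound \eqref{bdd_of_psi_t}, identify the limit as a bounded solution of the linearized Liouville equation, and then use the orthogonality condition $\phi_t\in E_{\alpha,t,q,p}$ to kill the translation components.

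First I would examine each term in equation \eqref{eq_for_psi}. On any compact set in $\mathbb{R}^2$, the coefficient $H_{t,q}(C_{t,q}e^{-\lambda_{t,q}/2}z+q)/H_{t,q}(q)$ tends to $1$ because $C_{t,q}e^{-\lambda_{t,q}/2}=O(t)\to 0$ by \eqref{defo_of_Ctq} and \eqref{mag}, and the corrections of size $O(t|q|+t^2|\ln t|)$ vanish. On the right-hand side, $\Lambda_{t,q}^{-2}=O(t^4)$, while $\|g_t\|_{Y_{\alpha,t,p}}=o(|\ln t|^{-1})$ from \eqref{eqto1} and $c_{t,q,i}$ is controlled by Lemma \ref{magofcc}, with $\Lambda_{t,q}^{-2}\overline{Z_{t,q,i}}$ bounded locally by \eqref{haty}. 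Hence the right-hand side of \eqref{eq_for_psi} tends to $0$ in $L^p_{\mathrm{loc}}(\mathbb{R}^2)$. Using $\|\psi_t\|_{L^\infty(B_{\Gamma_{t,q}}(0))}\leq 2$ together with standard $W^{2,p}$ elliptic estimates and a diagonal extraction, $\psi_t\to\psi_0$ in $C^{0,\beta}_{\mathrm{loc}}(\mathbb{R}^2)$ along a subsequence, where $\psi_0\in L^\infty(\mathbb{R}^2)$ satisfies the linearized Liouville equation $L\psi_0=\Delta\psi_0+\frac{8}{(1+|z|^2)^2}\psi_0=0$ in $\mathbb{R}^2$. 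By the classification in \cite{bp} recalled at \eqref{z01}, there exist constants $d_0,d_1,d_2$ such that $\psi_0=d_0Y_0+d_1Y_1+d_2Y_2$.

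It remains to show $d_1=d_2=0$. Since $\phi_t\in E_{\alpha,t,q,p}$, $\int_M\phi_tZ_{t,q,i}\,dv_g=0$ for $i=1,2$. Writing $\overline{\phi}_t=\psi_t+\mu_t$ with $\mu_t$ the constant in \eqref{defo_of_psi_t}, and using Lemma \ref{ztqi}-(i) to eliminate the $\mu_t$ contribution, a change of variables $x=\Lambda_{t,q}^{-1}z+tq$ gives
\begin{equation*}
0=\int_{B_{\Gamma_{t,q}}(0)}\psi_t(z)\left[-\Delta_z(\overline{\chi_{t,q}}Y_i)+\frac{8\,\overline{\chi_{t,q}}^2\,Y_i}{(1+|z|^2)^2}\right]dz,
\end{equation*}
using the formula \eqref{haty} for $\overline{Z_{t,q,i}}$. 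Integrating by parts (the boundary terms vanish since $\overline{\chi_{t,q}}$ is compactly supported in $B_{\Gamma_{t,q}}(0)$) and substituting $-\Delta_z\psi_t$ from \eqref{eq_for_psi}, I would obtain
\begin{equation*}
0=\int_{B_{\Gamma_{t,q}}(0)}\overline{\chi_{t,q}}(z)\,Y_i(z)\,\frac{8\psi_t(z)(1+\overline{\chi_{t,q}}(z))+o(1)}{(1+|z|^2)^2}dz-\Lambda_{t,q}^{-2}\!\int\overline{\chi_{t,q}}Y_i\!\left(\overline{g_t}+\textstyle\sum_j c_{t,q,j}\overline{Z_{t,q,j}}\right)dz.
\end{equation*}
The last integral equals $\int_M\hat{Y}_{t,q,i}g_t\,dv_g+\sum_j c_{t,q,j}\int_M\hat{Y}_{t,q,i}Z_{t,q,j}\,dv_g$, which vanishes by $g_t\in F_{\alpha,t,q,p}$, Lemma \ref{ztqi}-(iii), and Lemma \ref{magofcc}.

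To pass to the limit, I would apply dominated convergence using $|\psi_t|\leq 2$ and the integrable majorant $|Y_i(z)|/(1+|z|^2)^2=O((1+|z|)^{-3})$, together with $\overline{\chi_{t,q}}\to 1$ pointwise on $\mathbb{R}^2$. This yields
\begin{equation*}
0=\int_{\mathbb{R}^2}\frac{16\,Y_i(z)\,\psi_0(z)}{(1+|z|^2)^2}\,dz=16\,d_i\int_{\mathbb{R}^2}\frac{Y_i(z)^2}{(1+|z|^2)^2}\,dz,
\end{equation*}
since the cross integrals $\int Y_0Y_i/(1+|z|^2)^2=0$ and $\int Y_1Y_2/(1+|z|^2)^2=0$ by odd symmetry. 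As the remaining integral is strictly positive, $d_i=0$ for $i=1,2$. The limit is unique, so the whole sequence converges, finishing the proof. The main technical obstacle is the passage to the limit in integrals over the growing region $B_{\Gamma_{t,q}}(0)$; this is precisely where the decay of $Y_i/(1+|z|^2)^2$ is essential, and why the analogous argument cannot kill the dilation component $d_0$ (consistent with the absence of an orthogonality condition against $Y_0$).
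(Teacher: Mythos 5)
Your proposal is correct and follows essentially the same strategy as the paper: extract a $C^{0,\beta}_{\mathrm{loc}}$ limit $\psi_0$ from the $L^\infty$ bound and the $L^2$ control on $\Delta_z\psi_t$, identify $\psi_0$ as a bounded kernel element $\sum_{i=0}^{2}d_iY_i$ via \cite[Proposition 1]{bp}, and then use the orthogonality $\int_M\phi_tZ_{t,q,i}\,\mathrm{d}v_g=0$ together with Lemma~\ref{ztqi}-(i) (to remove the additive constant) to force $d_1=d_2=0$. The only stylistic difference is in the last step: you integrate by parts to move $\Delta$ onto $\psi_t$ and then substitute equation \eqref{eq_for_psi}, which introduces the $\overline{g_t}$ and $c_{t,q,j}\overline{Z_{t,q,j}}$ terms that you then dispose of using $g_t\in F_{\alpha,t,q,p}$, Lemma~\ref{ztqi}, and Lemma~\ref{magofcc}; the paper instead expands $\Delta_z(\overline{\chi_{t,q}}Y_i)$ via \eqref{by} directly, isolating the annular cutoff terms $\mathfrak{K}_{1,t,q}=o(1)$ and concluding $\mathfrak{K}_{2,t,q}=o(1)$ without ever invoking the equation for $\psi_t$. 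Both routes are valid and land on the same limiting identity $\int_{\mathbb{R}^2}\frac{16\psi_0Y_i}{(1+|z|^2)^2}\,\mathrm{d}z=0$; the paper's version is marginally shorter because it avoids the $g_t$ and $c_{t,q,i}$ bookkeeping.
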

\begin{proof}
From \eqref{bdd_of_psi_t}, we have  $\|\psi_t\|_{L^{\infty}(B_{\Gamma_{t,q}}(0))} \le O(1)$. We also note that
\[\|\Delta_z \psi_t(z)(1+|z|)^{1+\frac{\alpha}{2}}\|_{L^2(B_{\Gamma_{t,q}}(0))}=\| \Delta_z \overline{\phi}_t(z)(1+|z|)^{1+\frac{\alpha}{2}}\|_{L^2(B_{\Gamma_{t,q}}(0))}\le 1.\]
Using Sobolev embedding theorem, we can find a function $\psi_0$ such that $\psi_t\to\psi_0$ in $C^{0,\beta}_{\textrm{loc}}(\RN)$ and $\|\psi_0\|_{L^{\infty}(\RN)} \le O(1)$. Moreover, by Lemma \ref{magofcc} and
 $\|g_{t}\|_{Y_{\alpha,t,p}}=o(|\ln t|^{-1}),$
we can conclude that the limit of  equation \eqref{eq_for_psi} is
\begin{equation}
\label{limiteqinr2}
\begin{aligned}
\Delta\psi_0+\frac{8}{(1+|z|^2)^2}\psi_0=0\ \mbox{in}\ \RN.
\end{aligned}
\end{equation}
Since $\|\psi_0\|_{L^{\infty}(\RN)} \le O(1)$, we can apply \cite[Proposition 1]{bp} to get
\begin{equation}
\label{formofpsi_0}
\begin{aligned}
\psi_0(z) =\sum_{i=0}^2d_iY_i(z),
\end{aligned}
\end{equation}
where  $d_i$, $i=0, 1, 2$,  are some constants. By \eqref{haty}, we have
\[\overline{Z_{t,q,i}}(z)=Z_{t,q,i}(\Lambda_{t,q}^{-1}z+tq ) = \Lambda_{t,q}^2\Big(-\Delta_z (\overline{\chi_{t,q}}(z)Y_i(z))  +\frac{  8 \overline{\chi_{t,q}}^2(z)Y_i(z)}{(1+ |z|^2)^2}\Big).\]
By Lemma \ref{ztqi}-(i), we can derive $\phi_t-\frac{\int_Mh e^{U_{t,q} -G_t^{(2)} }\phi_t \mathrm{d}v_g}{\int_Mh e^{U_{t,q} -G_t^{(2)} }\mathrm{d}v_g}\in E_{\alpha,t,q,p}$ from $\phi_t\in E_{\alpha,t,q,p}$. Together with \eqref{haty}, we have
\begin{equation*}
\begin{aligned}
0&= \int_{B_{tR_0}(tq)}\Big(\phi_t(x)-\frac{\int_Mh e^{U_{t,q} -G_t^{(2)} }\phi_t \mathrm{d}v_g}{\int_Mh e^{U_{t,q} -G_t^{(2)} }\mathrm{d}v_g}\Big)Z_{t,q,i}(x)\mathrm{d}x\\
&= \int_{B_{\Gamma_{t,q}}(0)}\psi_t(z)\left(-\Delta(\overline{\chi_{t,q}}(z)Y_i(z))+\frac{8\overline{\chi_{t,q}}^2(z)Y_i(z)}{(1+|z|^2)^2}\right)\mathrm{d}z\\
&=\int_{B_{\Gamma_{t,q}}(0)\setminus B_{\frac{\Gamma_{t,q}}{2}}(0)}\psi_t(z)\left(-\frac{(\Delta\overline{\chi_{t,q}} )z_i}{(1+|z|^2)}  -2\nabla\overline{\chi_{t,q}} \cdot\nabla \Big(\frac{z_i}{(1+|z|^2)} \Big) \right) \mathrm{d}z\\
&\quad+\int_{B_{\Gamma_{t,q}}(0)}\psi_t(z) \frac{8\overline{\chi_{t,q}}(1+\overline{\chi_{t,q}} )Y_i(z)}{(1+|z|^2)^2}  \mathrm{d}z\\
&=\mathfrak{K}_{1,t,q}+\mathfrak{K}_{2,t,q}.
\end{aligned}
\end{equation*}
By \eqref{bdd_of_psi_t} and \eqref{cutoff}, we have $\mathfrak{K}_{1,t,q}=o(1)$. As a consequence, we can get $\mathfrak{K}_{2,t,q}=o(1)$ from the above equation. On the other hand, since $\psi_t\to\psi_0$ in $C^{0,\beta}_{\textrm{loc}}(\RN)$, we get that
$$0=\lim_{t\to0}\mathfrak{K}_{2,t,q}=\int_{\RN}\frac{ 16\psi_0(z) Y_i(z)}{(1+|z|^2)^2}  \mathrm{d}z,$$
which implies $d_i=0$, $i=1, 2$. Therefore, we get the conclusion.
\end{proof}

By $\int_{\RN}\frac{Y_0(z)}{(1+|z|^2)^2} \mathrm{d}z=\int_{\RN}\frac{(1-|z|^2)}{(1+|z|^2)^3} \mathrm{d}z=0$, \eqref{bdd_of_psi_t}, and Lemma \ref{limitfunctionofpsi_t}, we have
\begin{equation}
\begin{aligned}
\label{int_limit_of_psi_t}
\lim_{t\to0}\int_{B_{\Gamma_{t,q}}(0)}\frac{\psi_t(z)}{(1+|z|^2)^2} \mathrm{d}z=\int_{\RN}\frac{d_0Y_0(z)}{(1+|z|^2)^2} \mathrm{d}z=0.
\end{aligned}
\end{equation}
We will improve the estimation \eqref{int_limit_of_psi_t} in the following result. The proof makes use of the test function $\eta_1(z)=-Y_0(z)-1$.

\begin{lemma}
\label{improved_int_limit_of_psi_t}
(i) $\int_{B_{\Gamma_{t,q}}(0)}\frac{\psi_t(z)}{(1+|z|^2)^2} \mathrm{d}z=o(|\ln t|^{-1}).$

\noindent (ii) $ \int_{B_{\Gamma_{t,q}}(0)}(-\Delta\psi_t(z)) \mathrm{d}z=o(|\ln t|^{-1}).$
\end{lemma}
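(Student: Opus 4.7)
The plan is to use the test function $\eta_1(z) = -Y_0(z) - 1 = -\tfrac{2}{1+|z|^2}$ highlighted in the hint. Its importance rests on the identity
\begin{equation*}
L\eta_1 \,:=\, \Delta\eta_1 + \frac{8\eta_1}{(1+|z|^2)^2} \,=\, -\frac{8}{(1+|z|^2)^2},
\end{equation*}
obtained from $LY_0=0$ and $L(1)=8/(1+|z|^2)^2$. Thus $\eta_1$ is the natural dual weight for the quantity $I_1:=\int_{B_{\Gamma_{t,q}}(0)}\tfrac{\psi_t(z)}{(1+|z|^2)^2}\,dz$ that we must estimate in (i).

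First I would reduce (ii) to (i). Integrating \eqref{eq_for_psi} directly over $B_{\Gamma_{t,q}}(0)$ and writing its Liouville-type coefficient as $1+\epsilon_t(z)$ with $\epsilon_t(z)=O(e^{-\lambda_{t,q}/2}|z|+t|q|+t^2|\ln t|)$, the interior sum equals $-I_2+8I_1+o(|\ln t|^{-1})$ (the $\epsilon_t$-error is controlled using $|\psi_t|\le 2$ and $e^{-\lambda_{t,q}/2}=O(t)$). The right-hand side splits into $\int\Lambda_{t,q}^{-2}\overline{g_t}\,dz$, which is $o(|\ln t|^{-1})$ by H\"older pairing of $t^2e^{-\lambda_{t,q}}\overline{g_t}(1+|z|)^{1+\alpha/2}\in L^2$ with $(1+|z|)^{-1-\alpha/2}\in L^2(\mathbb{R}^2)$, and a sum $\sum_i c_{t,q,i}\int\Lambda_{t,q}^{-2}\overline{Z_{t,q,i}}\,dz$ which vanishes identically: $\Lambda_{t,q}^{-2}\overline{Z_{t,q,i}}=-\Delta(\overline{\chi_{t,q}}Y_i)+8\overline{\chi_{t,q}}^2Y_i/(1+|z|^2)^2$, whose divergence piece integrates to zero by compact support and whose remainder vanishes by the oddness of $Y_i$ in $z_i$ against the radial $\overline{\chi_{t,q}}$. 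This yields the preliminary identity
\begin{equation*}
I_2 \,=\, 8I_1 + o(|\ln t|^{-1}),
\end{equation*}
and the trivial bound $|I_1|\le 2\int_{\mathbb{R}^2}(1+|z|^2)^{-2}\,dz$ then gives $|I_2|=O(1)$ a priori.

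Next I would prove (i) by multiplying \eqref{eq_for_psi} by $\eta_1$ and integrating over $B_{\Gamma_{t,q}}(0)$. Integration by parts combined with $\Delta\eta_1=8Y_0/(1+|z|^2)^2$ rewrites the interior LHS as $8\int\tfrac{(Y_0+(1+\epsilon_t)\eta_1)\psi_t}{(1+|z|^2)^2}dz + \text{bdy}$; the algebraic identity $Y_0+\eta_1=-1$ combined with $(1+Y_0)\eta_1 = -\tfrac{2\eta_1\cdot\text{sign}}{1+|z|^2}$... more concretely, the combination reduces to $-1-\epsilon_t(z)\cdot\tfrac{2}{1+|z|^2}$, so the interior collapses to $-8I_1+o(|\ln t|^{-1})$ (the extra factor $1/(1+|z|^2)^3$ against $\epsilon_t$ stays integrable). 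On the right, $\int\eta_1\Lambda_{t,q}^{-2}\overline{g_t}\,dz=o(|\ln t|^{-1})$ by H\"older against the weight $\tfrac{1}{(1+|z|^2)(1+|z|)^{1+\alpha/2}}\in L^2(\mathbb{R}^2)$, while both integrals $\int\eta_1\Lambda_{t,q}^{-2}\overline{Z_{t,q,i}}\,dz$ vanish by parity in $z_i$ (after IBP on the divergence piece, every resulting integrand is odd in $z_i$ and the remaining factors $\eta_1,\overline{\chi_{t,q}},(1+|z|^2)^{-k}$ are radial). The boundary terms on $\partial B_{\Gamma_{t,q}}(0)$ are handled via $\eta_1=O(t^2)$ and $|\nabla\eta_1|=O(t^3)$ there: $|\int_\partial\eta_1\partial_\nu\psi_t\,dS|\le O(t^2)|\int_\partial\partial_\nu\psi_t\,dS|=O(t^2)|I_2|=O(t^2)$ using the step-one bound, and $|\int_\partial\psi_t\partial_\nu\eta_1\,dS|\le O(t^3)\cdot O(t^{-1})\cdot 2 = O(t^2)$. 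Combining everything gives $-8I_1=o(|\ln t|^{-1})$, proving (i); then (ii) follows from the identity above.

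The main obstacle is the boundary term $\int_{\partial B_{\Gamma_{t,q}}(0)}\eta_1\partial_\nu\psi_t\,dS$. There is no pointwise gradient control of $\psi_t$ on this boundary, since $\|\phi_t\|_{X_{\alpha,t,p}}\le 1$ only provides weighted $L^2$ Laplacian information inside the ball and $L^p$ Sobolev information outside $B_{tR_0/2}(tq)$; the scaling relation $\partial_\nu\psi_t = \Lambda_{t,q}^{-1}\partial_\nu\phi_t$ by itself does not give a useful pointwise estimate. One must instead use the divergence-theorem identity $\int_\partial\partial_\nu\psi_t\,dS=-I_2$ to replace the unsigned boundary integral by an integrated quantity, which is only useful after part (ii) is in hand. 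This is exactly why the two parts of the lemma are genuinely coupled and must be bootstrapped together using the identity $I_2=8I_1+o(|\ln t|^{-1})$ together with the trivial a priori bound $|I_1|=O(1)$.
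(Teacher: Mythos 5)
Your proof is correct, and it rests on the same key ingredient as the paper's: pairing the equation for $\psi_t$ with the test function $\eta_1(z)=-Y_0(z)-1=-2/(1+|z|^2)$, whose defining identity $L\eta_1=-8/(1+|z|^2)^2$ converts the paired equation directly into an estimate on $I_1$. Where you depart from the paper is in how the boundary of $B_{\Gamma_{t,q}}(0)$ is treated. The paper multiplies by $\eta_1\overline{\chi_{t,q}}$, whose compact support inside the ball eliminates boundary terms entirely; the price is the extra terms $2\nabla\eta_1\cdot\nabla\overline{\chi_{t,q}}$ and $\eta_1\Delta\overline{\chi_{t,q}}$ supported in the annulus $B_{\Gamma_{t,q}}\setminus B_{\Gamma_{t,q}/2}$, which are dispatched at order $O(t^2)$ using $|\nabla\overline{\chi_{t,q}}|=O(t)$, $|\nabla^2\overline{\chi_{t,q}}|=O(t^2)$ (compare \eqref{improved_int_limit_of_psi_t3}). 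You instead multiply by $\eta_1$ alone, accept the two boundary integrals on $\partial B_{\Gamma_{t,q}}(0)$, and control them by exploiting that $\eta_1$ is radial and hence constant on that circle, so $\int_{\partial}\eta_1\partial_\nu\psi_t\,dS=\eta_1(\Gamma_{t,q})\int_{B_{\Gamma_{t,q}}}\Delta\psi_t\,dz=O(t^2)\,|I_2|$, while $\int_\partial\psi_t\partial_\nu\eta_1\,dS=O(t^2)$ from size estimates. This forces you to first establish the a priori bound $|I_2|=O(1)$, which you correctly obtain by integrating \eqref{eq_for_psi} over the ball (recovering $I_2=8I_1+o(|\ln t|^{-1})$) and combining with the trivial $|I_1|=O(1)$; the paper, because it has no boundary terms, can prove (i) cleanly first and then read off (ii) from the integrated equation, with no need for a preliminary bound. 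A further cosmetic difference: for the $c_{t,q,i}\overline{Z_{t,q,i}}$ contributions you observe they vanish identically by oddness in $z_i$ against the radial factors $\eta_1,\overline{\chi_{t,q}},(1+|z|^2)^{-k}$ (and compact support of the divergence piece), whereas the paper simply invokes the size estimate $|c_{t,q,i}|\|Z_{t,q,i}\|_{Y_{\alpha,t,p}}=O(t)$ from Lemma~\ref{magofcc}; both are adequate. In short, your route buys a parity-based exactness and makes the coupling between (i) and (ii) explicit, while the paper's cut-off route is slightly cleaner because it sidesteps boundary integrals altogether; both give the same $O(t^2)$ error and the same conclusion.
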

\begin{proof}Let $\eta_1(z)=-Y_0(z)-1=\frac{-2}{(1+|z|^2)}$.
Then $\eta_1$ satisfies
\begin{equation}
\begin{aligned}
\label{eq_test1}
\Delta\eta_1(z)+\frac{8\eta_1(z)}{(1+|z|^2)^2}=-\frac{8}{(1+|z|^2)^2}\ \mbox{in}\ \RN.
\end{aligned}
\end{equation}
Multiplying $\eta_1(z)\overline{\chi_{t,q}}(z)$ on both sides of \eqref{eq_for_psi} and using the integration by parts, we have
\begin{equation}
\label{improved_int_limit_of_psi_t1}
\begin{aligned}
0=&\int_{B_{\Gamma_{t,q}}(0)}\Bigg[\psi_t(z)\Bigg\{(\Delta\eta_1(z))\overline{\chi_{t,q}}(z)+2\nabla\eta_1(z)\nabla\overline{\chi_{t,q}}(z)
+\eta_1(z)(\Delta\overline{\chi_{t,q}}(z))\\
&+\Big(\frac{8+O(1)(e^{-\frac{\lambda_{t,q}}{2}}|z|+t|q|+t^2|\ln t|)}{(1+|z|^2)^2} \Big)\eta_1(z)\overline{\chi_{t,q}}(z)\Bigg\}\\
&-\Lambda_{t,q}^{-2}\Big(\overline{g_{t}}(z)+\sum_{i=1}^2c_{t,q,i}\overline{Z_{t,q,i}}(z)\Big)\eta_1(z)\overline{\chi_{t,q}}(z)\Bigg]\mathrm{d}z.
\end{aligned}
\end{equation}
Using H\"{o}lder inequality, we have
\begin{equation}
\label{improved_int_limit_of_psi_t01}
\begin{aligned}
&\int_{B_{\Gamma_{t,q}}(0)}\Lambda_{t,q}^{-2}\Big(|\overline{g_{t}}(z)|
+\sum_{i=1}^2|c_{t,q,i} \overline{Z_{t,q,i}}(z)|\Big)|\eta_1(z)\overline{\chi_{t,q}}(z)| \mathrm{d}z\\
&\le O(1)\Big( \|g_{t}\|_{Y_{\alpha,t,p}}+\sum_{i=1}^2|c_{t,q,i}|\|  Z_{t,q,i}\|_{Y_{\alpha,t,p}}\Big).
\end{aligned}
\end{equation}
By \eqref{eq_test1}, we see that
\begin{equation}
\label{improved_int_limit_of_psi_t2}
\begin{aligned}
\int_{B_{\Gamma_{t,q}}(0)}\psi_t(z)\Big\{(\Delta\eta_1(z))\overline{\chi_{t,q}}(z) +\frac{8\eta_1(z)\overline{\chi_{t,q}}(z)}{(1+|z|^2)^2} \Big\}\mathrm{d}z
=-\int_{B_{\Gamma_{t,q}}(0)}\frac{8\psi_t(z)\overline{\chi_{t,q}}(z)}{(1+|z|^2)^2}\mathrm{d}z.
\end{aligned}
\end{equation}
By \eqref{bdd_of_psi_t}, \eqref{cutoff} and $\eta_1(z)=O(1/(1+|z|^2))$ , we have
\begin{equation}
\label{improved_int_limit_of_psi_t3}
\begin{aligned}
&\int_{B_{\Gamma_{t,q}}(0)}\psi_t(z)\Big\{ 2\nabla\eta_1(z)\nabla\overline{\chi_{t,q}}(z)+\eta_1(z)(\Delta\overline{\chi_{t,q}}(z)) \Big\}\mathrm{d}z\\
&=\int_{B_{\Gamma_{t,q}}(0)\setminus B_{\frac{\Gamma_{t,q}}{2}}(0) }\psi_t(z)\Big\{2\nabla\eta_1(z)\nabla\overline{\chi_{t,q}}(z)
+\eta_1(z)(\Delta\overline{\chi_{t,q}}(z)) \Big\}\mathrm{d}z
=O(t^2),
\end{aligned}
\end{equation}
and
\begin{equation}
\label{improved_int_limit_of_psi_t4}
\begin{aligned}
& \int_{B_{\Gamma_{t,q}}(0)} \frac{ |\psi_t(z)|(e^{-\frac{\lambda_{t,q}}{2}}|z|+t|q|+t^2|\ln t|)}{(1+|z|^2)^2} \eta_1(z)\overline{\chi_{t,q}}(z)\mathrm{d}z
=O(t).
\end{aligned}
\end{equation}
From \eqref{improved_int_limit_of_psi_t1}-\eqref{improved_int_limit_of_psi_t4}, we have
\begin{equation}
\label{int_psi_with_weight}
\begin{aligned}
& \int_{B_{\Gamma_{t,q}}(0)}\frac{8\psi_t(z)\overline{\chi_{t,q}}(z)}{(1+|z|^2)^2}\mathrm{d}z
=O(1)(\|g_{t}\|_{Y_{\alpha,t,p}}+\sum_{i=1}^2|c_{t,q,i}|  \|Z_{t,q,i}\|_{Y_{\alpha,t,p}}+t).
\end{aligned}
\end{equation}
Together with $\|g_{t}\|_{Y_{\alpha,t,p}}=o(|\ln t|^{-1})$  and Lemma \ref{magofcc},  we prove the Lemma \ref{improved_int_limit_of_psi_t}-(i).

By \eqref{eq_for_psi}, we see that
\begin{equation}
\label{int_eq_for_psi}
\begin{aligned}
&\int_{B_{\Gamma_{t,q}}(0)}(-\Delta\psi_t(z))\mathrm{d}z\\
&=\int_{B_{\Gamma_{t,q}}(0)}\frac{ 8 \psi_t(z)}{(1+|z|^2)^2}\mathrm{d}z+\int_{B_{\Gamma_{t,q}}(0)}\frac{  O(1)(|\psi_t(z)|e^{-\frac{\lambda_{t,q}}{2}}|z|+t|q|+t^2|\ln t|)}{(1+|z|^2)^2}\mathrm{d}z\\
&\quad+O(1)(\|g_{t}\|_{Y_{\alpha,t,p}}).
\end{aligned}
\end{equation}
By \eqref{int_psi_with_weight} and $\|g_{t}\|_{Y_{\alpha,t,p}}=o(|\ln t|^{-1})$, we get the Lemma \ref{improved_int_limit_of_psi_t}-(ii).
\end{proof}

By Lemma \ref{improved_int_limit_of_psi_t}, we get the following result.
\begin{lemma}
\label{int_limit_outside}
$\frac{\int_Mh e^{U_{t,q} -G_t^{(2)} }\phi_t \mathrm{d}v_g}{\int_Mh e^{U_{t,q} -G_t^{(2)} }\mathrm{d}v_g}= \frac{\int_{M\setminus B_{t{R_0}}(tq)}h e^{w}\phi_t \mathrm{d}v_g}{\int_Mh e^{w }\mathrm{d}v_g}+o(|\ln t|^{-1}).$
\end{lemma}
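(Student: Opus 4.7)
The plan is to split the numerator on the left-hand side into contributions from $B_{tR_0}(tq)$ and its complement, and to exploit that Lemma~\ref{basic_est_for_app}-(iii) already pins down the normalized density $\frac{he^{U_{t,q}-G_t^{(2)}}}{\int_M he^{U_{t,q}-G_t^{(2)}}\,dv_g}$ on each piece. Setting $\bar c_t := \frac{\int_M he^{U_{t,q}-G_t^{(2)}}\phi_t\,dv_g}{\int_M he^{U_{t,q}-G_t^{(2)}}\,dv_g}$, we have $\phi_t(x) = \psi_t(\Lambda_{t,q}(x-tq)) + \bar c_t$ on $B_{tR_0}(tq)$ by \eqref{defo_of_psi_t}, so that $\bar c_t = \mathcal I_{t,q} + \mathcal O_{t,q}$, where $\mathcal I_{t,q}$ and $\mathcal O_{t,q}$ denote the normalized integrals of $\phi_t$ over the tiny ball and its complement, respectively.

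For $\mathcal I_{t,q}$ I would pass to the double-scaled variable $z = \Lambda_{t,q}(x-tq)$ and apply the first clause of Lemma~\ref{basic_est_for_app}-(iii) together with the Taylor expansion $H_{t,q}(\Lambda_{t,q}^{-1}z/t+q)/H_{t,q}(q) = 1 + O(t|z|)$, valid on $B_{\Gamma_{t,q}}(0)$ since $\Lambda_{t,q}^{-1}/t = C_{t,q}e^{-\lambda_{t,q}/2} = O(t)$. The constant part yields $\bar c_t\cdot\frac{8\pi}{\rho}(1+O(t^2|\ln t|))$, while the $\psi_t$-part reduces to $\int_{B_{\Gamma_{t,q}}(0)}\frac{\psi_t(z)}{(1+|z|^2)^2}\,dz$ plus a correction of size $O(t)$ coming from the $O(t|z|)$ factor and $\|\psi_t\|_\infty = O(1)$; this integral is exactly $o(|\ln t|^{-1})$ by Lemma~\ref{improved_int_limit_of_psi_t}-(i). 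For $\mathcal O_{t,q}$, the second clause of Lemma~\ref{basic_est_for_app}-(iii), coupled with $\|\phi_t\|_\infty\le 1$ and the elementary two-dimensional bounds $\int_{tR_0\le|y|\le \mathbf{r}_0}|y|^{-1}\,dy = O(1)$, $\int_{tR_0\le|y|\le \mathbf{r}_0}|y|^{-2}\,dy = O(|\ln t|)$, gives
\[
\mathcal O_{t,q} = \frac{\rho-8\pi}{\rho}\,\frac{\int_{M\setminus B_{tR_0}(tq)}he^w\phi_t\,dv_g}{\int_Mhe^w\,dv_g} + O(t|q| + t^2|\ln t|),
\]
and the error is $o(|\ln t|^{-1})$ since $|q|\le r_0$ and $t\to 0$.

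Adding the two and rearranging yields
\[
\bar c_t\left(1-\tfrac{8\pi}{\rho}\bigl(1+O(t^2|\ln t|)\bigr)\right) = \frac{\rho-8\pi}{\rho}\,\frac{\int_{M\setminus B_{tR_0}(tq)}he^w\phi_t\,dv_g}{\int_Mhe^w\,dv_g} + o(|\ln t|^{-1}).
\]
The coefficient of $\bar c_t$ equals $\frac{\rho-8\pi}{\rho}(1+O(t^2|\ln t|))$, which is bounded away from zero since $\rho\notin 8\pi\mathbb{N}$; dividing through delivers the identity. The key obstacle is the $\psi_t$-part of $\mathcal I_{t,q}$: a crude bound via $\|\psi_t\|_\infty = O(1)$ alone only produces an $O(1)$ contribution, which is far too weak. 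The gain to $o(|\ln t|^{-1})$ comes entirely from the vanishing integral in Lemma~\ref{improved_int_limit_of_psi_t}-(i), itself extracted by testing \eqref{eq_for_psi} against $\eta_1 = -Y_0-1$ and exploiting $\int_{\mathbb R^2}Y_0(1+|z|^2)^{-2}\,dz = 0$; this is the place where the non-concentration of mass is reconciled with the dilation-mode approximate kernel.
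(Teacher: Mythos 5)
Your proof is correct and follows essentially the same route as the paper: split $M$ into $B_{tR_0}(tq)$ and its complement, apply Lemma~\ref{basic_est_for_app}-(iii) on each piece, reduce the inner contribution to $\bar c_t\cdot\frac{8\pi}{\rho}(1+O(t^2))$ plus the weighted integral of $\psi_t$, invoke Lemma~\ref{improved_int_limit_of_psi_t}-(i) to kill that integral to $o(|\ln t|^{-1})$, and then solve the resulting near-identity for $\bar c_t$. The paper phrases the inner-ball step via \eqref{global_massin2}--\eqref{global_mass1} keeping $\overline{\phi_t}$ rather than splitting $\overline{\phi_t}=\psi_t+\bar c_t$ up front, but this is only a presentational difference; you also correctly identify that the gain from $O(1)$ to $o(|\ln t|^{-1})$ hinges entirely on Lemma~\ref{improved_int_limit_of_psi_t}-(i).
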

\begin{proof} In order to estimate $\frac{\int_Mh e^{U_{t,q} -G_t^{(2)} }\phi_t \mathrm{d}v_g}{\int_Mh e^{U_{t,q} -G_t^{(2)} }\mathrm{d}v_g}$, we divide $M$ into $M\setminus B_{t{R_0}}(tq)$ and $B_{t{R_0}}(tq)$. Using Lemma \ref{basic_est_for_app}, we have
\begin{equation}
\label{global_mass2}
\begin{aligned}
&\frac{\int_{M\setminus B_{t{R_0}}(tq)}h e^{U_{t,q} -G_t^{(2)} }\phi_t \mathrm{d}v_g}{\int_Mh e^{U_{t,q} -G_t^{(2)} }\mathrm{d}v_g}\\
&=\left(\frac{\rho-8\pi }{\rho }\right)\frac{ \int_{M\setminus B_{t{R_0}}(tq)}h(x)e^{w(x)}\phi_t \mathrm{d}v_g}{ \int_Mhe^w\mathrm{d}v_g}\\
&\quad+\int_{M\setminus B_{t{R_0}}(tq)}O(|\phi_t(x)|)\Big(1_{B_{\mathbf{r}_0}(0)}(x)\left(\frac{t|q||x-tq|+t^2}{|x-tq|^2}\right)+t^2|\ln t|+t|q|\Big) \mathrm{d}v_g,
\end{aligned}
\end{equation}
and
\begin{equation}
\label{global_massin}
\begin{aligned}
&\frac{\int_{B_{t{R_0}}(tq)}h e^{U_{t,q} -G_t^{(2)} }\phi_t \mathrm{d}v_g}{\int_Mh e^{U_{t,q} -G_t^{(2)} }\mathrm{d}v_g}
\\&=\int_{B_{t{R_0}}(tq)}\frac{C_{t,q}^2\Lambda_{t,q}^2H_{t,q}\left(t^{-1}x\right)(1+O(t|q|+t^2|\ln t|))\phi_t(x)}{(1+\Lambda_{t,q}^2|x-tq|^2)^2}\mathrm{d}x.
\end{aligned}
\end{equation}
From the changing of variable $x\to\Lambda_{t,q}^{-1}z+tq$, we get from \eqref{global_massin} that
\begin{equation}
\label{global_massin2}
\begin{aligned}
\frac{\int_{B_{t{R_0}}(tq)}h e^{U_{t,q} -G_t^{(2)}}\phi_t \mathrm{d}v_g}{\int_Mh e^{U_{t,q} -G_t^{(2)}}\mathrm{d}v_g}
&=\int_{B_{\Gamma_{t,q}}(0)}\frac{8\overline{\phi_t}(z)\frac{H_{t,q}(\Lambda_{t,q}^{-1}t^{-1}z+q)}{H_{t,q}(q)}
(1+O(t|q|+t^2|\ln t|))}{\rho(1+|z|^2)^2}\mathrm{d}z\\
&=\int_{B_{\Gamma_{t,q}}(0)} \frac{8\overline{\phi_t}(z)(1+O(e^{-\frac{\lambda_{t,q}}{2}}|z|+t|q|+t^2|\ln t|))}{\rho(1+|z|^2)^2}\mathrm{d}z\\
&=\int_{B_{\Gamma_{t,q}}(0)} \frac{8\overline{\phi_t}(z) }{\rho(1+|z|^2)^2}  \mathrm{d}z +O\left(t\|\phi_t\|_{L^{\infty}(M)}\right).
\end{aligned}
\end{equation}
Moreover, from Lemma \ref{improved_int_limit_of_psi_t} and the definition of $\psi_t$, we have
\begin{equation}
\label{global_mass1}
\begin{aligned}
o(|\ln t|^{-1})=&\int_{B_{\Gamma_{t,q}}(0)}\frac{8{\psi_t}(z)}{(1+|z|^2)^2} \mathrm{d}z
=\int_{B_{\Gamma_{t,q}}(0)}\frac{8\left(\overline{\phi_t}(z)-\frac{\int_Mh e^{U_{t,q} -G_t^{(2)} }\phi_t \mathrm{d}v_g}{\int_Mh e^{U_{t,q} -G_t^{(2)} }\mathrm{d}v_g}\right)}{(1+|z|^2)^2}\mathrm{d}z \\
=&\int_{B_{\Gamma_{t,q}}(0)}\frac{8\overline{\phi_t}(z)}{(1+|z|^2)^2}\mathrm{d}z-\frac{\int_Mh e^{U_{t,q} -G_t^{(2)} }\phi_t \mathrm{d}v_g}{\int_Mh e^{U_{t,q} -G_t^{(2)} }\mathrm{d}v_g}(8\pi+O(t^2)).
\end{aligned}
\end{equation}
Using \eqref{global_mass2}-\eqref{global_mass1}, we finally get that
\begin{equation*}
\begin{aligned}
&\frac{\int_Mh e^{U_{t,q} -G_t^{(2)} }\phi_t \mathrm{d}v_g}{\int_Mh e^{U_{t,q} -G_t^{(2)} }\mathrm{d}v_g}
=\Big(\int_{M\setminus B_{t{R_0}}(tq)}+\int_{B_{t{R_0}}(tq)}\Big)\frac{h e^{U_{t,q} -G_t^{(2)} }\phi_t}{\int_Mh e^{U_{t,q} -G_t^{(2)} }\mathrm{d}v_g}\mathrm{d}v_g\\
&=\left(1-\frac{8\pi }{\rho }\right)\frac{ \int_{M\setminus B_{t{R_0}}(tq)}h(x)e^{w(x)}\phi_t \mathrm{d}v_g}{ \int_Mhe^w\mathrm{d}v_g}
+\frac{8\pi}{\rho}\frac{\int_Mh e^{U_{t,q} -G_t^{(2)} }\phi_t \mathrm{d}v_g}{\int_Mh e^{U_{t,q} -G_t^{(2)} }\mathrm{d}v_g} +o(|\ln t|^{-1}),
\end{aligned}
\end{equation*}
which proves Lemma \ref{int_limit_outside}.
\end{proof}
For any function $g$ satisfying $g(z)(1+|z|)^{1+\frac{\alpha}{2}}\in L^2(\RN)$, we recall the following inequality, which will be useful for our later arguments.
\begin{lemma}
\cite{cfl}
\label{estfory}
There is a constant $c >0$, independent of $x\in\RN\setminus B_2(0)$ and $g$, such that
\[\left|\intr (\ln|x-z|-\ln|x|)g(z)\mathrm{d}z\right|\le c  |x|^{-\frac{\alpha}{2}}(\ln|x|+1)\|g(z)(1+|z|)^{1+\frac{\alpha}{2}}\|_{L^2(\RN)}.\]
\end{lemma}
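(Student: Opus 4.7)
\textbf{Proof plan for Lemma \ref{estfory}.} The strategy is the standard dyadic decomposition of $\mathbb{R}^2$ with respect to the distance to $x$. Fix $x\in\RN\setminus B_2(0)$ and split
\[
\RN=A_1\cup A_2\cup A_3,\ \ \ A_1=\{|z|\le |x|/2\},\ \ A_2=\{|x|/2<|z|<2|x|\},\ \ A_3=\{|z|\ge 2|x|\}.
\]
On each piece I will bound the integrand pointwise in terms of the weight $(1+|z|)^{-1-\frac{\alpha}{2}}$ and then apply Cauchy--Schwarz against $\|g(z)(1+|z|)^{1+\frac{\alpha}{2}}\|_{L^2}$.

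On $A_1$ the estimate is essentially a first-order Taylor expansion: since $|x-z|/|x|\in[1/2,3/2]$ there, one has $\bigl|\ln|x-z|-\ln|x|\bigr|\le C|z|/|x|$. Cauchy--Schwarz then requires controlling
\[
\frac{1}{|x|^2}\int_{A_1}|z|^2(1+|z|)^{-2-\alpha}\,\mathrm{d}z,
\]
which in polar coordinates is $\lesssim |x|^{-\alpha}$ for $\alpha\in(0,\frac12)$, giving the bound $C|x|^{-\alpha/2}\|g(\cdot)(1+|\cdot|)^{1+\frac{\alpha}{2}}\|_{L^2}$. On $A_3$, $|x-z|/|x|$ lies between $|z|/(2|x|)$ and $3|z|/(2|x|)$, so $\bigl|\ln|x-z|-\ln|x|\bigr|\le \ln(|z|/|x|)+C$; the change of variables $s=|z|/|x|$ shows
\[
\int_{A_3}\bigl(\ln(|z|/|x|)+1\bigr)^{2}(1+|z|)^{-2-\alpha}\,\mathrm{d}z \lesssim |x|^{-\alpha}\int_{2}^{\infty}(\ln s+1)^{2}s^{-1-\alpha}\,\mathrm{d}s\lesssim |x|^{-\alpha},
\]
again yielding the desired $C|x|^{-\alpha/2}\|\cdot\|_{L^2}$ bound.

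The genuinely delicate region is $A_2$, where $\ln|x-z|$ may be singular at $z=x$; this is where the factor $(\ln|x|+1)$ in the conclusion is forced to appear. I treat $\ln|x-z|$ and $\ln|x|$ separately. On $A_2$ the weight is comparable to $|x|^{-1-\frac{\alpha}{2}}$, so by Cauchy--Schwarz it suffices to estimate
\[
|x|^{-2-\alpha}\int_{A_2}\bigl(\ln|x-z|\bigr)^{2}\,\mathrm{d}z\ \ \text{and}\ \ |x|^{-2-\alpha}\bigl(\ln|x|\bigr)^{2}\int_{A_2}\,\mathrm{d}z.
\]
A direct computation using $\int_{B_{3|x|}(x)}(\ln|x-z|)^{2}\,\mathrm{d}z\lesssim |x|^{2}(\ln|x|+1)^{2}$ together with $|A_2|\lesssim |x|^{2}$ shows both quantities are $\lesssim |x|^{-\alpha}(\ln|x|+1)^{2}$, giving the contribution $C|x|^{-\alpha/2}(\ln|x|+1)\|g(\cdot)(1+|\cdot|)^{1+\frac{\alpha}{2}}\|_{L^{2}}$ from $A_2$.

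Summing the three contributions produces the claimed inequality. The main technical point, as indicated above, is the control of $\int_{A_2}(\ln|x-z|)^{2}\,\mathrm{d}z$: one has to separate the logarithmic singularity near $z=x$ from the large-$|z|$ behaviour, and it is precisely this singularity that is responsible for the extra $(\ln|x|+1)$ factor appearing in the right-hand side. Everything else reduces to routine polar-coordinate estimates, so no additional structural ideas beyond the dyadic splitting are required.
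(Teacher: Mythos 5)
The paper does not reproduce a proof of this lemma: it is quoted from Chan--Fu--Lin~\cite{cfl}, so there is no in-text argument to compare against directly. Your proof, however, is correct as written, and the dyadic splitting $A_1=\{|z|\le |x|/2\}$, $A_2=\{|x|/2<|z|<2|x|\}$, $A_3=\{|z|\ge 2|x|\}$ together with Cauchy--Schwarz against the weight $(1+|z|)^{1+\alpha/2}$ is exactly the natural (and essentially standard) route for this kind of weighted Green's-function estimate. In particular: on $A_1$ the bound $|\ln|x-z|-\ln|x||\le C|z|/|x|$ is valid because $||x-z|-|x||\le|z|\le|x|/2$; the resulting integral $|x|^{-2}\int_0^{|x|/2} r^3(1+r)^{-2-\alpha}\,dr\lesssim |x|^{-\alpha}$ uses $\alpha<2$, which holds under the paper's standing assumption $\alpha\in(0,\tfrac12)$. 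On $A_3$ the substitution $s=|z|/|x|$ together with $(1+|x|s)\ge|x|s$ gives the displayed $|x|^{-\alpha}$ bound, and $\int_2^\infty(\ln s+1)^2 s^{-1-\alpha}\,ds<\infty$ since $\alpha>0$. On $A_2$, the comparison $(1+|z|)^{-1-\alpha/2}\sim|x|^{-1-\alpha/2}$ and the estimate $\int_{B_{3|x|}(x)}(\ln|x-z|)^2\,dz\lesssim|x|^2(\ln|x|+1)^2$ (which is a straightforward polar-coordinate computation, with the logarithmic singularity at $z=x$ integrable since $(\ln r)^2 r$ is integrable near $0$) produce precisely the $|x|^{-\alpha/2}(\ln|x|+1)$ factor, and your remark that this region is what forces the $(\ln|x|+1)$ is accurate. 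The three contributions add to the claimed bound with constants uniform in $|x|\ge 2$. No gap.
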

By H\"{o}lder inequality and Lemma \ref{magofcc}, we get the following estimation for $\phi_t$ satisfying \eqref{c}.
\begin{lemma}
\label{phitalphat}
$\int_M|\Delta \phi_t|\mathrm{d}v_g \le O(1)(\|\phi_t\|_{X_{\alpha,t,p}})
\le O(1)(\|\phi_t\|_{L^{\infty}(M)})+o(|\ln t|^{-1}).$
\end{lemma}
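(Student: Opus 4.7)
\textbf{Proof proposal for Lemma \ref{phitalphat}.} The plan is to prove the two inequalities separately, using only the defining norm of $X_{\alpha,t,p}$ together with the equations satisfied by $\phi_t$ and by its double rescaling $\psi_t$. For the first inequality, I would split $M=B_{tR_0/2}(tq)\cup (M\setminus B_{tR_0/2}(tq))$. On the outer piece, H\"older gives $\int_{M\setminus B_{tR_0/2}(tq)}|\Delta\phi_t|\,\mathrm{d}v_g\le |M|^{1-1/p}\|\Delta\phi_t\|_{L^p(M\setminus B_{tR_0/2}(tq))}$, which is one of the four summands of $\|\phi_t\|_{X_{\alpha,t,p}}$. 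On the inner ball, the rescaling $z=\Lambda_{t,q}(x-tq)$ gives $\Delta_x\phi_t=\Lambda_{t,q}^2\Delta_z\overline\phi_t$ and $\mathrm{d}x=\Lambda_{t,q}^{-2}\mathrm{d}z$, so the two $\Lambda$-factors cancel and Cauchy--Schwarz yields
\[
\int_{B_{tR_0/2}(tq)}|\Delta\phi_t|\,\mathrm{d}x\le \bigl\|\Delta_z\overline\phi_t(z)(1+|z|)^{1+\frac{\alpha}{2}}\bigr\|_{L^2(B_{\Gamma_{t,q}}(0))}\cdot\bigl\|(1+|z|)^{-1-\frac{\alpha}{2}}\bigr\|_{L^2(\mathbb{R}^2)},
\]
and the second factor is finite since $\alpha>0$. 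This proves the first inequality.

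For the second inequality I bound each of the four summands in $\|\phi_t\|_{X_{\alpha,t,p}}$. Two are easy: $\|\phi_t\|_{L^p(M\setminus B_{tR_0/2}(tq))}\le |M|^{1/p}\|\phi_t\|_{L^\infty(M)}$ and $\|\overline\phi_t(z)\rho(z)\|_{L^2(B_{\Gamma_{t,q}}(0))}\le \|\phi_t\|_{L^\infty(M)}\|\rho\|_{L^2(\mathbb{R}^2)}=O(\|\phi_t\|_{L^\infty(M)})$. For the two Laplacian pieces I substitute the equation. From \eqref{c},
\[
\Delta\phi_t=-\rho\frac{he^{U_{t,q}-G_t^{(2)}}}{\int_M he^{U_{t,q}-G_t^{(2)}}\mathrm{d}v_g}\Bigl(\phi_t-\tfrac{\int h e^{U_{t,q}-G_t^{(2)}}\phi_t\mathrm{d}v_g}{\int h e^{U_{t,q}-G_t^{(2)}}\mathrm{d}v_g}\Bigr)+g_t+\sum_{i=1}^2 c_{t,q,i}Z_{t,q,i}.
\]
On $M\setminus B_{tR_0/2}(tq)$, Lemma \ref{basic_est_for_app}(iii) shows that the coefficient in front of $\phi_t$ is uniformly $L^\infty$-bounded (on the annulus $B_{tR_0}(tq)\setminus B_{tR_0/2}(tq)$ one checks that the bubble density is $O(1)$ because $\Lambda_{t,q}^2|x-tq|^2\sim t^{-2}$ cancels $e^{\lambda_{t,q}}/t^2\sim t^{-4}$ against the square in the denominator), so the first term contributes $O(\|\phi_t\|_{L^\infty(M)})$; the constant subtracted is trivially $\le\|\phi_t\|_{L^\infty(M)}$. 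The remaining two terms are controlled by $\|g_t\|_{L^p}\le\|g_t\|_{Y_{\alpha,t,p}}=o(|\ln t|^{-1})$ and by Lemma \ref{magofcc}, which gives $|c_{t,q,i}|\,\|Z_{t,q,i}\|_{L^p(M\setminus B_{tR_0/2}(tq))}\le |c_{t,q,i}|\|Z_{t,q,i}\|_{Y_{\alpha,t,p}}=O(t)=o(|\ln t|^{-1})$.

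For the weighted $L^2$ term in $\|\phi_t\|_{X_{\alpha,t,p}}$, I use equation \eqref{eq_for_psi}, noting $\Delta_z\overline\phi_t=\Delta_z\psi_t$. The ``bubble'' piece $\tfrac{8\psi_t H_{t,q}/H_{t,q}(q)}{(1+|z|^2)^2}$ satisfies $\|\psi_t\|_{L^\infty}\le 2\|\phi_t\|_{L^\infty}$ by \eqref{bdd_of_psi_t}, and $(1+|z|)^{1+\alpha/2}/(1+|z|^2)^2\in L^2(\mathbb{R}^2)$, so this piece is $O(\|\phi_t\|_{L^\infty(M)})$. For the driving terms, the key identity is $\Lambda_{t,q}^{-2}=C_{t,q}^2 t^2 e^{-\lambda_{t,q}}$, which matches exactly the weight $t^2 e^{-\lambda_{t,q}}$ appearing in the $Y_{\alpha,t,p}$ norm; hence
\[
\bigl\|\Lambda_{t,q}^{-2}\overline{g_t}(z)(1+|z|)^{1+\frac{\alpha}{2}}\bigr\|_{L^2(B_{\Gamma_{t,q}}(0))}=O(1)\|g_t\|_{Y_{\alpha,t,p}}=o(|\ln t|^{-1}),
\]
and analogously $\Lambda_{t,q}^{-2}|c_{t,q,i}|\|\overline{Z_{t,q,i}}(1+|z|)^{1+\alpha/2}\|_{L^2}\le |c_{t,q,i}|\|Z_{t,q,i}\|_{Y_{\alpha,t,p}}=O(t)$ by Lemma \ref{magofcc}. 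Summing all four estimates gives $\|\phi_t\|_{X_{\alpha,t,p}}\le O(\|\phi_t\|_{L^\infty(M)})+o(|\ln t|^{-1})$, which is the second inequality.

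The only genuinely delicate point is the uniform $L^\infty$-boundedness of the coefficient $\rho he^{U_{t,q}-G_t^{(2)}}/\int he^{U_{t,q}-G_t^{(2)}}\mathrm{d}v_g$ on the transition annulus $B_{tR_0}(tq)\setminus B_{tR_0/2}(tq)$, which is where the inner and outer asymptotics of Lemma \ref{basic_est_for_app}(iii) must match; everything else is bookkeeping between the scales $\Lambda_{t,q}\sim t^{-2}$, $\Gamma_{t,q}\sim t^{-1}$ and $e^{\lambda_{t,q}}\sim t^{-2}$, together with the already-proved Lemma \ref{magofcc}.
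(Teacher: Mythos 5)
Your proof is correct and follows essentially the same route as the paper's: for the first inequality you split $M$ into inner/outer pieces, apply H\"older on the outside and the rescaling $z=\Lambda_{t,q}(x-tq)$ plus Cauchy--Schwarz against the weight $(1+|z|)^{-1-\alpha/2}$ on the inside, exactly as in the paper's display \eqref{xal1}; for the second you bound the four summands of $\|\phi_t\|_{X_{\alpha,t,p}}$ by substituting the equations \eqref{c} and \eqref{eq_for_psi}, using the $L^\infty$ bound on the bubble density via Lemma \ref{basic_est_for_app}(iii), the identity $\Lambda_{t,q}^{-2}=C_{t,q}^2t^2e^{-\lambda_{t,q}}$ to match the $Y$-norm weight, $\|g_t\|_{Y_{\alpha,t,p}}=o(|\ln t|^{-1})$, and Lemma \ref{magofcc}, which mirrors the paper's \eqref{xal2}--\eqref{xal3}. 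Your explicit check that the bubble density is $O(1)$ on the transition annulus $B_{tR_0}(tq)\setminus B_{tR_0/2}(tq)$ (by comparing $e^{\lambda_{t,q}}/t^2\sim t^{-4}$ against $(1+\Lambda_{t,q}^2|x-tq|^2)^2\sim t^{-4}$) is a point the paper leaves implicit, but it is not a different argument.
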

\begin{proof}
By H\"{o}lder inequality and the change of variables $x=\Lambda_{t,q}^{-1}z+tq\in B_{tR_0}(tq)$, we have
\begin{equation}
\label{xal1}
\begin{aligned}
\int_M|\Delta \phi_t|\mathrm{d}v_g
&\le O(1)\Big(  \| \Delta \phi_t \|_{L^p(M\setminus B_{tR_0}(tq))}+\int_{B_{\Gamma_{t,q}}(0)}|\Delta_z \overline{\phi_t}(z)|\mathrm{d}z\Big)\\
&\le  O(1)\Big(\| \Delta \phi_t \|_{L^p(M\setminus B_{tR_0}(tq))}+\|\Delta_z \overline{\phi_t}(z)(1+|z|)^{1+\frac{\alpha}{2}}\|_{L^2(B_{\Gamma_{t,q}}(0))}\Big)\\
&\le O(1)( \|\phi_t\|_{X_{\alpha,t,p}}). \end{aligned}
\end{equation}
On the other hand, we see that
\begin{equation}
\label{xal2}
\begin{aligned}
\|\overline{\phi}_t(z)\rho(z)\|_{L^2(B_{\Gamma_{t,q}}(0))}+\|\phi_t\|_{L^p(M\setminus B_{\frac{t{R_0}}{2}}(tq))}
\le O(1)( \|\phi_t\|_{L^{\infty}}),
\end{aligned}
\end{equation}
where we used $\|\rho\|_{ L^2(B_{\Gamma_{t,q}}(0))}=O(1).$
Combined with equation \eqref{c} and \eqref{eq_for_psi}, we get from \eqref{bdd_of_psi_t} and Lemma \ref{basic_est_for_app}-(iii),
\begin{equation}
\label{xal3}
\begin{aligned}
&\|\Delta\overline{\phi}_t(z)(1+|z|)^{1+\frac{\alpha}{2}}\|_{L^2(B_{\Gamma_{t,q}}(0))}
+\|\Delta \phi_t\|_{L^p(M\setminus B_{\frac{t{R_0}}{2}}(tq))}\\
&\le O(1)\Bigg(\left\|\left(\frac{\psi_t(z)}{(1+|z|^2)^2}\right)(1+|z|)^{1+\frac{\alpha}{2}}\right\|_{L^2(B_{\Gamma_{t,q}}(0))}\\
&\quad+\left\|\frac{h(x)e^{U_{t,q}(x)-G_t^{(2)}(x)}}{\int_Mh e^{U_{t,q} -G_t^{(2)} }\mathrm{d}v_g}\Big(\phi_t(x)-\frac{\int_Mh e^{U_{t,q} -G_t^{(2)} }\phi_t \mathrm{d}v_g}{\int_Mh e^{U_{t,q} -G_t^{(2)} }\mathrm{d}v_g}\Big)\right\|_{L^p(M\setminus B_{\frac{t{R_0}}{2}}(tq))}\\
&\quad+\|g_{t}\|_{Y_{\alpha,t,p}}+\sum_{i=1}^2|c_{t,q,i}|\|Z_{t,q,i}\|_{Y_{\alpha,t,p}}\Bigg)\\
&\le O(1)\Big( \|\phi_t\|_{L^{\infty}(M)}+\|g_{t}\|_{Y_{\alpha,t,p}}+\sum_{i=1}^2|c_{t,q,i}|\|Z_{t,q,i}\|_{Y_{\alpha,t,p}}\Big).
\end{aligned}
\end{equation}
By \eqref{xal2}- \eqref{xal3}, we get $$\|\phi_t\|_{X_{\alpha,t,p}}\le O(1)\Big(\|\phi_t\|_{L^{\infty}(M)}+\|g_{t}\|_{Y_{\alpha,t,p}}
+\sum_{i=1}^2|c_{t,q,i}|\|Z_{t,q,i}\|_{Y_{\alpha,t,p}}\Big).$$
Together with $\|g_{t}\|_{Y_{\alpha,t,p}}=o(|\ln t|^{-1})$ and Lemma \ref{magofcc}, we get Lemma \ref{phitalphat}. \end{proof}
By the Green representation formula and Lemma \ref{estfory}, we  compare the differences for the value of $\phi_t$ in different regions.
\begin{lemma}
\label{diff_of_phi_t}
(i) If  $|x-tq|\le |x'-tq|$ and $x, x'\in B_{\mathbf{r}_0}(0)\setminus B_{\frac{t{R_0}}{2}}(tq)$, then
\begin{equation*}
\begin{aligned}
&|\phi_t(x)-\phi_t(x')|\\&\le
O(1)\Big(\Big|\ln\frac{ |x'-tq|}{| x-tq|}\int_{B_{\Gamma_{t,q}}(0)}\Delta\psi_t(z)\mathrm{d}z\Big|
+|x'-tq|^{\frac{2(p-1)}{p}}|\ln|x'-tq||\Big)\\
&+O(\lambda_{t,q}e^{-\frac{\alpha\lambda_{t,q}}{4}}).
\end{aligned}
\end{equation*}



(ii) For $x\in  B_{\frac{t{R_0}}{2}}(tq)\setminus B_{t^2R_0} (tq)$,

\begin{equation*}
\begin{aligned}
&|\phi_t(x)-\phi_t(tq)|\\&\le O(1)\Big(
\Big|\int_{B_{\Gamma_{t,q}}(0)}\ln|z|\Delta\psi_t(z)\mathrm{d}z\Big|
+\Big|\ln\Big(\Lambda_{t,q}|x-tq|\Big)\int_{B_{\Gamma_{t,q}}(0)}\Delta\psi_t(z)\mathrm{d}z\Big|\Big)\\
&+O(1)\Big(t^{\frac{2(p-1)}{p}}|\ln t|+\Big(\frac{|x-tq|}{t^2}\Big)^{-\frac{\alpha}{2}}\ln\Big(\frac{|x-tq|}{t^2}\Big)\Big).
\end{aligned}
\end{equation*}
\end{lemma}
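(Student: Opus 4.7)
The plan is to invoke the Green's representation formula. Since $\int_M \phi_t\,\mathrm{d}v_g=0$, for any $x\in M$ we have
\[\phi_t(x)=-\int_M G(x,\zeta)\Delta_\zeta\phi_t(\zeta)\,\mathrm{d}v_g(\zeta),\]
so that for any reference point $x^\sharp\in M$,
\[\phi_t(x)-\phi_t(x^\sharp)=-\int_M\bigl[G(x,\zeta)-G(x^\sharp,\zeta)\bigr]\Delta_\zeta\phi_t(\zeta)\,\mathrm{d}v_g(\zeta).\]
Taking $x^\sharp=x'$ in part (i) and $x^\sharp=tq$ in part (ii), I split the right-hand side into the inner bubble region $B_{tR_0}(tq)$ and the outer region $M\setminus B_{tR_0}(tq)$, estimating each piece separately.

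\medskip

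\noindent\textbf{Inner contribution.} I rescale via $\zeta=\Lambda_{t,q}^{-1}z+tq$, $z\in B_{\Gamma_{t,q}}(0)$, using \eqref{laplace_of_psi_t} to convert $\Delta_x\phi_t$ into $\Delta_z\psi_t$. Writing $G(x,\zeta)=-\frac{1}{2\pi}\ln|x-\zeta|+R(x,\zeta)$, the inner logarithmic piece becomes $\frac{1}{2\pi}\int_{B_{\Gamma_{t,q}}(0)}(\ln|y-z|-\ln|y^\sharp-z|)\Delta_z\psi_t(z)\,\mathrm{d}z$, where $y:=\Lambda_{t,q}(x-tq)$ and $y^\sharp:=\Lambda_{t,q}(x^\sharp-tq)$. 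The key algebraic identity is
\[\ln|y-z|-\ln|y^\sharp-z|=\bigl[\ln|y-z|-\ln|y|\bigr]-\bigl[\ln|y^\sharp-z|-\ln|y^\sharp|\bigr]+\ln|y|-\ln|y^\sharp|,\]
and in part (ii), since $y^\sharp=0$, one substitutes instead $\ln|y-z|-\ln|z|=[\ln|y-z|-\ln|y|]+\ln|y|-\ln|z|$. Lemma \ref{estfory} bounds each $[\ln|y-z|-\ln|y|]$-type piece by $C|y|^{-\alpha/2}(\ln|y|+1)\|\phi_t\|_{X_{\alpha,t,p}}$, provided $|y|\ge 2$. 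In part (i), $|y|,|y^\sharp|\ge\Gamma_{t,q}/2\sim e^{\lambda_{t,q}/2}$, producing the $O(\lambda_{t,q}e^{-\alpha\lambda_{t,q}/4})$ error. In part (ii), $|y|\ge\Lambda_{t,q}t^2R_0\ge 2$ for $R_0$ chosen large enough (using \eqref{mag} and the explicit form of $\lambda_{t,q}$), producing the $(|x-tq|/t^2)^{-\alpha/2}\ln(|x-tq|/t^2)$ error. The residual constant pieces $[\ln|y|-\ln|y^\sharp|]\int(-\Delta_z\psi_t)\,\mathrm{d}z$ in (i), respectively $\ln|y|\int(-\Delta_z\psi_t)\,\mathrm{d}z$ and $-\int\ln|z|(-\Delta_z\psi_t)\,\mathrm{d}z$ in (ii), are exactly the leading terms in the claimed bounds. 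The smooth piece $R(x,\zeta)-R(x^\sharp,\zeta)=O(|x-x^\sharp|)$, once paired with $\int|\Delta\phi_t|\,\mathrm{d}v_g=O(1)$ (Lemma \ref{phitalphat}), is absorbed into the error classes.

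\medskip

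\noindent\textbf{Outer contribution.} For $\zeta\in M\setminus B_{tR_0}(tq)$, I split $-\Delta_\zeta\phi_t=f_{\mathrm{in}}+f_{\mathrm{out}}$ via a cutoff, so that the outer integral reduces to a Newtonian-type potential of $f_{\mathrm{out}}\in L^p(M)$. Combining the bound $\|f_{\mathrm{out}}\|_{L^p}=O(1)$ from Lemma \ref{phitalphat} with the standard Morrey-type estimate for the Newton potential on a compact two-dimensional surface yields
\[\bigl|Nf_{\mathrm{out}}(x)-Nf_{\mathrm{out}}(x^\sharp)\bigr|\le C\,|x-x^\sharp|^{2(p-1)/p}\bigl(1+|\ln|x-x^\sharp||\bigr)\]
uniformly for $p\in(1,2]$ (the logarithmic factor being needed only at the endpoint $p=2$, where one computes $\|G(x,\cdot)-G(x^\sharp,\cdot)\|_{L^{p'}}$ directly using the gradient estimate on $G$). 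Using $|x-x^\sharp|\le 2|x'-tq|$ in (i) and $|x-tq|\le tR_0/2$ in (ii), this delivers the stated $|x'-tq|^{2(p-1)/p}|\ln|x'-tq||$ and $t^{2(p-1)/p}|\ln t|$ remainders.

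\medskip

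\noindent\textbf{Main obstacle.} The technical heart of the argument is the bookkeeping in the inner region: one must carry out the algebraic splitting of $\ln|y-z|-\ln|y^\sharp-z|$ so that Lemma \ref{estfory} applies in the respective ranges $|y|\ge\Gamma_{t,q}/2$ for (i) and $|y|\ge R_0$ for (ii), while simultaneously isolating the resonant quantities $\int\Delta_z\psi_t\,\mathrm{d}z$ and $\int\ln|z|\,\Delta_z\psi_t\,\mathrm{d}z$ that appear on the right-hand side of the claim. A secondary subtlety is the switch between the two algebraic splittings needed at $y^\sharp=0$ in part (ii) and the verification that the contribution of the regular part $R$ of the Green's function, as well as the logarithmic endpoint in the Newton-potential estimate, always fall into one of the displayed error classes.
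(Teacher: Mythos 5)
Your proposal is correct and follows essentially the same strategy as the paper: Green's representation formula, the split into the inner bubble $B_{tR_0}(tq)$ (where you rescale and apply Lemma \ref{estfory} after the add-and-subtract trick that isolates $\int\Delta\psi_t$ and $\int\ln|z|\Delta\psi_t$) and the outer region (handled by H\"older). The only minor variation is that you package the outer estimate as a Morrey-type modulus-of-continuity bound for the Newtonian potential, whereas the paper carries out the equivalent computation by hand -- splitting the outer annulus at radius $2|x'-tq|$, using the mean value theorem on the far piece and H\"older with $\|\ln|\cdot|\|_{L^{p'}}$ on the near piece -- but the two yield the same bound.
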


\begin{proof}
\textit{Step 1}.
By the Green representation formula,  we have for any $x, x'\in B_{\mathbf{r}_0}(0)$,
\begin{equation}
\begin{aligned}
\label{after_green_representation1}
\phi_t(x)-\phi_t(x')=\int_M(G(x',\zeta)-G(x,\zeta)) \Delta\phi_t(\zeta) \mathrm{d}\zeta.
\end{aligned}
\end{equation}
By Lemma \ref{phitalphat}, we see that
\begin{equation}
\begin{aligned}
\label{after_green_representation2}
&\int_{M\setminus B_{2\mathbf{r}_0}(0)} (G(x,\zeta)-G(x',\zeta))\Delta\phi_t(\zeta)\mathrm{d}\zeta
+\int_{ B_{2\mathbf{r}_0}(0)} (R(x,\zeta)-R(x',\zeta)) \Delta\phi_t(\zeta) \mathrm{d}\zeta\\
&\le O(1)(|x-x'|\|\phi_t\|_{X_{\alpha,t,p}})\le O(1)(|x-x'|).
\end{aligned}
\end{equation}


\noindent \textit{Step 2}. Suppose that $|x-tq|\le |x'-tq|$  and $x, x'\in B_{\mathbf{r}_0}(0)\setminus B_{\frac{t{R_0}}{2}}(tq)$. By H\"{o}lder inequality, we see that for some  $\theta\in(0,1)$,
\begin{equation}
\begin{aligned}
\label{after_green_representation3}
&\Big|\int_{  B_{2\mathbf{r}_0}(0)\setminus B_{ t{R_0} }(tq)}\ln\left(\frac{|x'-\zeta|}{|x-\zeta|}\right) \Delta\phi_t(\zeta) \mathrm{d}\zeta\Big|\\
&\le\int_{  B_{2\mathbf{r}_0}(0)\setminus B_{ 2|x'-tq|} (tq)}\frac{ |x'-x| }{\theta |x'-\zeta|+(1-\theta)|x-\zeta|} |\Delta\phi_t(\zeta)| \mathrm{d}\zeta\\
&\quad+\int_{  B_{ 2|x'-tq|} (tq)\setminus B_{ t{R_0} }(tq)}(|\ln|x'-\zeta||+|\ln|x-\zeta||)|\Delta\phi_t(\zeta)| \mathrm{d}\zeta\\
&\le O(1)\Big( |x'-tq|^{\frac{2(p-1)}{p}}|\ln|x'-tq||\|\Delta\phi_t \|_{L^p( B_{2\mathbf{r}_0}(0)\setminus B_{t{R_0}}(tq))}\Big)\\
&\le O(1)(|x'-tq|^{\frac{2(p-1)}{p}}|\ln|x'-tq||),
\end{aligned}
\end{equation}
where we used $|x-x'|\leq 2|x'-tq|$. By the change of variables, we also see that
\begin{equation}
\begin{aligned}
\label{after_green_representation4}
&\int_{  B_{ t{R_0} }(tq)}\ln\left(\frac{|x'-\zeta|}{|x-\zeta|}\right) \Delta\phi_t(\zeta) \mathrm{d}\zeta\\
&=\int_{B_{\Gamma_{t,q}}(0)}\ln\left(\frac{|x'-tq-\Lambda_{t,q}^{-1}z|}{|x-tq-\Lambda_{t,q}^{-1}z|}\right)\Delta\psi_t(z) \mathrm{d}z\\
&=\int_{B_{\Gamma_{t,q}}(0)}\ln\left(\frac{\left|\Lambda_{t,q}(x'-tq)-z\right|}{\left|\Lambda_{t,q}(x-tq)-z\right|}\right) \Delta\psi_t(z)\mathrm{d}z.
\end{aligned}
\end{equation}
Let
$$\mathfrak{Z}_{t,q}=\ln\left(\frac{\left|\Lambda_{t,q}(x'-tq)\right|}{\left|\Lambda_{t,q}(x-tq)\right|}\right) \int_{B_{\Gamma_{t,q}}(0)}\Delta\psi_t  \mathrm{d}z=\ln \left(\frac{| x'-tq |}{ | x-tq |}\right) \int_{B_{\Gamma_{t,q}}(0)}\Delta\psi_t \mathrm{d}z.$$
By  adding  and  substituting
the same constant $\mathfrak{Z}_{t,q}$ in the last line of \eqref{after_green_representation4}, we get
\begin{equation}
\begin{aligned}
\label{after_green_representation45}
&\int_{B_{ t{R_0} }(tq)}\ln\left(\frac{|x'-\zeta|}{|x-\zeta|}\right) \Delta\phi_t(\zeta) \mathrm{d}\zeta\\
&=\mathfrak{Z}_{t,q}\\&+\int_{B_{\Gamma_{t,q}}(0)}
\left\{\ln\left(\frac{\left|\Lambda_{t,q}(x'-tq)-z\right|}{\left|\Lambda_{t,q}(x'-tq)\right|}\right)
-\ln\left(\frac{\left|\Lambda_{t,q}(x-tq)-z\right|}{\left|\Lambda_{t,q}(x-tq)\right|}\right)\right\}\Delta\psi_t(z) \mathrm{d}z.
\end{aligned}
\end{equation}
Since
$$\frac12\Lambda_{t,q}R_0t\le \left|\Lambda_{t,q}(x-tq)\right|\le \left|\Lambda_{t,q}(x'-tq)\right|,$$
by applying Lemma \ref{estfory}, we see that
\begin{equation}
\begin{aligned}
\label{after_green_representation5}
&\int_{B_{ t{R_0} }(tq)}\ln\left(\frac{|x'-\zeta|}{|x-\zeta|}\right) \Delta\phi_t(\zeta) \mathrm{d}\zeta
=\mathfrak{Z}_{t,q}+O(\lambda_{t,q}e^{-\frac{\alpha\lambda_{t,q}}{4}}).
\end{aligned}
\end{equation}
By \eqref{after_green_representation1}-\eqref{after_green_representation5}, we obtain Lemma \ref{diff_of_phi_t}-(i).


\medskip\noindent

\noindent\textit{Step 3}. Suppose that $x\in  B_{\frac{t{R_0}}{2}}(tq)\setminus B_{t^2R_0} (tq)$. By H\"{o}lder inequality, we see that for some  $\theta\in(0,1)$,
\begin{equation}
\begin{aligned}
\label{after_green_representation6}&
\Big|\int_{  B_{2\mathbf{r}_0}(0)\setminus B_{ t{R_0} }(tq)}\ln\left(\frac{|tq-\zeta|}{|x-\zeta|}\right) \Delta\phi_t(\zeta) \mathrm{d}\zeta\Big|\\
&\le\int_{  B_{2\mathbf{r}_0}(0)\setminus B_{ t{R_0} }(tq)}\frac{ |tq-x| }{\theta |tq-\zeta|+(1-\theta)|x-\zeta|} |\Delta\phi_t(\zeta)| \mathrm{d}\zeta\\
&\le O(1)\Big( |x-tq|t^{\frac{p-2}{p}}|\ln t|\|\Delta\phi_t \|_{L^p( B_{2\mathbf{r}_0}(0)\setminus B_{ t{R_0} }(tq))}\Big)
\le O(1)( t^{\frac{2(p-1)}{p}}|\ln t|).
\end{aligned}
\end{equation}
By the change of variables, we also see that
\begin{equation}
\begin{aligned}
\label{after_green_representation7}&
\int_{  B_{ t{R_0} }(tq)}\ln\Big(\frac{|tq-\zeta|}{|x-\zeta|}\Big) \Delta\phi_t(\zeta) \mathrm{d}\zeta\\
&=\int_{B_{\Gamma_{t,q}}(0)}\ln\left(\frac{|\Lambda_{t,q}^{-1}z|}{|x-tq-\Lambda_{t,q}^{-1}z|}\right) \Delta\psi_t(z) \mathrm{d}z\\
&=\int_{B_{\Gamma_{t,q}}(0)}\ln\left(\frac{ |z |}{\left|\Lambda_{t,q}(x-tq)- z\right|}\right) \Delta\psi_t(z) \mathrm{d}z.
\end{aligned}
\end{equation}
Let
$$\Upsilon_{t,q}=\left(\ln\left|\Lambda_{t,q}(x-tq)\right|\int_{B_{\Gamma_{t,q}}(0)}\Delta\psi_t \mathrm{d}z\right).$$
By adding and substituting the same constant $\Upsilon_{t,q}$  in the last line of \eqref{after_green_representation7}, we get
\begin{equation}
\begin{aligned}
\label{after_green_representation71}
&\int_{  B_{ t{R_0} }(tq)}\ln\left(\frac{|tq-\zeta|}{|x-\zeta|}\right) \Delta\phi_t(\zeta) \mathrm{d}\zeta\\
&=\int_{B_{\Gamma_{t,q}}(0)}\left\{\ln|z|
-\ln\left(\frac{\left|\Lambda_{t,q}(x-tq)-z\right|}{\left|\Lambda_{t,q}(x-tq)\right|}\right)\right\}
\Delta\psi_t(z) \mathrm{d}z-\Upsilon_{t,q}.
\end{aligned}
\end{equation}
We note that $\left|\Lambda_{t,q}(x-tq)\right|\ge \Lambda_{t,q}R_0t^2\ge c_0>0$ for some constant $c_0>0$  independent of $t>0$.
By applying  Lemma \ref{estfory}, we see that
\begin{equation}
\begin{aligned}
\label{after_green_representation8}
&\int_{  B_{ t{R_0} }(tq)}\ln\left(\frac{|tq-\zeta|}{|x-\zeta|}\right) \Delta\phi_t(\zeta) \mathrm{d}\zeta\\
&=\int_{B_{\Gamma_{t,q}}(0)} \ln | z|\Delta\psi_t(z) \mathrm{d}z
-\ln\left|\Lambda_{t,q}(x-tq)\right|\int_{B_{\Gamma_{t,q}}(0)}\Delta\psi_t(z)\mathrm{d}z\\ 
&\quad+O(1)\Big(\frac{|x-tq|}{t^2}\Big)^{-\frac{\alpha}{2}}\left|\ln \frac{|x-tq|}{t^2}\right|.
\end{aligned}\end{equation}By \eqref{after_green_representation1}-\eqref{after_green_representation2} and \eqref{after_green_representation6}-\eqref{after_green_representation8}, we obtain Lemma \ref{diff_of_phi_t}-(ii).
 \end{proof}

\begin{lemma}
\label{vanish_outside}
(i) $\phi_t\to 0\ \textrm{in}\  C^{0,\beta}_{\textrm{loc}}(M\setminus\{0\}) $ as $t\to0$.

(ii) $\|\phi_t\|_{L^{\infty}(M\setminus B_{\frac{t{R_0}}{2}}(tq))}=o(1)$.

(iii) $\frac{\int_Mh e^{U_{t,q} -G_t^{(2)} }\phi_t \mathrm{d}v_g}{\int_Mh e^{U_{t,q} -G_t^{(2)} }\mathrm{d}v_g}=o(1)$.
\end{lemma}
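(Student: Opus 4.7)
The plan is to establish (i)--(iii) in sequence, with the blow-up procedure forcing the subsequential limit $\phi_0$ to vanish via the non-degeneracy of $w$, after which a Green-representation comparison (Lemma \ref{diff_of_phi_t}) upgrades pointwise convergence to uniform convergence on $M\setminus B_{tR_0/2}(tq)$, and statement (iii) then drops out of Lemma \ref{int_limit_outside}.

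For (i), I will combine Lemma \ref{phitalphat} with equation \eqref{c} to obtain, on every compact $K\Subset M\setminus\{0\}$, a uniform $W^{2,p}(K)$ bound on $\phi_t$. Sobolev embedding will then give a subsequential $C^{0,\beta}_{\mathrm{loc}}(M\setminus\{0\})$ limit $\phi_0$, satisfying $\|\phi_0\|_{L^\infty(M)}\le 1$ and $\int_M\phi_0\,dv_g=0$. Next I pass to the limit term by term in \eqref{c}: Lemma \ref{basic_est_for_app}(iii) transforms the coefficient into $(\rho-8\pi)h e^{w}/\int_M h e^{w}\,dv_g$; dominated convergence together with Lemma \ref{int_limit_outside} identifies the average correction as $\int_M h e^{w}\phi_0\,dv_g/\int_M h e^{w}\,dv_g$; and the right-hand side vanishes because $\|g_t\|_{Y_{\alpha,t,p}}=o(|\ln t|^{-1})$, Lemma \ref{magofcc} controls the $c_{t,q,i}Z_{t,q,i}$ contribution, and each $Z_{t,q,i}$ is supported in $B_{tR_0}(tq)$. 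The resulting equation for $\phi_0$ is precisely \eqref{linear_pr_outside}, so the non-degeneracy hypothesis forces $\phi_0\equiv 0$, and uniqueness of the limit upgrades the conclusion from a subsequence to the whole family.

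The main obstacle is (ii), because the annulus $B_{\mathbf{r}_0/2}(0)\setminus B_{tR_0/2}(tq)$ is not a compact subset of $M\setminus\{0\}$ and so (i) does not directly control $\phi_t$ there. My plan is first to sharpen (i) to the improved bound $\|\Delta\phi_t\|_{L^p(M\setminus B_{tR_0}(tq))}=o(1)$: restricting equation \eqref{c} to this region kills the $Z_{t,q,i}$ contribution, the coefficient is bounded by Lemma \ref{basic_est_for_app}(iii), dominated convergence applied via (i) yields $\phi_t\to 0$ in $L^p(M)$, and Lemma \ref{int_limit_outside} combined with the same dominated-convergence argument gives $\bar\phi_t\to 0$. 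Revisiting the derivation of Lemma \ref{diff_of_phi_t}(i) with this improved estimate in place of $\|\Delta\phi_t\|_{L^p}=O(1)$, the term $|x'-tq|^{2(p-1)/p}|\ln|x'-tq||$ picks up an extra $o(1)$ factor; the logarithmic term $|\ln(|x'-tq|/|x-tq|)|\cdot|\int_{B_{\Gamma_{t,q}}(0)}\Delta\psi_t\,dz|$ is $O(|\ln t|)\cdot o(|\ln t|^{-1})=o(1)$ thanks to Lemma \ref{improved_int_limit_of_psi_t}(ii); and the exponential remainder is trivially $o(1)$. Choosing $x'$ on a fixed sphere $\partial B_{\mathbf{r}_0}(0)$, where (i) already yields $\phi_t(x')=o(1)$, then gives $\phi_t(x)=o(1)$ uniformly for $x$ in the annulus, which combined with (i) on $M\setminus B_{\mathbf{r}_0}(0)$ establishes (ii).

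Finally, (iii) will follow immediately from Lemma \ref{int_limit_outside} together with (ii): the leading integral $\int_{M\setminus B_{tR_0}(tq)} h e^{w}\phi_t\,dv_g$ is $o(1)$ because $\|\phi_t\|_{L^\infty(M\setminus B_{tR_0}(tq))}=o(1)$ by (ii) and $\int_M h e^{w}\,dv_g$ is a fixed positive constant, while the error term in Lemma \ref{int_limit_outside} is $o(|\ln t|^{-1})=o(1)$.
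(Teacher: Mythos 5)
Your proof of (i) matches the paper's argument essentially line for line: $W^{2,p}_{\mathrm{loc}}$ bounds from Lemma \ref{phitalphat}, Sobolev embedding, passage to the limit in \eqref{c} via Lemma \ref{basic_est_for_app}(iii), Lemma \ref{int_limit_outside}, Lemma \ref{magofcc} and the smallness of $g_t$, and then the non-degeneracy hypothesis. Your derivation of (iii) from Lemma \ref{int_limit_outside} is also what the paper does.

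However, your argument for (ii) has a genuine gap. You choose $x'$ on a fixed sphere $\partial B_{\mathbf{r}_0}(0)$ and claim that, after improving the $L^p$ bound to $\|\Delta\phi_t\|_{L^p(M\setminus B_{tR_0}(tq))}=o(1)$, the displayed estimate in Lemma \ref{diff_of_phi_t}(i) becomes uniformly $o(1)$. But the bound in Lemma \ref{diff_of_phi_t}(i) is not built solely from the $L^p$ norm of $\Delta\phi_t$ over the annulus. Its derivation also uses \eqref{after_green_representation2}, which contributes a term $O(|x-x'|\,\|\phi_t\|_{X_{\alpha,t,p}})$ coming in part from
\begin{equation*}
\int_{B_{tR_0}(tq)}\bigl(R(x,\zeta)-R(x',\zeta)\bigr)\Delta\phi_t(\zeta)\,\mathrm{d}\zeta,
\qquad\text{where}\qquad
\int_{B_{tR_0}(tq)}|\Delta\phi_t|\,\mathrm{d}\zeta=\int_{B_{\Gamma_{t,q}}(0)}|\Delta\psi_t|\,\mathrm{d}z.
\end{equation*}
Your improved estimate controls $\Delta\phi_t$ only \emph{outside} $B_{tR_0}(tq)$; it says nothing about $\int_{B_{\Gamma_{t,q}}(0)}|\Delta\psi_t|$, and at this stage of the proof that quantity is only $O(1)$, not $o(1)$ --- indeed $\psi_t\to d_0 Y_0$ with $d_0$ not yet shown to vanish (that is precisely what Lemma \ref{vanish_insidew}(i) proves, \emph{after} Lemma \ref{vanish_outside}), and $\int \frac{|Y_0|}{(1+|z|^2)^2}\,\mathrm{d}z\neq 0$. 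With $x'$ on $\partial B_{\mathbf{r}_0}(0)$ and $x$ near $tq$, you have $|x-x'|=O(\mathbf{r}_0)=O(1)$, so this contribution is $O(1)$, not $o(1)$, and the argument does not close. One could try to salvage the approach by writing $R(x,\zeta)-R(x',\zeta)=(R(x,tq)-R(x',tq))+O(|\zeta-tq|)$ and invoking $\int_{B_{\Gamma_{t,q}}(0)}\Delta\psi_t\,\mathrm{d}z=o(|\ln t|^{-1})$ from Lemma \ref{improved_int_limit_of_psi_t}(ii) together with a weighted $L^2$ bound on $\int|z||\Delta\psi_t|$, but you neither notice the need for this nor carry it out. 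The paper avoids the issue entirely by taking $x'=tq+d\vec{e}$ with $d>0$ small but fixed and $t$-independent: then $|x_t-x'|\le 2d$, so the $O(|x-x'|)$ contribution is $O(d)$ and is absorbed into $cd^{2(p-1)/p}|\ln d|$, which is made smaller than $c_0/4$ by choosing $d$ first; the value $\phi_t(tq+d\vec{e})\to\phi_0(d\vec{e})=0$ still follows from (i) since $d\vec{e}\neq 0$.
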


\begin{proof}
(i) Since $\|\phi_t\|_{L^{\infty}(M)}+\|\Delta\phi_t\|_{L^p(M\setminus B_{t{R_0}}(tq))}\le 1$, by the Sobolev imbedding theorem, $\phi_t\to\phi_0$ in $C^{0,\beta}_{\textrm{loc}}(M\setminus\{0\})$. In addition, we get $\|\phi_0\|_{L^\infty(M)}\le 1$ and $\int_M\phi_0\mathrm{d}v_g=0$ from $\|\phi_t\|_{L^\infty(M)}\le 1$ and $\int_M\phi_t\mathrm{d}v_g=0$.
By Lemma \ref{basic_est_for_app}, Lemma  \ref{int_limit_outside}, Lemma \ref{magofcc},  and $\|g_{t}\|_{L^p(M\setminus B_{t{R_0}}(tq))}=o(|\ln t|^{-1})$, the equation \eqref{c} implies
\begin{equation}
\label{eq_for_phi0}
\Delta \phi_0+(\rho-8\pi)\frac{h(x)e^{w(x)}}{\int_M he^w\mathrm{d}v_g}\left(\phi_0-\frac{\int_Mhe^{w}\phi_0\mathrm{d}v_g}{\int_M he^w\mathrm{d}v_g}\right)=0
\ \textrm{in}\ M\setminus\{0\}.
\end{equation}
Since $\|\phi_0\|_{L^{\infty}(M)}\le1$,   $\phi_0$ is smooth near $0$, then we can extend the equation \eqref{eq_for_phi0} to $M$.
By the non-degeneracy assumption for \eqref{eq_for_phi0} and $\int_M\phi_0\mathrm{d}v_g=0$, we get $\phi_0\equiv0$ in $M$ and it proves (i).

\medskip

\noindent (ii) We prove Lemma \ref{vanish_outside}-(ii) by contradiction. Suppose for $t>0$ small,
$$\|\phi_t\|_{L^{\infty}(M\setminus B_{\frac{t{R_0}}{2}}(tq))}=|\phi_t(x_t)|\ge c_0>0,$$
where $c_0>0$ is a constant  independent of $t>0$. By Lemma \ref{vanish_outside}-(i), we have $\lim_{t\to0}|x_t|=0$.
Using Lemma \ref{improved_int_limit_of_psi_t}-(ii) and Lemma \ref{diff_of_phi_t}, we get for any  $d\in (0,\mathbf{r}_0)$,
\begin{equation}
\begin{aligned}
\label{outside_smallarea}
|\phi_t(x_t)-\phi_t(tq+d\vec{e})| \le c d^{\frac{2(p-1)}{p}}|\ln d|  +o(1),
\end{aligned}
\end{equation}where $c>0$ is a constant, independent of $t>0$ and $d\in(0,\mathbf{r}_0)$,
We choose $d>0$ such that $cd^{\frac{2(p-1)}{p}}|\ln d| \le \frac{c_0}{4}$ and fix such $d$. Combined with  Lemma \ref{vanish_outside}-(i), we get   $\lim_{t\to0}|\phi_t(x_t)|\le \frac{c_0}{2}$, which contradicts to $|\phi_t(x_t)|\ge c_0>0$. So  Lemma \ref{vanish_outside}-(ii) holds.

\medskip

\noindent (iii) By Lemma \ref{int_limit_outside}, we have
\begin{equation}
\begin{aligned}
\frac{\int_Mh e^{U_{t,q} -G_t^{(2)} }\phi_t \mathrm{d}v_g}{\int_Mh e^{U_{t,q} -G_t^{(2)} }\mathrm{d}v_g}= \frac{\int_{M\setminus B_{t{R_0}}(tq)}h e^{w}\phi_t \mathrm{d}v_g}{\int_Mh e^{w }\mathrm{d}v_g}+o(1).
\end{aligned}
\end{equation}
Then  Lemma \ref{vanish_outside}-(iii) follows from  Lemma \ref{vanish_outside}-(ii).
\end{proof}

In order to get the rid of the factor $d_0$ from dilations (see Lemma \ref{limitfunctionofpsi_t}), we need to introduce the function
$$\eta_2(z)=\frac{4}{3}\ln(1+|z|^2)\left(\frac{1-|z|^2}{1+|z|^2}\right)+\frac{8}{3(1+|z|^2)},$$
which satisfies
\begin{equation}
\begin{aligned}
\label{eq_test2}
\Delta\eta_2(z)+\frac{8\eta_2(z)}{(1+|z|^2)^2}=\frac{16(1-|z|^2)}{(1+|z|^2)^3}=\frac{16Y_0(z)}{(1+|z|^2)^2}\quad \mbox{in}\ \RN.
\end{aligned}
\end{equation}
The function $\eta_2$ was firstly introduced by Esposito,  Grossi, and  Pistoia in \cite{egp}, and then also used in \cite{emp,f,ly1} later.

\begin{lemma}
\label{vanish_insidew}
(i) $ \psi_t(z) \to 0$ and $\overline{\phi_t}(z) \to 0$ in  $C^{0,\beta}_{\textrm{loc}}(\mathbb{R}^2 )$ as $t\to0$.

(ii) For any fixed constant $R>0$, $\|\phi_t\|_{L^{\infty}(B_{t^2R}(tq))}=o(1)$.

(iii)  $\|\phi_t\|_{L^{\infty}(M)}=o(1)$.
\end{lemma}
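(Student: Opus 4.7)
The lemma consists of three successively stronger statements. Part (i) upgrades the pointwise limit in Lemma \ref{limitfunctionofpsi_t} by forcing $d_0 = 0$, so that $\psi_t \to 0$ and $\overline{\phi_t} \to 0$ in $C^{0,\beta}_{\textrm{loc}}(\mathbb{R}^2)$. Part (ii) promotes smallness into the innermost ball $B_{t^2 R}(tq)$. Part (iii) assembles global $L^\infty$ smallness, which via Lemma \ref{phitalphat} will contradict the normalization \eqref{eqto1} and thereby complete Theorem \ref{thma}.

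For (i), the plan is to test equation \eqref{eq_for_psi} against $\eta_2(z)\overline{\chi_{t,q}}(z)$. After integrating by parts twice and invoking the identity \eqref{eq_test2}, the $8\psi_t\eta_2/(1+|z|^2)^2$ terms cancel, producing the principal quantity
\[
I_t \;:=\; \int_{B_{\Gamma_{t,q}}(0)} \frac{16\, Y_0(z)\, \psi_t(z)\, \overline{\chi_{t,q}}(z)}{(1+|z|^2)^2}\, dz,
\]
which by dominated convergence (using $|\psi_t|\le 2$) and Lemma \ref{limitfunctionofpsi_t} tends to $\tfrac{16\pi}{3}\, d_0$. The perturbation error induced by the $O(e^{-\lambda_{t,q}/2}|z| + t|q| + t^2|\ln t|)$ factor in \eqref{eq_for_psi} is $o(1)$ by integrability of $|\eta_2|/(1+|z|^2)^2$ and $|z|\,|\eta_2|/(1+|z|^2)^2$ on $\mathbb{R}^2$. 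The RHS contribution $\int \Lambda_{t,q}^{-2}(\overline{g_t} + \sum c_{t,q,i}\overline{Z_{t,q,i}})\,\eta_2\,\overline{\chi_{t,q}}\, dz$ is $o(1)$ by Cauchy--Schwarz (using $\eta_2\,(1+|z|)^{-1-\alpha/2} \in L^2(\mathbb{R}^2)$), combined with $\|g_t\|_{Y_{\alpha,t,p}} = o(|\ln t|^{-1})$ and Lemma \ref{magofcc}.

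The chief technical obstacle is the boundary contribution $\int \psi_t\bigl(2\nabla\eta_2\cdot\nabla\overline{\chi_{t,q}} + \eta_2\Delta\overline{\chi_{t,q}}\bigr) dz$ over the transition annulus $A_t = B_{\Gamma_{t,q}}\setminus B_{\Gamma_{t,q}/2}$, where $|\eta_2| = O(|\ln t|)$. From Lemma \ref{vanish_outside}-(ii),(iii) one has $\|\psi_t\|_{L^\infty(A_t)} = o(1)$, which immediately yields $o(1)$ for the gradient piece since $|\nabla\eta_2||\nabla\overline{\chi_{t,q}}| = O(t^2)$ while $|A_t| = O(t^{-2})$. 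The remaining piece is delicate: writing $\eta_2 = -\tfrac{8}{3}\ln|z| + \widetilde{\eta}_2$ with $\widetilde{\eta}_2 = O(|\ln|z||/|z|^2) = O(t^2|\ln t|)$ on $A_t$, the $\widetilde{\eta}_2$ portion contributes $o(1)$ trivially. For the $\ln|z|$ portion, one employs the distributional identity
\[
\int \psi_t\,\ln|z|\,\Delta\overline{\chi_{t,q}}\, dz \;=\; -2\pi\psi_t(0) \;-\; 2\int\psi_t\,\nabla\ln|z|\cdot\nabla\overline{\chi_{t,q}}\, dz \;+\; \int \Delta\psi_t\cdot\ln|z|\,\overline{\chi_{t,q}}\, dz,
\]
substitutes \eqref{eq_for_psi} for $\Delta\psi_t$, and carefully tracks each limit using $\psi_t(0)\to d_0$ and Lemma \ref{improved_int_limit_of_psi_t}. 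The precise balance of these terms against $I_t$ is what is expected to force $d_0 = 0$; the key is to break a natural algebraic cancellation at the right order by exploiting $\|g_t\|_{Y_{\alpha,t,p}} = o(|\ln t|^{-1})$ together with Lemma \ref{improved_int_limit_of_psi_t}-(ii). Once $d_0 = 0$, Lemma \ref{vanish_outside}-(iii) gives $\overline{\phi_t} = \psi_t + c_t \to 0$.

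For (ii), if $x\in B_{t^2R}(tq)$ then $z = \Lambda_{t,q}(x-tq)$ satisfies $|z| \le \Lambda_{t,q}\, t^2 R = O(R)$ by \eqref{mag}, so $z$ lies in a fixed compact set on which $\overline{\phi_t}(z) = o(1)$ by (i). For (iii), decompose $M = (M\setminus B_{tR_0/2}(tq)) \cup (B_{tR_0/2}(tq)\setminus B_{t^2R_0}(tq)) \cup B_{t^2R_0}(tq)$. The outer piece is handled by Lemma \ref{vanish_outside}-(ii); the inner ball by (ii) with $R = R_0$. For the intermediate annulus, Lemma \ref{diff_of_phi_t}-(ii) bounds $|\phi_t(x) - \phi_t(tq)|$ by $O\bigl(|\int \ln|z|\Delta\psi_t|\bigr) + O\bigl(|\ln(\Lambda_{t,q}|x-tq|)|\cdot|\int \Delta\psi_t|\bigr)$ plus residuals that are $o(1)$: from (i), dominated convergence gives $\int \ln|z|\Delta\psi_t\, dz = o(1)$ (the limit integral vanishes because $\psi_t\to 0$), while $|\ln(\Lambda_{t,q}|x-tq|)|\cdot|\int \Delta\psi_t| = O(|\ln t|)\cdot o(|\ln t|^{-1}) = o(1)$ by Lemma \ref{improved_int_limit_of_psi_t}-(ii); since $\phi_t(tq) = o(1)$ by (i), this yields $o(1)$ smallness on the annulus. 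Combining the three regions gives $\|\phi_t\|_{L^\infty(M)} = o(1)$, and Lemma \ref{phitalphat} then upgrades to $\|\phi_t\|_{X_{\alpha,t,p}} = o(1)$, contradicting \eqref{eqto1}.
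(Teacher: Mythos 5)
The structure of your argument for parts (ii) and (iii) matches the paper, but part (i) contains a genuine gap at exactly the point you flag as ``the chief technical obstacle.'' Your plan is to split $\eta_2 = -\tfrac{8}{3}\ln|z| + \widetilde{\eta}_2$ and handle the $\ln|z|$ portion via the distributional identity, then ``carefully track each limit.'' If you actually carry this out by dominated convergence, the two sides of \eqref{int_psi_with_kernal} cancel identically. Indeed, with $\psi_t\to d_0 Y_0$ in $C^{0,\beta}_{\mathrm{loc}}$, $\psi_t(0)\to d_0$, and $\int_{\mathbb{R}^2}\frac{Y_0^2}{(1+|z|^2)^2} = \tfrac{\pi}{3}$, $\int_{\mathbb{R}^2}\frac{Y_0\ln|z|}{(1+|z|^2)^2}=-\tfrac{\pi}{2}$, one gets from your identity
\begin{equation*}
\lim_{t\to0}\int_{A_t}\psi_t\,\eta_2\,\Delta\overline{\chi_{t,q}}\,\mathrm{d}z
= -\tfrac{8}{3}\Bigl(-2\pi d_0 - 8 d_0\textstyle\int\frac{Y_0\ln|z|}{(1+|z|^2)^2}\Bigr) = -\tfrac{16\pi}{3}d_0,
\end{equation*}
which coincides exactly with the limit $-\tfrac{16\pi}{3}d_0$ of the left-hand side of \eqref{int_psi_with_kernal}. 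So the relation reduces to $0=0$ and gives no constraint on $d_0$. You acknowledge a ``natural algebraic cancellation'' and say the key is to ``break it at the right order,'' but you give no mechanism for doing so; this is precisely the missing idea, not a routine detail.

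The paper's proof avoids passing to the limit through $\psi_t(0)$ altogether. Instead it subtracts the constant $\psi_t(\Lambda_{t,q}tR_0\vec{e})$ inside the boundary integral (this is the role of $\mathfrak{L}_{t,q}$ in \eqref{int_psi_with_kernal2}), then observes that (a) $\int\bigl(2\nabla\eta_2\cdot\nabla\overline{\chi_{t,q}}+\eta_2\Delta\overline{\chi_{t,q}}\bigr)\mathrm{d}z = \int\nabla\eta_2\cdot\nabla\overline{\chi_{t,q}}\,\mathrm{d}z = O(1)$ after one integration by parts, while $\psi_t(\Lambda_{t,q}tR_0\vec{e})=o(1)$ by Lemma~\ref{vanish_outside}; and (b) by Lemma~\ref{diff_of_phi_t}-(i) together with Lemma~\ref{improved_int_limit_of_psi_t}-(ii), the oscillation $|\phi_t(x)-\phi_t(tR_0\vec{e}+tq)|$ is $o(|\ln t|^{-1})$ on $B_{tR_0}(tq)\setminus B_{tR_0/2}(tq)$, which beats the $O(|\ln t|)$ bound \eqref{integration_by_parts2} on $\int_{A_t}(|\eta_2\Delta\overline{\chi_{t,q}}|+2|\nabla\eta_2\cdot\nabla\overline{\chi_{t,q}}|)\,\mathrm{d}z$. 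This shows the boundary term is genuinely $o(1)$, so $d_0=0$ follows from the left-hand side alone. You would need to incorporate this subtraction-plus-oscillation argument (or a substitute of comparable strength, e.g.\ an improved rate $\psi_t(0)=o(1)$ or a sharper cancellation at the next order in the distributional identity) before your proof of (i) is complete. The remainder of your outline for (ii) and (iii) then goes through as you describe.
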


\begin{proof}
(i). Multiplying $\eta_2(z)\overline{\chi_{t,q}}(z)$ on \eqref{eq_for_psi} and using the integration by parts, we see that
\begin{equation}
\label{last1}
\begin{aligned}
&\int_{B_{\Gamma_{t,q}}(0)}\psi_t(z)\Big\{\Delta(\eta_2(z)\overline{\chi_{t,q}}(z))+\frac{8(1+O(t|z|+t|q|+t^2|\ln t|))}{(1+|z|^2)^2} \eta_2(z)\overline{\chi_{t,q}}(z)\Big\}\mathrm{d}z\\
&=\int_{B_{\Gamma_{t,q}}(0)}\Lambda_{t,q}^{-2}\overline{g_{t}}(z) \eta_2(z)\overline{\chi_{t,q}}(z)\mathrm{d}z.
\end{aligned}
\end{equation}
By H\"{o}lder inequality, we  see that
\begin{equation}
\label{last2}
\int_{B_{\Gamma_{t,q}}(0)}
\Lambda_{t,q}^{-2}|\overline{g_{t}}(z) \eta_2(z)\overline{\chi_{t,q}}(z)|  \mathrm{d}z\le O(1)(\|g_{t}\|_{Y_{\alpha,t,p}})=o(|\ln t|^{-1}).
\end{equation}
By the equation \eqref{eq_test2}, we see that
\begin{equation}
\label{last3}
\begin{aligned}
&\int_{B_{\Gamma_{t,q}}(0)}\psi_t(z)\Big\{(\Delta
\eta_2(z))\overline{\chi_{t,q}}(z) +\frac{8}{(1+|z|^2)^2} \eta_2(z)\overline{\chi_{t,q}}(z)\Big\}\mathrm{d}z\\
&= \int_{B_{\Gamma_{t,q}}(0)}\frac{16\psi_t(z)\overline{\chi_{t,q}}(z) (1-|z|^2)}{(1+|z|^2)^3}\mathrm{d}z.
\end{aligned}
\end{equation}
By \eqref{bdd_of_psi_t}, we have $\|\psi_t\|_{L^{\infty}(B_{\Gamma_{t,q}}(0))}\le O(1)$, and thus
\begin{equation}
\label{last4}
\begin{aligned}
&\int_{B_{\Gamma_{t,q}}(0)} \frac{|\psi_t(z)|(t|z|+t|q|+t^2|\ln t|)}{(1+|z|^2)^2} \eta_2(z)\overline{\chi_{t,q}}(z)\mathrm{d}z=O(t).
\end{aligned}
\end{equation}
From \eqref{last1}-\eqref{last4} and $\textrm{supp}\left(\nabla\overline{\chi_{t,q}}\right)\subseteq B_{\Gamma_{t,q}}(0)\setminus B_{\frac{\Gamma_{t,q}}{2}}(0)$, we get that
\begin{equation}
\label{int_psi_with_kernal}
\begin{aligned}
& -\int_{B_{\Gamma_{t,q}}(0)}\frac{16\psi_t(z)\overline{\chi_{t,q}}(z)(1-|z|^2)}{(1+|z|^2)^3}\mathrm{d}z\\
&= \int_{B_{\Gamma_{t,q}}(0)\setminus B_{\frac{\Gamma_{t,q}}{2}}(0) }\psi_t(z)\left(2\nabla\eta_2(z)\cdot\nabla\overline{\chi_{t,q}}(z)+\eta_2(z)(\Delta\overline{\chi_{t,q}}(z))\right)\mathrm{d}z +o(1).
\end{aligned}
\end{equation}
Let $$\mathfrak{L}_{t,q}=\psi_t(\Lambda_{t,q}tR_0\vec{e})\int_{B_{\Gamma_{t,q}}(0)}\left(2\nabla\eta_2 \cdot\nabla\overline{\chi_{t,q}} +\eta_2  \Delta\overline{\chi_{t,q}} \right)\mathrm{d}z.$$
By adding and  substituting the same constant $\mathfrak{L}_{t,q}$ in the second line of \eqref{int_psi_with_kernal}, we see that
\begin{equation}
\label{int_psi_with_kernal2}
\begin{aligned}
&\int_{B_{\Gamma_{t,q}}(0)}\psi_t(z)\Big(2\nabla\eta_2(z)\cdot\nabla\overline{\chi_{t,q}}(z)+\eta_2(z)(\Delta\overline{\chi_{t,q}}(z))\Big)\mathrm{d}z\\
&= \int_{B_{\Gamma_{t,q}}(0)}\left(\psi_t(z)-\psi_t(\Lambda_{t,q}tR_0\vec{e})\right)
\Big(2\nabla\eta_2\cdot\nabla\overline{\chi_{t,q}}+\eta_2(\Delta\overline{\chi_{t,q}})\Big)\mathrm{d}z+\mathfrak{L}_{t,q}\\
&= \int_{B_{\Gamma_{t,q}}(0)}\left(\phi_t(\Lambda_{t,q}^{-1}z+tq)-\phi_t(t{R_0}\vec{e}+tq)\right)2\nabla\eta_2\cdot\nabla\overline{\chi_{t,q}} \mathrm{d}z\\
&\quad+\int_{B_{\Gamma_{t,q}}(0)}\left(\phi_t(\Lambda_{t,q}^{-1}z+tq)-\phi_t(t{R_0}\vec{e}+tq)\right) \eta_2(z)\Delta\overline{\chi_{t,q}} \mathrm{d}z\\
&\quad+\Big(\phi_t (t{R_0}\vec{e}+tq)-\frac{\int_Mh e^{U_{t,q} -G_t^{(2)} }\phi_t \mathrm{d}v_g}{\int_Mh e^{U_{t,q} -G_t^{(2)} }\mathrm{d}v_g}\Big)
\int_{B_{\Gamma_{t,q}}(0)}\Big(2\nabla\eta_2\cdot\nabla\overline{\chi_{t,q}}+\eta_2(\Delta\overline{\chi_{t,q}})\Big)\mathrm{d}z.
\end{aligned}
\end{equation}
From the definition of $\overline{\chi_{t,q}}$ and integration by parts, we see that
\begin{equation*}
\begin{aligned}
\int_{B_{\Gamma_{t,q}}(0)}\left(2\nabla\eta_2\cdot\nabla\overline{\chi_{t,q}}+\eta_2(\Delta\overline{\chi_{t,q}})\right)
=~&\int_{B_{\Gamma_{t,q}}(0)\setminus B_{\frac{\Gamma_{t,q}}{2}}(0)}
\left(2\nabla\eta_2\cdot\nabla\overline{\chi_{t,q}}+\eta_2(\Delta\overline{\chi_{t,q}})\right)\\
=~&\int_{B_{\Gamma_{t,q}}(0)\setminus B_{\frac{\Gamma_{t,q}}{2}}(0) }\nabla\eta_2\cdot\nabla\overline{\chi_{t,q}}=O(1),
\end{aligned}
\end{equation*}
where we used $|\nabla \eta_2(z)|\le O(1)\Big(\frac{1}{1+|z|}\Big)$ and $|\nabla\overline{\chi_{t,q}}|=O(t)$.
We also note that $|\eta_2(z)|\le O(1)( \ln(1+|z|))$. Combined with $|\nabla^2\overline{\chi_{t,q}}|=O(t^2)$,  we get that
\begin{equation}
\label{integration_by_parts2}
\begin{aligned}
&\int_{B_{\Gamma_{t,q}}(0)\setminus B_{\frac{\Gamma_{t,q}}{2}}(0) }|\eta_2(z)\Delta\overline{\chi_{t,q}}|+2|\nabla\eta_2\cdot\nabla\overline{\chi_{t,q}}| \mathrm{d}z=O(|\ln t|).
\end{aligned}
\end{equation}
From Lemma \ref{improved_int_limit_of_psi_t}, we have $\int_{B_{\Gamma_{t,q}}(0)} \Delta\psi_t(z) \mathrm{d}z=o(|\ln t|^{-1})$.
Together with Lemma \ref{diff_of_phi_t}-(i), we get for $x\in B_{t{R_0}} (tq)\setminus B_{\frac{t{R_0}}{2}}(tq)$,
\begin{equation}
\begin{aligned}
\label{small_o_log}
&|\phi_t(x)-\phi_t(tq+t{R_0}\vec{e})|\\
&\le O(1)\Bigg(\left|\left(\ln\frac{|t{R_0}\vec{e}|}{|x-tq|}\right)\int_{B_{\Gamma_{t,q}}(0)}\Delta\psi_t(z) \right|+t^{\frac{2(p-1)}{p}}|\ln t|+O(\lambda_{t,q}e^{-\frac{\alpha\lambda_{t,q}}{4}})\Bigg)\\
&=o(|\ln t|^{-1}).
\end{aligned}
\end{equation}
From \eqref{int_psi_with_kernal}-\eqref{small_o_log} and Lemma \ref{vanish_outside}, we obtain as $t\to0$,
\begin{equation*}
\begin{aligned}\int_{B_{\Gamma_{t,q}}(0)}\frac{16\psi_t(z)\overline{\chi_{t,q}}(z)(1-|z|^2)}{(1+|z|^2)^3}\mathrm{d}z=o(1).
\end{aligned}
\end{equation*}
By \eqref{bdd_of_psi_t}, we have $\|\psi_t\|_{L^{\infty} (B_{\Gamma_{t,q}}(0))}\le O(1)$. Since $\psi_t(z)\to d_0Y_0(z)$ in  $C^{0,\beta}_{\textrm{loc}}(\mathbb{R}^2 )$ from Lemma \ref{limitfunctionofpsi_t}, we can see that
\begin{equation*}
\begin{aligned}
0=\lim_{t\to0}\int_{B_{\Gamma_{t,q}}(0)}\frac{ \psi_t(z)\overline{\chi_{t,q}}(z)(1-|z|^2)}{(1+|z|^2)^3}\mathrm{d}z
=d_0\int_{\RN}\frac{Y_0(z)^2}{(1+|z|^2)^2}\mathrm{d}z,
\end{aligned}
\end{equation*}
which implies $d_0=0$ and $\psi_t(z)\to 0$ in  $C^{0,\beta}_{\textrm{loc}}(\mathbb{R}^2 )$. Using Lemma \ref{vanish_outside}, we also get that  $\overline{\phi_t}(z)= \psi_t(z)+\frac{\int_Mh e^{U_{t,q} -G_t^{(2)} }\phi_t \mathrm{d}v_g}{\int_Mh e^{U_{t,q} -G_t^{(2)} }\mathrm{d}v_g}\to 0$ in  $C^{0,\beta}_{\textrm{loc}}(\mathbb{R}^2 )$ Thus, Lemma \ref{vanish_insidew}-(i) holds.
\medskip

\noindent (ii). We can easily get Lemma \ref{vanish_insidew}-(ii) from $\overline{\phi_t}(z)=\phi_t(\Lambda_{t,q}^{-1}z+tq )\to0$ in  $C^{0,\beta}_{\textrm{loc}}(\mathbb{R}^2 )$.
\medskip

\noindent (iii). We shall prove $\lim_{t\to0}\|\phi_t\|_{L^{\infty}(M)}=0$ by contradiction. Suppose  that
$$\|\phi_t\|_{L^{\infty}(M)}= |\phi_t(x_t) |\ge c_0\ \ \textrm{for small}\ \ t>0,$$
where $c_0>0$ is a constant  independent of $t>0$. Then by Lemma \ref{vanish_outside}-(ii) and Lemma \ref{vanish_insidew}-(ii), we have
\begin{equation}
\label{location}
|x_t-tq|\le \frac{tR_0}{2}, \quad \lim_{t\to0}\frac{|x_t-tq|}{t^2}=+\infty.
\end{equation}
By Lemma \ref{diff_of_phi_t}-(ii),  we  have
\begin{equation}
\begin{aligned}
\label{inside_smallarea}
&|\phi_t(x_t)-\phi_t(tq)| \\&\le O(1)\Big(
\Big|\int_{B_{\Gamma_{t,q}}(0)} \ln|z| \Delta\psi_t(z)\mathrm{d}z\Big|+|\ln t|\Big|\int_{B_{\Gamma_{t,q}}(0)}\Delta\psi_t(z)\mathrm{d}z\Big|\Big)\\
&+O(1)\Big(t^{\frac{2(p-1)}{p}}|\ln t| +\Big(\frac{|x_t-tq|}{t^2}\Big)^{-\frac{\alpha}{2}}\ln \frac{|x_t-tq|}{t^2}\Big).
\end{aligned}
\end{equation}
From Lemma \ref{improved_int_limit_of_psi_t}, we have
\begin{equation}
\begin{aligned}
\label{inside_smallarea2}
\left|\int_{B_{\Gamma_{t,q}}(0)}\Delta\psi_t(z)\mathrm{d}z\right|=o(|\ln t|^{-1})
\end{aligned}
\end{equation}
Using equation \eqref{eq_for_psi} and  H\"{o}lder inequality, we get
\begin{equation*}
\begin{aligned}
\label{inside_smallarea3}
&\int_{B_{\Gamma_{t,q}}(0)}|\ln |z|||\Delta\psi_t(z)|\\
&\le O(1)\Big( \int_{B_{\Gamma_{t,q}}(0)}  \frac{O(\ln |  z| |\psi_t(z)|) }{(1+|z|^2)^2}\Big)
\\&+O(1)\Big(\int_{B_{\Gamma_{t,q}}(0)}|\ln|z||t^2e^{-\lambda_{t,q}} (|\overline{g_{t}}(z)|+\sum_{i=1}^2|c_{t,q,i}|| \overline{Z_{t,q,i}}(z)| )\Big)\\
&=O\left(\Big\|\frac{\psi_t(z)}{(1+|z|)^{1+\frac{\alpha}{2}}}\Big\|_{L^2(B_{\Gamma_{t,q}}(0))}+\|g_{t}\|_{Y_{\alpha,t,p}}+ \sum_{i=1}^2|c_{t,q,i}|\|Z_{t,q,i}\|_{Y_{\alpha,t,p}}\right).
\end{aligned}
\end{equation*}
Then we can apply Lemma \ref{vanish_insidew}-(i), Lemma \ref{magofcc}, and $\|g_{t}\|_{Y_{\alpha,t,p}}=o(|\ln  t|^{-1})$ to get
\begin{equation}
\begin{aligned}
\label{inside_smallarea4}
\int_{B_{\Gamma_{t,q}}(0)} |\ln |  z|||\Delta\psi_t(z)|\mathrm{d}z=o(1).
\end{aligned}
\end{equation}
Finally, using \eqref{location}-\eqref{inside_smallarea4} and Lemma \ref{vanish_insidew}-(ii), we obtain $\lim_{t\to0}|\phi_t(x_t)|=0$, which contracts to $|\phi_t(x_t) |\ge c_0$. Thus, we finish the whole proof.
\end{proof}

\medskip
\noindent
\textbf{Proof of Theorem \ref{thma}.} By  Lemma \ref{phitalphat} and  Lemma \ref{vanish_insidew}, we have
$$\lim_{t\to0}\left(\|\phi_t\|_{L^{\infty}(M)}+\|\phi_t\|_{X_{\alpha,t,p}}\right)=0,$$
which contradicts to the assumption \eqref{eqto1}. Therefore, we get the inequality \eqref{ineq} and it implies that $\mathbb{Q}_{t,q}\mathbb{L}_{t,q}$ is one-to-one from   $E_{\alpha,t,q,p}$ to $F_{\alpha,t,q,p}$.

To complete the proof of Theorem \ref{thma}, we follow the arguments in \cite{ly1}  to show that $\mathbb{Q}_{t,q}\mathbb{L}_{t,q}$ is onto from $E_{\alpha,t,q,p}$ to $ F_{\alpha,t,q,p}$. As in \cite{ly1}, we define $\widehat{\mathbb{L}}_{t,q}$ such that
$$\widehat{\mathbb{L}}_{t,q}(\phi):=\Delta\phi-\frac{8\Lambda_{t,q}^2\chi_{t,q}(x)}{(1+\Lambda_{t,q}^2|x-tq|^2)^2}\phi.$$
Then $\mathbb{Q}_{t,q}\widehat{\mathbb{L}}_{t,q}$ is an isomorphism from $E_{\alpha,t,q,p}$ to $F_{\alpha,t,q,p}$, and thus $\textrm{ind}(\mathbb{Q}_{t,q}\widehat{\mathbb{L}}_{t,q})=0.$ Moreover, we see that
\begin{equation*}
\mathbb{Q}_{t,q}\mathbb{L}_{t,q}\phi=\mathbb{Q}_{t,q}\widehat{\mathbb{L}}_{t,q}\phi+\mathbb{Q}_{t,q}(\mathbb{L}_{t,q}-\widehat{\mathbb{L}}_{t,q})\phi.
\end{equation*}
Since $\mathbb{Q}_{t,q}(\mathbb{L}_{t,q}-\widehat{\mathbb{L}}_{t,q})$ is a compact operator, we get
\[\textrm{dim}(\textrm{ker} (\mathbb{Q}_{t,q}\mathbb{L}_{t,q}))-\textrm{codim}(\textrm{ran}(\mathbb{Q}_{t,q}\mathbb{L}_{t,q}))
=\textrm{ind}(\mathbb{Q}_{t,q}\mathbb{L}_{t,q})
=\textrm{ind}(\mathbb{Q}_{t,q}\widehat{\mathbb{L}}_{t,q})=0.\]  By \eqref{ineq}, we have $\textrm{dim}(\textrm{ker} (\mathbb{Q}_{t,q}\mathbb{L}_{t,q}))=0$ and   $\textrm{codim}(\textrm{ran}(\mathbb{Q}_{t,q}\mathbb{L}_{t,q}))=0.$
As a consequence, we get $\mathbb{Q}_{t,q}\mathbb{L}_{t,q}$ is onto from $E_{\alpha,t,q,p}$ to $F_{\alpha,t,q,p}$ and it proves Theorem \ref{thma}.
$\hfill\square $

\section{Proof of Theorem \ref{theorem1.4}}\label{sec_proof}
In this section, we are going to  construct  a solution of \eqref{2.1} with the form
$$u_t =u_{t,q}^* -\int_Mu_{t,q}^*\mathrm{d}v_g+\phi_{t,q}=U_{t,q}+\phi_{t,q}.$$
Then we need to find  $\phi_{t,q}$ solving the following system:%
\begin{equation}
\label{33}
\mathbb{L}_{t,q}\left(\phi_{t,q}\right)
=g_{t,q}\left(\phi_{t,q}\right),
\end{equation}
where
\begin{equation*}
\begin{aligned}
g_{t,q}\left(\phi\right):=&-\Delta u_{t,q}^*+\rho-\frac{\rho he^{U_{t,q}-G_t^{(2)}+\phi}}{\int_Mh e^{U_{t,q} -G_t^{(2)} +\phi}\mathrm{d}v_g}\\
&+\frac{\rho he^{U_{t,q}-G_t^{(2)}}}{\int_Mh e^{U_{t,q} -G_t^{(2)} }\mathrm{d}v_g}\left(\phi-\frac{\int_Mh e^{U_{t,q} -G_t^{(2)} }\phi \mathrm{d}v_g}{\int_Mh e^{U_{t,q} -G_t^{(2)} }\mathrm{d}v_g}\right).
\end{aligned}
\end{equation*}
\medskip

\noindent Now we are going to prove Theorem \ref{theorem1.4} with the following steps.

\noindent\textit{Step 1}. Let $0<\alpha<\min\{\frac{1}{2},\frac{4(p-1)}{p}\}$.  We claim that there exists $t_1\in(0,t_0)$ such that if $0<t<t_1$ and  $q\in B_{t|\ln t|}(0)$, then  there is $%
\phi_{t,q} \in E_{\alpha,t,q,p}$ satisfying
\begin{equation}
\left\{\begin{array}{l}
\label{eq_for_existence_of_eta}
\mathbb{Q}_{t,q}\mathbb{L}_{t,q}(\phi_{t,q})=\mathbb{Q}_{t,q}(g_{t,q}(\phi_{t,q})),\\
\Vert\phi_{t,q}\Vert_{L^{\infty}(M)}+\|\phi_{t,q}\|_{X_{\alpha,t,p}}\le   t^{\frac{2}{p}}|\ln t|^2.
\end{array}\right.
\end{equation}
Using Theorem \ref{thma}, we can write \eqref{eq_for_existence_of_eta} as
\begin{equation*}
\begin{aligned}
\phi_{t,q}&=B_{t,q}(\phi_{t,q}), \ \ \textrm{where} \ \ B_{t,q}(\phi_{t,q}):=(\mathbb{Q}_{t,q}\mathbb{L}_{t,q})^{-1}(\mathbb{Q}_{t,q}g_{t,q}(\phi_{t,q})).
\end{aligned}
\end{equation*}
From Theorem \ref{thma} and Lemma \ref{pronorm}, we have
\begin{equation}
\begin{aligned}
\label{bnorm}
\|B_{t,q}(\phi_{t,q})\|_{L^{\infty}(M)}+ \Big\|B_{t,q}(\phi_{t,q})\|_{X_{\alpha,t,p}} \leq
C|\ln t|\|g_{t,q}(\phi_{t,q})\|_{Y_{\alpha,t,p}}
\end{aligned}
\end{equation}
for some generic constant $C$. Let
\begin{equation}
\begin{aligned}
&S_{{t}}:=\Big\{ \phi  \in  E_{\alpha,t,q,p}\ \Big|\Vert \phi\Vert_{L^{\infty}(M)}+\|\phi\|_{X_{\alpha,t,p}}\le  t^{\frac{2}{p}}|\ln t|^2\Big\}.
\end{aligned}
\end{equation}
To prove the  existence of the solution  for  \eqref{eq_for_existence_of_eta}, we will show that $B_{t,q}$ is a contraction map from $S_{{t}}$ to $S_{{t}}$ provided $t>0$ is sufficiently small and  $q\in B_{t|\ln t|}(0)$. By Lemma \ref{basic_est_for_app}, we have for $x\in M\setminus B_{t{R_0}}(tq)$,
\begin{equation}\label{g1}
\begin{aligned}
&g_{t,q}(\phi )(x)\\
&=8\pi\theta_{t,q}+(\rho-8\pi)\frac{he^{w}}{\int_Mhe^{w}\mathrm{d}v_g}-\rho\frac{h(x)e^{U_{t,q}(x)-G_t^{(2)}(x)}}{\int_Mh e^{U_{t,q} -G_t^{(2)} }\mathrm{d}v_g }+O(1)(\|\phi\|^2_{L^{\infty}(M)})\\
&= O(1)\left(1_{B_{\mathbf{r}_0}(0)}(x)\Big(\frac{t|q|}{|x-tq|}+ \frac{t^2}{|x-tq|^2}\Big)+t^2|\ln t|+t|q|+\|\phi\|^2_{L^{\infty}(M)}\right).
\end{aligned}
\end{equation}
Using Lemma \ref{basic_est_for_app} again, we see that  for $x\in B_{t{R_0}}(tq)$,
\begin{equation*}
\begin{aligned}
g_{t,q}(\phi)(x)
&=8\pi\theta_{t,q}+(\rho-8\pi)\frac{he^{w}}{\int_Mhe^{w}\mathrm{d}v_g}+\frac{8\Lambda_{t,q}^2}{(1+\Lambda_{t,q}^2|x-tq|^2)^2}\\
&\quad-\frac{e^{\lambda_{t,q}}t^{-2}\rho H_{t,q}\left(\frac{x}{t}\right)(1+O(\|\phi\|^2_{L^{\infty}(M)}+t^2))}{(1+\Lambda_{t,q}^2|x-tq|^2)^2(1+ \mathfrak{A}_{t,q})},
\end{aligned}
\end{equation*}
which implies for $z\in B_{\Gamma_{t,q}}(0)$,
\begin{equation}
\label{g2}
\begin{aligned}
&t^2e^{-\lambda_{t,q}}\overline{ g_{t,q}(\phi )}(z)\\
&=t^2e^{-\lambda_{t,q}} g_{t,q}(\phi )(\Lambda_{t,q}^{-1}z+tq )\\&=
\frac{ \rho H_{t,q}(q)}  {(1+|z|^2)^2} -\frac{\rho H_{t,q}(\Lambda_{t,q}^{-1}t^{-1}z+q) (1+O(1)(\|\phi\|^2_{L^{\infty}(M)}+t^2))}{(1+|z|^2)^2(1+ \mathfrak{A}_{t,q})}+O(t^4)\\
&=\frac{ \rho H_{t,q}(q)}  {(1+|z|^2)^2}\Big(1-\frac{1}{(1+\mathfrak{A}_{t,q})}\Big)
-\frac{\rho \nabla H_{t,q}(q)\cdot(\Lambda_{t,q}^{-1}t^{-1}z)}{(1+|z|^2)^2(1+ \mathfrak{A}_{t,q})}\\
&\quad+\frac{O(e^{- \lambda_{t,q}}|z|^2+ \|\phi\|^2_{L^{\infty}(M)}+t^2)}{(1+|z|^2)^2} +O(t^4),
\end{aligned}
\end{equation}
here we used the Taylor expansion of $ H_{t,q}(C_{t,q}e^{-\frac{\lambda_{t,q}}{2}}z+q)$.

Recall $\mathfrak{A}_{t,q}=O(1)(t|q|+t^2|\ln t|)$. We also note that $\nabla H_{t,q}(q)=O(1)(t+|q|)$. Then $|q|\le t|\ln t|$ and  \eqref{g1}-\eqref{g2} yield
\[\|g_{t,q}(\phi )\|_{Y_{\alpha,t,p}}=O(1)(t^{\frac{2}{p}}+\|\phi\|^2_{L^\infty(M)}).\]
Combined with \eqref{bnorm}, we see that
$B_{t,q}: S_{{t}}\longrightarrow S_{{t}}$ provided $t$ is sufficiently small. Following a similar argument, we also get that if $\phi_1$, $\phi_2\in S_t$, then
\begin{equation*}
\|g_{t,q}(\phi_1 )-g_{t,q}(\phi_2)\|_{Y_{\alpha,t,p}}=
O(1)\left( (\|\phi_1\|_{L^\infty(M)}+\|\phi_2\|_{L^\infty(M)})\|\phi_1-\phi_2\|_{L^\infty(M)}\right).
\end{equation*}
Together with \eqref{bnorm}, we see that if $t>0$ is small, then
\begin{equation*}
\begin{aligned}
&\|B_{t,q}(\phi_1 )-B_{t,q}(\phi_2 )\|_{L^{\infty}(M)}+\|B_{t,q}(\phi_1 )-B_{t,q}(\phi_2 )\|_{X_{\alpha,t,p}}\\&\le
\frac{1}{2}\Big(\| \phi_1 -\phi_2 \|_{L^{\infty}(M)}+\| \phi_1 - \phi_2  \|_{X_{\alpha,t,p}}
\Big)\ \ \textrm{for any}\ \ \phi_1, \phi_2\in S_t.
\end{aligned}
\end{equation*}
Therefore, we have $B_{t,q}$ is a contraction map from $S_t$ to $S_t$ for small $t>0$. As a consequence, we get the existence of $\phi_{t,q} \in E_{\alpha,t,q,p}$ satisfying \eqref{eq_for_existence_of_eta}.
\medskip

\noindent\textit{Step 2}. In \textit{Step 1}, we have proved that for some $t_1\in(0,t_0)$ such that if $t\in(0,t_1)$ and $q\in B_{t|\ln t|}(0)$, then there exist
$$(\phi_{t,q},c_{t,q,1},c_{t,q,2})\in E_{\alpha,t,q,p}\times\mathbb{R}\times\mathbb{R}$$
satisfying \eqref{c} and \eqref{intcfind} with $g_{t}(x)$ is replaced by $g_{t,q}(\phi_{t,q})$. For convenience, we write $g_{t,q}(\phi_{t,q})$ by $g_{t,q}$. To complete the proof of Theorem \ref{theorem1.4}, it is enough to find $q\in\mathbb{R}^2$
satisfying $c_{t,q,i}\equiv0$ for $i=1,2$. We denote
\[\psi_{t,q}(z)=\overline{\phi}_{t,q}(z)-\frac{\int_Mh e^{U_{t,q} -G_t^{(2)} }\phi_{t,q} \mathrm{d}v_g}{\int_Mh e^{U_{t,q} -G_t^{(2)} }\mathrm{d}v_g}.\]
Then  $\psi_{t,q}$ satisfies \eqref{eq_for_psi}  with $\overline{g_{t}}(z)$ replaced by $\overline{g_{t,q}}(z)$. As in the proof of Lemma \ref{magofcc}, we apply Lemma \ref{ztqi} and the integration by parts to get
\begin{equation}
\label{f0}
\begin{aligned}
&c_{t,q,j}\int_{B_{\Gamma_{t,q}}(0)} |\nabla \overline{\chi_{t,q}}(z)Y_j(z)|^2+\frac{8(\overline{\chi_{t,q}}(z))^3(Y_j(z))^2}{(1+|z|^2)^2}\mathrm{d}z
\\&=\int_{B_{\Gamma_{t,q}}(0)}\psi_{t,q}(z)
\Big\{\Delta\overline{\chi_{t,q}}(z)\frac{z_j}{(1+|z|^2)}+2\nabla\overline{\chi_{t,q}}(z)\cdot\nabla \Big(\frac{z_j}{(1+|z|^2)}\Big) \Big\}\mathrm{d}z
\\&\quad+\int_{B_{\Gamma_{t,q}}(0)}\psi_{t,q}(z)\Big\{\overline{\chi_{t,q}}(z)\Delta\Big(\frac{z_j}{(1+|z|^2)}\Big) +\frac{8\overline{\chi_{t,q}}(z)z_j}{(1+|z|^2)^3} \Big\}\mathrm{d}z
\\&\quad+\int_{B_{\Gamma_{t,q}}(0)}\frac{O(1)(|\psi_{t,q}(z)Y_j(z)|  (e^{-\frac{\lambda_{t,q}}{2}}|z|+t|q|+t^2|\ln t|))}{(1+|z|^2)^2}\mathrm{d}z
\\&\quad-\int_{B_{\Gamma_{t,q}}(0)}\Lambda_{t,q}^{-2}\overline{g_{t,q}}(z) \overline{\chi_{t,q}}(z)\frac{z_j}{(1+|z|^2)}\mathrm{d}z
\\&=\mathfrak{N}_{t,q,1}+\mathfrak{N}_{t,q,2}+\mathfrak{N}_{t,q,3}+\mathfrak{N}_{t,q,4}.
\end{aligned}
\end{equation}
Next, we shall compute the right hand side of \eqref{f0} term by term.
\medskip

\noindent (i) By $\|\psi_{t,q}\|_{L^\infty(B_{\Gamma_{t,q}}(0))}=O(1)( \|\phi_{t,q}\|_{L^{\infty}(M)}),$ we have $\mathfrak{N}_{t,q,3}=O(1)(t\|\phi_{t,q}\|_{L^{\infty}(M)}).$

\noindent (ii) Using $|\nabla \overline{\chi_{t,q}}|=O(t)$ and $|\nabla^2 \overline{\chi_{t,q}}|=O(t^2)$, we get $\mathfrak{N}_{t,q,1}=O(1)(t\|\phi_{t,q}\|_{L^{\infty}(M)}).$

\noindent (iii) We can easily get $\mathfrak{N}_{t,q,2}=0$ from $\Delta\Big(\frac{z_j}{(1+|z|^2)}\Big) +\frac{8z_j}{(1+|z|^2)^3}=0$.

\noindent (iv) By using \eqref{g2} and $\overline{\chi_{t,q}}(z)=\overline{\chi_{t,q}}(|z|)$, we have
\begin{equation*}
\begin{aligned}
\mathfrak{N}_{t,q,4}=-&\int_{B_{\Gamma_{t,q}}(0)}\Lambda_{t,q}^{-2}\overline{ g_{t,q} }(z)\overline{\chi_{t,q}}(z)\frac{z_j}{(1+|z|^2)}\mathrm{d}z\\
=&~-C_{t,q}^2\int_{B_{\Gamma_{t,q}}(0)}\Big\{\frac{ \rho H_{t,q}(q)}{(1+|z|^2)^2}\Big(1-\frac{1}{(1+ \mathfrak{A}_{t,q})}\Big)
-\frac{\rho \nabla H_{t,q}(q)\cdot(t^{-1}\Lambda_{t,q}^{-1}z)}{(1+|z|^2)^2(1+ \mathfrak{A}_{t,q})}\\
&+\frac{O(e^{-\lambda_{t,q}}|z|^2+\|\phi_{t,q}\|^2_{L^{\infty}(M)}+t^2)}{(1+|z|^2)^2}+O(t^4)\Big\}\overline{\chi_{t,q}}(|z|)\frac{z_j}{(1+|z|^2)}\mathrm{d}z\\
=&C_{t,q}^2\int_{B_{\Gamma_{t,q}}(0)}\frac{\rho \nabla_j H_{t,q}(q)C_{t,q}e^{-\frac{\lambda_{t,q}}{2}}z_j^2 \overline{\chi_{t,q}}(|z|)   }{(1+|z|^2)^3(1+ \mathfrak{A}_{t,q})}\mathrm{d}z+O(\|\phi_{t,q}\|^2_{L^{\infty}(M)}+t^2).
\end{aligned}
\end{equation*}
Fix $p\in(1,2)$.  From \eqref{f0}, (i)-(iv), and $\|\phi_{t,q}\|_{L^{\infty}(M)}\le t^{\frac{2}{p}}|\ln t|^2$, we get
\begin{equation*}
\begin{aligned}
&c_{t,q,j}\int_{B_{\Gamma_{t,q}}(0)} |\nabla \overline{\chi_{t,q}}(z)Y_j(z)|^2+\frac{8(\overline{\chi_{t,q}}(z))^3(Y_j(z))^2}{(1+|z|^2)^2}\mathrm{d}z\\
&= \frac{\nabla_j H_{t,q}(q)}{H_{t,q}(q)}e^{-\frac{\lambda_{t,q}}{2}}\int_{B_{\Gamma_{t,q}}(0)} \frac{8C_{t,q}z_j^2\overline{\chi_{t,q}}(z) }{(1+|z|^2)^3(1+ \mathfrak{A}_{t,q})}  \mathrm{d}z\\
&\quad+O(1)(t\|\phi_{t,q}\|_{L^{\infty}(M)}+\|\phi_{t,q}\|^2_{L^{\infty}(M)}+t^2)\\
&=\frac{\nabla_j H_{t,q}(q)}{H_{t,q}(q)}e^{-\frac{\lambda_{t,q}}{2}}\int_{B_{\Gamma_{t,q}}(0)} \frac{8   C_{t,q}z_j^2\overline{\chi_{t,q}}(z) }{(1+|z|^2)^3(1+ \mathfrak{A}_{t,q})} \mathrm{d}z+\mathfrak{O}_{t,q,j},
\end{aligned}
\end{equation*}where $\mathfrak{O}_{t,q,j}=O(t^2)$.
We note that $\nabla H_{t,q}(0)=O(t)$ and  $\nabla^2 H_{t,q}(0)$ is invertible for small $t>0$ and $q\in B_{t|\ln t|}(0)$. Therefore, for small $t>0$, there is $q_t=O(t)$ satisfying \[\frac{\nabla_j H_{t,q_t}(q_t)}{H_{t,q_t}(q_t)}e^{-\frac{\lambda_{t,q_t}}{2}}\int_{B_{\Gamma_{t,q_t}}(0)} \frac{8   C_{t,q_t}z_j^2\overline{\chi_{t,q_t}}(z) }{(1+|z|^2)^3(1+ \mathfrak{A}_{t,q_t})} \mathrm{d}z+\mathfrak{O}_{t,q_t,j}=0 \ \textrm{for}\ \ j=1,2,\] and thus $c_{t,q_t,j}=0$ for $j=1,2$. Now we complete the proof of Theorem \ref{theorem1.4}.\hfill$\square$

\end{document}